\DeclareMathOperator{\Aut}{Aut}
\DeclareMathOperator{\Bin}{Bin}
\DeclareMathOperator{\op}{op}
\DeclareMathOperator{\Rad}{Rad}
\DeclareMathOperator{\Var}{Var}
\newcommand{\hind}{\mathbb{I}}
\newcommand{\MP}{F_{H,G}}
\newcommand{\MPt}{F_{\triangle,G}}
\newcommand{\MPm}{F_{H,G,m}}
\newcommand{\infl}{\mathrm{inf}}
\newcommand{\eps}{\varepsilon}
\newcommand{\EE}{\mathbb{E}}
\newcommand{\NN}{\mathbb{N}}
\newcommand{\PP}{\mathbb{P}}
\newcommand{\RR}{\mathbb{R}}
\newcommand{\mcC}{\mathcal{C}}
\newcommand{\mcG}{\mathcal{G}}
\newcommand{\mcI}{\mathcal{I}}
\newcommand{\mcN}{\mathscr{N}}
\newcommand{\mcT}{\mathcal{T}}
\newcommand{\mcZ}{\mathcal{Z}}
\newcommand{\mfN}{\mathcal{N}}
\newcommand{\ind}{\mathbf{1}}
\newcommand{\w}{\mathbf{w}}
\newcommand{\rb}{\mathbf{w}}
\newcommand{\s}{\mathbf{s}}
\newcommand{\bz}{\mathbf{z}}
\newcommand{\bs}{\mathbf{s}}
\newcommand{\gj}{\mathcal{G}_H}
\newcommand{\nt}{Z_{\triangle}}
\newcommand\thankssymb[1]{\textsuperscript{\@fnsymbol{#1}}}
\newenvironment{block}%
{\list{}{\leftmargin=0.0in\rightmargin=0.0in}  \item[]  }%
{\endlist}
\theoremstyle{plain}
\newtheorem{thm}{Theorem}[section]
\newtheorem{lemma}[thm]{Lemma}
\newtheorem{cor}[thm]{Corollary}
\newtheorem{conj}[thm]{Conjecture}
\newtheorem{prop}[thm]{Proposition}
\newtheorem{qn}[thm]{Question}
\newtheorem{obs}[thm]{Observation}
\theoremstyle{definition}
\newtheorem{defn}[thm]{Definition}
\theoremstyle{remark}
\newtheorem*{rmk}{Remark}
\newtheorem*{ex}{Example}
\numberwithin{equation}{section}
\numberwithin{thm}{section}
\title{Characterizing the fourth-moment phenomenon of monochromatic subgraph counts via influences}
\author[Mani]{Nitya Mani}
\author[Mikulincer]{Dan Mikulincer}
\address{Mani \& Mikulincer: Department of Mathematics, MIT}
\email{\{nmani,danmiku\}@mit.edu}
\begin{document}
	\maketitle
	\begin{abstract}
	    We investigate the distribution of monochromatic subgraph counts in random vertex $2$-colorings of large graphs. We give sufficient conditions for the asymptotic normality of these counts and demonstrate their essential necessity (particularly for monochromatic triangles). Our approach refines the fourth-moment theorem to establish new, local \textit{influence-based} conditions for asymptotic normality; these findings more generally provide insight into fourth-moment phenomena for a broader class of Rademacher and Gaussian polynomials.
	\end{abstract}
	\section{Introduction}
The study of local properties of large-scale graphs plays a pivotal role in computer science, statistics, and probability. In some cases, such local properties allow us to infer information about the global structure of the associated graph \cite{linial1993local}. In other cases, restricting to neighborhoods can reduce the complexity of statistical or computational tasks over the entire graphs~\cite{bdm22}. In this work, we focus on the task of subgraph counting and study the distribution of monochromatic subgraphs for random vertex colorings. 
 
	Consider a sequence of simple undirected graphs $G_n= (V(G_n), E(G_n))$, with vertex sets $V(G_n) = \{1, 2, \ldots, v(G_n)\}$ and edge sets $E(G_n)$. Further, let $H = (V(H), E(H))$ be some connected reference graph. Our object of study is the \emph{monochromatic subgraph count} statistic $T(H, G_n)$; the number of monochromatic copies of $H$ in $G_n$ with a random vertex coloring. That is, for some $c \in \NN$, independently color each vertex $v \in V(G_n)$ uniformly at random, using $c$ colors. We say that a copy of $H$ in $G_n$ is \textit{monochromatic} if all of its vertices have the same color. The statistic $T(H, G_n)$ is a sum of simple indicator random variables, and, depending on the graph structure, these indicators may appear independent, leading to a central limit theorem. We aim to make this intuition precise and establish simple (largely necessary and sufficient) conditions on the underlying graph sequence $G_n$ that imply the asymptotic normality of the statistic $T(H, G_n)$. (Other settings, where the limiting distribution is typically not Gaussian, e.g. the Poisson regime or dense $G_n$ regime, have also been studied~\cite{BHA19,bmm20,BHJ92,BHA17,JAEP}).

	As a motivating example, consider the case where $H = K_2$, i.e. $H$ is a single edge. For a random coloring of $V(G_n)$, $T(K_2,G_n)$ counts the number of monochromatic edges in $G_n$. If we envision $G_n$ as describing say, a social network, then $T(K_2, G_n)$ counts the number of pairs of friends in $G_n$ that have the same color, thus generalizing the classical birthday paradox. Such generalizations were previously studied in a variety of works~\cite{BHJ92,AGK16,CF06, D05,DM89,KMPT10,GH12,xiao2015universal}.
	The asymptotic distribution of $T(K_2,G_n)$ exhibits various universality phenomena. In particular, using the method of moments, Bhattacharya, Diaconis, and Mukherjee~\cite{BHA17} concluded that $T(K_2,G_n)$ is asymptotically normal whenever $e(G_n)\to \infty$ and the fourth moment of a suitably normalized version of $T(K_2,G_n)$ converges to $3$ (the fourth moment of the normal distribution).

	The principle that a central limit theorem for non-linear functions of random fields can arise as a consequence of convergence of the corresponding sequence of fourth moments arises in a variety of problems. The celebrated \textit{fourth-moment phenomenon} was first observed for Wiener-It\^o stochastic integrals in~\cite{NP05}, and~\cite{NP09} later proved a quantitative version. In subsequent years, the fourth-moment approach has appeared as a governing principle in the field of normal approximations. In particular, it has proven widely influential in proving central limit theorems in the Wiener space\footnote{The book~\cite{NP12} provides a survey on the topic, and the website \url{https://sites.google.com/site/malliavinstein/home} holds an up-to-date list of related results.}.
	
	Returning to the problem of monochromatic subgraph counts, beyond edges, the fourth-moment phenomenon was observed for monochromatic triangles~\cite{BHA22} when 
 $c \geq 5$, and for general monochromatic subgraphs $H$ when $c\geq 30$~\cite{DHM22}. Thus, when $c \geq 30$, in order for $T(H,G_n)$ to be asymptotically normal, it is necessary and sufficient that $\EE\left[T(H,G_n)^4\right]$ converges to an appropriate quantity. This idea has continued to be generalized in~\cite{JAEC}.
	
	It is tempting to expect that sufficiency of the fourth-moment condition continues to hold with a small number of colors. However, in the aforementioned results~\cite{BHA22,DHM22}, some lower bound on the number of colors turns out to be \textit{necessary} for the validity of the fourth-moment phenomenon.  The necessity can be seen in~\cite[Section 4.2]{BHA22} which provides examples of families of graphs $(G_n)_{n \in \NN}$, where the fourth moment of a suitably normalized version of $T(H, G_n)$ (using $2$ colors) converges to $3$, but $T(H, G_n)$ can be readily seen to not be asymptotically normal. These examples raise the following natural question:
	
	\begin{block}
		\centering
		\emph{Suppose that the vertices of $G_n$ are colored with a small number of colors. Can one find a necessary and sufficient condition on $G_n$ to ensure the asymptotic normality of $T(H,G_n)$?}
	\end{block}
	It is precisely this question that we tackle, focusing on $c = 2$, the smallest non-trivial number of colors.
	At the technical level, we refine and leverage the machinery of the fourth-moment theorem and adapt it to polynomial functionals of Bernoulli variables. This refinement reveals a new type of easy-to-verify \textit{local} properties of $G_n$. Combining these properties with established \emph{global} moment considerations allows us to distill the above-mentioned \textit{failures} of the fourth-moment phenomenon for $T(H, G_n)$. Our main contributions are as follows.
	\begin{enumerate}
		\item For triangles ($H = \triangle$), we completely characterize the asymptotic normality of $T(\triangle, G_n)$. We establish a new \emph{influence} condition that together with the fourth-moment condition determines whether $T(\triangle, G_n)$ is approximately normal. Our new condition is easy to check, is combinatorial in nature, and is inspired by the notion of influences from Boolean analysis. Informally, it requires that a single edge cannot appear in too many different triangles. 
		\item In the general setting, when $H$ is not a triangle, we show that the influence condition remains sufficient and conjecture that it is also necessary. We provide some evidence for this conjecture by relaxing the influence condition.
		\item We also prove new quantitative convergence rates for the normalized version of $T(H,G_n)$. When the influences of the graph are small, our rates turn out to be faster than the known results in the literature. Our result is rather general and applies to other related problems, such as monochromatic hypergraph counts and edge colorings. 
		\item Finally, as a technical tool that may be of independent interest, we extend the fourth-moment phenomenon to a broader class of Rademacher and Gaussian polynomials, which do not necessarily belong to a single chaos.
	\end{enumerate}

	\subsection{Setup and Main Results}
	Given a subgraph $H$ and a sequence of graphs $\{G_n\}_{n\in\NN}$, we shall henceforth use $T(H,G_n)$ to denote the number of monochromatic copies of $H$ in $G_n$ when we $2$-color the vertices $v \in V(G_n)$, uniformly at random, with independent colors $X_v \sim \Rad(\frac12)$\footnote{Here $\Rad(\frac12)$ stands for the Rademacher distribution, i.e. the uniform distribution on $\{-1,1\}$.}.  More formally, we have:
	\begin{align}\label{d:thgn}
		T(H, G_n) := \frac{1}{|\Aut(H)|} \sum_{\bs \in V(G_n)_{V(H)}} \ind\{X_{=\bs}\} \underbrace{\prod_{(i, j) \in E(H)} a_{s_i s_j}(G_n)}_{=: a_{H, \bs}(G_n)},
	\end{align}
	where
	\begin{itemize}
		\setlength\itemsep{0.3em}
		\item[--] $V(G_n)_r := \{ \bs = (s_1, \ldots, s_{r}) \in V(G_n)^r : s_i \text{ distinct} \}$.
		\item[--] $\Aut(H)$ is the automorphism group of $H$, the group of vertex permutations $\sigma$ such that $(i, j) \in E(H)$ if and only if $(\sigma(i), \sigma(j)) \in E(H)$.
		\item[--] We let $A(G_n) = (a_{ij}(G_n))_{i, j \in V(G_n)}$ be the adjacency matrix of $G_n$, so that $a_{ij} = \ind\{(i, j) \in E(G_n)\}$
		\item[--] $\ind\{X_{=\bs}\}:=\ind\{X_{s_1} = \cdots = X_{s_{r}}\}$ is the indicator variable that all of the vertices of $\bs\in V(G_n)_r$ have the same color.
	\end{itemize}
	Throughout, we assume $H$ is simple and connected and that $v(H) = r\ge 2$. We denote the centered and rescaled version of $T(H,G_n)$ by
	\begin{align}\label{d:zhgn}
		Z(H, G_n) := \frac{T(H, G_n) - \EE[T(H, G_n)]}{\sigma_H(G_n)},\quad \sigma_H(G_n) := \sqrt{\Var[T(H, G_n)]}.
	\end{align}
	We also let
	\begin{align}\label{d:nhgn}
		N(H,G_n):=\frac1{|\Aut(H)|}\sum_{s\in V(G_n)_{r}}\prod_{(i,j)\in E(H)} a_{s_is_j}(G_n),
	\end{align}
	be the number of copies of $H$ in $G_n.$
	
	The following definition will be instrumental in characterizing the asymptotic distribution of $Z(H,G_n)$.
	\begin{defn}[Influential vertices and edges] \label{def:inf}
		Given an integer $k \in [r]$ and $\rb=(w_1,\ldots,w_k) \in V(G_n)_k$ (ordered) let 
		$D_{\rb}(H, G_n)$ be the number of copies of $H$ in $G_n$ that include vertices $w_1,\ldots,w_k$.
		More precisely,
		\begin{align}\label{d:dw}
			D_{\rb}(H, G_n):=\frac1{\Aut(H)}\sum_{\s\in V(G_n)_r  :  \bs \supset \rb}  a_{H,\s}(G_n).
		\end{align}
		We say that vertex $w \in V(G_n)$ is an \textit{$\eps$-influential vertex} if
		$D_w(H, G_n) \ge \eps \sigma_H(G_n)$. %N(H, G_n)$.
		We say that a pair of vertices $\rb = (w_1, w_2) \in V(G_n)_2$ is \textit{$\eps$-influential pair} if $D_{\rb}(H, G_n) \ge \eps \sigma_H(G_n)$. If $e = \{w_1,w_2\} \in E(G_n)$, we will sometimes also call $\rb$ an \textit{$\eps$-influential edge} and denote its influence as $D_{e}(H, G_n)$.
	\end{defn}
	When $\eps$ can be taken to a be constant in $n$, we typically omit the ``$\eps$'' and simply refer to collections of vertices as \textit{influential}. As we shall show in \cref{lem:countinginfluence},  \cref{def:inf} is intimately related to the notion of influence from Boolean analysis~\cite[Chapter 2.2]{odonnell2014analysis}. This notion also bears similarity to the \textit{local counts} used in~\cite{bdm22} and a similar notion studied in~\cite{nourdin2010invariance} (discussed further in~\cref{sec:related}).\\

    \paragraph{\bf Quantitative central limit theorems under a low-influence condition.}
	Our main idea is to compare $Z(H,G_n)$ (which turns out to be a polynomial in Rademacher variables) to a polynomial in Gaussian variables. In some sense, we replace the $2$-coloring of $G_n$ with a continuous Gaussian ``coloring''. This comparison, facilitated by the \emph{Invariance Principle} of~\cite{mossel2010noise}, allows us to invoke known results about central limit theorems for polynomials in Gaussian spaces. In particular, we shall rely on the fourth-moment theorem of Nualart and Peccati~\cite{NP05}.
	
	The first hurdle we encounter is that known results in Gaussian spaces, such as the ones in \cite{NP05}, mostly revolve around polynomials belonging to a Wiener chaos, i.e. Hermite polynomials. Since $Z(H,G_n)$ is not a Hermite polynomial in general, we begin by extending the fourth-moment theorem to certain families of multi-linear polynomials that do not necessarily belong to a single Wiener chaos. For polynomials belonging to this family, we generalize the quantitative version of the fourth moment theorem given in~\cite{salim2011gaussian} to prove~\cref{thm:fourthmoment}, giving a necessary condition on such polynomials to ensure that the fourth-moment theorem holds.
	
	Among other examples,~\cref{thm:fourthmoment} holds for multi-linear polynomials with positive coefficients. Since $Z(H,G_n)$ is such a polynomial, this allows us to obtain our main quantitative convergence theorem, \cref{thm:mainquant}, which bounds the $W_1$ and $d_2$ distances (defined in~\cref{eq:wassdef,eq:dtowass}) between $Z(H, G)$ and a standard Gaussian in terms of the fourth-moment discrepancy $\EE[Z(H, G_n)^4] - 3$ and the maximum influence of a vertex $\max_{w \in V(G_n)} D_w(H, G_n).$
	
	Previous work establishing a fourth-moment theorem $Z(H, G_n)$ (for $H = \triangle$ in \cite{BHA22} and for general $H$ in \cite{DHM22}) used martingale central limit theorems to derive convergence rates in the \textit{Kolmogorov distance}:
	$$d_{\mathrm{kol}}(Z(H,G),\mcZ) := \sup\limits_{t \in \RR}\left|\PP\left(Z(H,G) \leq t\right) - \PP\left(\mcZ \leq t\right)\right|,$$
 where $\mcZ \sim \mcN(0, 1)$ is a standard Gaussian.
	Since $d_{\mathrm{kol}}(Z(H,G),\mcZ)$ can be bounded in terms of $W_1$ or $d_2$, informally, when there are no influential vertices, we have the following consequence of~\cref{thm:mainquant}.
	\begin{thm}[Informal] \label{thm:informal}
		Let $\{G_n\}$ be a sequence of graphs with no influential vertices (asymptotically) and let $H$ be a fixed connected simple graph. Then, 
		$$d_{\mathrm{kol}}(Z(H,G_n),\mcZ) \lesssim \min\left((\EE\left[Z(H,G_n)^4\right] - 3)^{\frac{1}{12}}, \max\limits_{w\in V(G_n)}D_w(H,G_n)(\EE\left[Z(H,G_n)^4\right] - 3)^{\frac{1}{8}}\right).$$
		Moreover, if $H = \triangle$ is a triangle, we obtain
		$$d_{\mathrm{kol}}(Z(\triangle,G_n),\mcZ) \lesssim (\EE\left[Z(\triangle,G_n)^4\right] - 3)^{\frac{1}{8}}.$$
	\end{thm}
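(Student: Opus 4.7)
The plan is to derive \cref{thm:informal} as a direct consequence of the main quantitative result, \cref{thm:mainquant}, combined with standard inequalities that bound the Kolmogorov distance by the integral probability metrics $W_1$ and $d_2$ when the target distribution is Gaussian.

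First, I would invoke \cref{thm:mainquant} to obtain two complementary bounds for $Z(H,G_n)$. Writing $\Delta_n := \EE[Z(H,G_n)^4]-3$ and $M_n := \max_{w\in V(G_n)} D_w(H,G_n)$ (understood relative to $\sigma_H(G_n)$), the main quantitative theorem furnishes (i) a pure fourth-moment bound, schematically of the form $d_2(Z(H,G_n), \mcZ) \lesssim \Delta_n^{1/4}$, that is independent of influences, and (ii) a sharper $W_1$ bound, schematically $W_1(Z(H,G_n), \mcZ) \lesssim M_n^{2}\Delta_n^{1/4}$, that exploits the low-influence condition. The assumption that $G_n$ has no influential vertices ensures $M_n \to 0$, so both bounds are nontrivial.

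Second, I would transfer these bounds to the Kolmogorov distance via classical smoothing. Since $\mcZ$ has a uniformly bounded density, for any random variable $X$ one has $d_{\mathrm{kol}}(X, \mcZ) \lesssim \sqrt{W_1(X, \mcZ)}$, and an analogous argument convolving $X$ with a Gaussian of variance $\delta^2$ and optimizing $\delta$ yields $d_{\mathrm{kol}}(X, \mcZ) \lesssim d_2(X, \mcZ)^{1/3}$. Applying the $d_2^{1/3}$ inequality to (i) yields the exponent $1/12$, while applying $\sqrt{\,\cdot\,}$ to (ii) yields the factor $M_n \Delta_n^{1/8}$; taking the smaller of the two produces the stated minimum.

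For the triangle case $H = \triangle$, the improvement that eliminates the explicit $M_n$ factor comes from a triangle-specific combinatorial argument: when $H$ is a triangle, $M_n$ is itself bounded by a suitable power of $\Delta_n$, essentially because the only ways for a vertex to participate in many triangles visibly inflate the fourth moment. Substituting this bound into the influence-dependent estimate absorbs $M_n$ into $\Delta_n$ and leaves only $\Delta_n^{1/8}$.

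The main obstacle is the $d_2 \to d_{\mathrm{kol}}$ conversion, which requires careful optimization of the smoothing scale to match the stated exponents; a secondary obstacle is the triangle-specific combinatorial step showing that $M_n$ is controlled by $\Delta_n$ for $H = \triangle$, which relies on the particular chaos decomposition of monochromatic triangle counts. Once these are in place, the remainder of the argument is routine bookkeeping from \cref{thm:mainquant}.
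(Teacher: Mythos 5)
Your treatment of the general case follows the paper's route: \cref{thm:mainquant} gives a $W_1$ bound carrying a $D_v$ prefactor on the fourth-moment term and a $d_2$ bound without it, and the conversions $d_{\mathrm{kol}}\lesssim \sqrt{W_1}$ (from \cite[Theorem 3.3]{chen2011normal}) and $d_{\mathrm{kol}}\lesssim d_2^{1/3}$ (proved in \cref{lem:koltod2} by exactly the mollification you describe) yield the exponents $\tfrac{1}{12}$ and $\tfrac18$; this is precisely \cref{cor:kolgeneral}. (Minor quibble: the actual $W_1$ bound is $\lesssim D_v\, m_4 \approx M_n\Delta_n^{1/4}$, not $M_n^2\Delta_n^{1/4}$, but the statement is informal so the bookkeeping discrepancy is forgivable.)

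The triangle case is where your proposal has a genuine gap. You claim the factor $M_n$ disappears because ``$M_n$ is itself bounded by a suitable power of $\Delta_n$'' for triangles. That mechanism is not the paper's, and it is false: take $G_n$ to be a friendship graph (many triangles glued at one vertex) together with many disjoint triangles. There are no $4$-cycles and no $s$-pyramids, so $\EE[Z(\triangle,G_n)^4]-3\to 0$ rapidly, while the maximal vertex influence $D_v/\sigma_\triangle$ can be tuned to decay arbitrarily slowly; no fixed power of $\Delta_n$ controls $M_n$. The actual source of the improvement is structural: $F_{\triangle,G}$ is a quadratic multi-linear polynomial whose only nonconstant homogeneous component is $F_{\triangle,G,2}$, so by \cref{lem:homogenvar} one has $\EE[\widetilde F_{\triangle,G,2}(\mfN)^2]=1$, and the factor $\max_m \EE[F_m^2(\mfN)]^{-1}$ appearing in the $W_1$ bound of \cref{thm:fourthmoment} is exactly $1$ rather than being bounded by $2^r\max_v D_v$ as in \eqref{eq:inftovar}. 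Hence for triangles the \emph{$W_1$} bound of \cref{lem:polyCLT} is already free of the $D_v$ prefactor, and $d_{\mathrm{kol}}\lesssim \sqrt{W_1}\lesssim m_4^{1/2}\approx \Delta_n^{1/8}$ follows directly (\cref{cor:koltriangles}). You need this chaos-decomposition observation, not a combinatorial comparison of $M_n$ against the fourth moment.
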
 
 \begin{rmk}
     \cref{thm:informal} is based on a rather general convergence result for polynomials in Rademacher random variables. Thus the obtained bounds also apply, mutatis mutandis, to similar problems. For example, a similar bound also holds if the colors are not chosen uniformly, or for variants in which we consider an edge coloring of $G_n$, rather than a vertex coloring, or when allowing $G_n$ to be a hypergraph. In another neighboring problem, related to motif estimation \cite{bdm22}, one asks to count the number of monochromatic subgraphs of a \emph{given} color. Up to the change in the definition of $Z(H,G_n)$, the bound in \cref{thm:informal} applies to this version verbatim. We further elaborate on this problem in \cref{sec:related}.
 \end{rmk}
	In the setting of $2$-coloring graphs with no influential vertices, we are able to bypass the difficulties encountered by the previous martingale arguments. Our result improves on the rates obtained in~\cite{DHM22}, where the authors achieved a convergence rate of $\approx (\EE\left[Z(H,G)^4\right] - 3)^{\frac{1}{20}}$. To our knowledge, these are the best-known rates in this setting. Notably, \cref{thm:informal} offers a substantial improvement when $H = \triangle$, arguably one of the more fundamental cases. Furthermore, as will be evident from \cref{thm:mainquant}, our rates further improve when considering the $W_1$ and $d_2$ metrics.\\
 
    \paragraph{\bf Characterizing the asymptotic normality of monochromatic triangles counts with pairwise influences.}
	In the above result, we impose the sufficient condition that $\{G_n\}$ lacks influential vertices. There are numerous examples of interest satisfying this condition, such as vertex-transitive graphs, some sparse graphs, and certain types of random graphs. However, a moment of thought will reveal that this condition is not necessary. For example, take $G_n$ to be a star on $n$ vertices, and $H = K_2$ an edge. Then, as a sum of independent variables $Z(K_2,G_n) \xrightarrow{n\to \infty}\mathcal{N}(0,1)$, yet the middle vertex is clearly influential. More generally, if each $G_n$ has a single influential vertex, by the symmetry of the colors (and conditioning on the color of $v$, $X_v$), one may see that \cref{thm:informal} continues to hold for $Z(H, G_n)$. Keeping this in mind, one might expect that if all influential vertices are ``sufficiently far away from each other'' (and so do not interact in fourth-moment calculations), then the fourth-moment theorem should continue to hold. This suggests that the true obstruction of a fourth moment theorem might be having subsets of nearby vertices $\rb$ that are as a collection, influential.
 
	In our next result, we apply this argument to triangles, when $H = \triangle$. In this case, we strengthen~\cref{thm:mainquant} by showing that a \textit{necessary and sufficient} condition for the fourth moment theorem to hold is that $G_n$ lacks \textit{influential edges}.
	\begin{thm}\label{thm:inf-edge}
		Consider a sequence of graphs $\{G_n\}$ with $\mcI_n$ the corresponding sets of influential vertices, and let $\sigma_{\triangle} = \sqrt{\mathrm{Var}(T(\triangle, G_n))}.$ Suppose that $\sup_n |\mcI_n| < \infty$ and $\mcZ \sim \mcN(0, 1)$ is a standard Gaussian.
		\begin{enumerate}
			\item If for every $n$, there exists $e \in E(G_n)$ such that 
			$$\lim\sup_{n \to \infty} \frac{D_e(\triangle, G_n)}{\sigma_{\triangle}} > 0$$
			Then,
			$$\lim \sup_{n \to \infty} d(Z(\triangle, G_n), \mcZ) > 0.$$
			\item Conversely, let $M_n := \max\limits_{e \in E(G_n)} \frac{D_e(\triangle, G_n)}{\sigma_{\triangle}}.$ Then, for $\mcZ \sim \mcN(0, 1)$,
			$$d_{\mathrm{kol}}(Z(\triangle,G_n),\mcZ) \lesssim \left(M_n^\frac{1}{2}  + (\EE\left[Z(\triangle, G_n)^4\right] - 3) \right)^{\frac{1}{5}}.$$
		
		\end{enumerate}
	\end{thm}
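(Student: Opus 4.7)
The plan is to exploit the fact that $Z(\triangle, G_n)$ admits an especially simple algebraic representation. Using the identity $\ind\{X_u = X_v = X_w\} = \tfrac{1}{4}(1 + X_uX_v + X_vX_w + X_uX_w)$, valid for Rademacher variables, the definition of $T(\triangle, G_n)$ collapses (after centering) to
\[
Z(\triangle, G_n) = \frac{1}{4\sigma_\triangle}\sum_{\{u,v\} \in E(G_n)} D_{\{u,v\}}(\triangle,G_n)\, X_u X_v,
\]
a normalized degree-$2$ multilinear Rademacher polynomial with non-negative coefficients $c_e := D_e/(4\sigma_\triangle)$ satisfying $\sum_e c_e^2 = 1$; in particular $M_n = 4\max_e c_e$ controls the edge (Fourier) influences of $Z$.

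For part $(2)$, I would invoke the quantitative invariance principle of Mossel--O'Donnell--Oleszkiewicz to compare $Z(\triangle, G_n)$ with its Gaussian analogue $\widetilde Z = \tfrac{1}{4\sigma_\triangle}\sum_e D_e\, g_u g_v$, where $\{g_v\}$ are i.i.d.\ standard Gaussians. Since $\widetilde Z$ lies in the second Wiener chaos and has unit variance, the Nualart--Peccati fourth-moment theorem (or its refinement \cref{thm:fourthmoment} developed earlier in the paper) yields $d_2(\widetilde Z, \mcZ) \lesssim (\EE[\widetilde Z^4] - 3)^{1/2}$, while the invariance principle bounds both $d_2(Z, \widetilde Z)$ and $|\EE[Z^4] - \EE[\widetilde Z^4]|$ by $M_n^{1/2}$ up to a constant. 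Combining these via the triangle inequality in $d_2$ and then converting to Kolmogorov distance through a standard Stein-smoothing/regularization argument (optimizing the trade-off between the invariance and fourth-moment error terms) produces the claimed rate with $1/5$ exponent.

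For part $(1)$, I argue by contradiction via conditioning. Assume $Z(\triangle, G_n) \to \mcZ$ in distribution along some subsequence and extract a further subsequence on which $c_{e_n^*} \to c_\infty > 0$ for a sequence of edges $e_n^* = \{u_n^*, v_n^*\}$ realizing $\limsup_n D_{e_n^*}/\sigma_\triangle > 0$. Since $D_w \geq D_e$ whenever $w \in e$, both endpoints of $e_n^*$ automatically belong to $\mcI_n$; by $\sup_n |\mcI_n| < \infty$, pass to a further subsequence on which $|\mcI_n| = k$ is constant and all coefficients $c_e$ with $e \subseteq \mcI_n$ converge. Conditioning on $\{X_u\}_{u \in \mcI_n} = \sigma \in \{\pm 1\}^k$, the residual polynomial becomes
\[
p_\sigma(X) = Z_{UU}(\sigma) + \sum_{v \notin \mcI_n}\Big(\sum_{u \in \mcI_n} c_{\{u,v\}}\sigma_u\Big) X_v + \sum_{\{v,v'\}\subseteq V(G_n)\setminus \mcI_n} c_{\{v,v'\}} X_v X_{v'}.
\]
For $v \notin \mcI_n$, the vertex Fourier influence $\sum_w c_{\{v,w\}}^2$ is $O(\eps^2)$ by non-influence of $v$, and the induced linear perturbation contributes at most $O_k(\eps^2)$; hence $p_\sigma$ has uniformly small vertex influences. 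Applying part $(2)$ to each $p_\sigma$ forces $p_\sigma \to \mcN(Z_{UU}(\sigma), v_\sigma)$ in distribution, so $Z$ converges to the finite mixture $\sum_\sigma 2^{-k}\, \mcN(Z_{UU}(\sigma), v_\sigma)$. Since $Z_{UU}(\sigma) = c_{e_n^*}\sigma_{u_n^*}\sigma_{v_n^*} + (\text{other terms})$ has limit coefficient $c_\infty > 0$ on the $\sigma_{u_n^*}\sigma_{v_n^*}$ monomial, the values $Z_{UU}(\sigma)$ realize at least two distinct limit means; hence the mixture cannot itself be a single Gaussian (a direct characteristic-function computation shows that the non-degenerate mixture introduces oscillatory $\cos$-type zeros that the positive Gaussian CF $e^{-t^2/2}$ cannot match), contradicting $Z \to \mcZ$.

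The main obstacle is executing the conditional CLT step with enough uniformity in $\sigma$ to transfer the mixture structure rigorously into the limit. Concretely, one must ensure the maximum vertex influence in $p_\sigma$ is uniformly small across all $2^k$ configurations, which requires careful bookkeeping of the $\sigma$-dependent linear coefficients $\alpha_v(\sigma) = \sum_{u \in \mcI_n} c_{\{u,v\}}\sigma_u$. The hypothesis $|\mcI_n| < \infty$ is crucial here, as it bounds both the number of summands in $\alpha_v(\sigma)$ and their total squared contribution to the vertex influence of $p_\sigma$. Invoking the quantitative bound from part $(2)$ with explicit error tracking then secures the approximation $p_\sigma \approx \mcN(\mu_\sigma, v_\sigma)$ uniformly over $\sigma$, which is what ultimately enables the final characteristic-function / mixture-of-Gaussians contradiction.
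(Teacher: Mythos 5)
Both halves of your proposal contain genuine gaps.

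\textbf{Part (2).} Your plan is to run the invariance principle plus the Gaussian fourth-moment theorem and to claim that both $d_2(Z,\widetilde Z)$ and $|\EE[Z^4]-\EE[\widetilde Z^4]|$ are $O(M_n^{1/2})$. This is false: the invariance principle (\cref{thm:invariance}) controls the error by the maximal \emph{single-variable} influence $\max_v \infl_v(F) \asymp \max_v D_v^2(\triangle,G_n)/\sigma_\triangle^2$, i.e.\ by the \emph{vertex} influence, not the edge influence $M_n$. These are not comparable. Take $G_n$ to be a friendship graph ($n$ triangles sharing one vertex $v$, otherwise disjoint): then $D_e(\triangle,G_n)=1$ for every edge so $M_n\asymp n^{-1/2}\to 0$, yet $D_v/\sigma_\triangle\asymp \sqrt n\to\infty$. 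Here $Z(\triangle,G_n)(X)$ is asymptotically standard normal (conditionally on $X_v$ it is a normalized $\Bin(n,1/4)$), whereas the Gaussian surrogate $\widetilde Z(\mfN)$ is asymptotically a product of two independent Gaussians, so $d_2(Z,\widetilde Z)$ and $|\EE[Z^4]-\EE[\widetilde Z^4]|$ stay bounded away from $0$ while $M_n\to 0$. The whole point of part (2) is that it only forbids influential \emph{edges}, a strictly weaker hypothesis than forbidding influential vertices, and that is exactly the regime where the invariance-principle route breaks. The paper instead proves part (2) combinatorially (\cref{p:lowinftriangle}): it takes the Kolmogorov bound and the exact fourth-moment expansion of \cite{BHA22} in terms of $s$-pyramids $\triangle_s$ and the $28$ auxiliary joins $H_s$, observes that only the pyramid counts enter the fourth moment with negative sign, and bounds $N(\triangle_s,G_n)/\sigma_\triangle^4\le 10M_n^2$ by summing $D_e^4$ over edges. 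No invariance principle is used.

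\textbf{Part (1).} You assert that after conditioning on the colors of the (boundedly many) influential vertices, the residual polynomial $p_\sigma$ has small vertex influences and therefore ``applying part (2)'' forces $p_\sigma\to\mcN(\mu_\sigma,v_\sigma)$. Small vertex influences only let you replace Rademacher inputs by Gaussians; they do \emph{not} make the residual quadratic form $\nt^{(2)}(\mfN)$ Gaussian, and part (2) cannot be applied because the conditional fourth-moment condition is not available under your contradiction hypothesis. The paper must therefore split into two cases according to the top eigenvalue $\lambda_{1,n}$ of the diagonalized form $\nt^{(2)}(\mfN)=\sum_i\lambda_{i,n}(M_i^2-1)$: if $\limsup|\lambda_{1,n}|>0$, the conditional laws have sub-exponential lower tail bounds $\PP(|\nt^{(\rho)}(\mfN)|>t)\ge e^{-c't}$ uniformly in $\rho$ (\cref{lem:Hsubexp}), which already rules out a Gaussian limit with no mixture argument; only when $\lambda_{1,n}\to 0$ does Berry--Esseen give a Gaussian conditional limit, and then the finite-mixture identifiability argument (your characteristic-function step, the paper's \cite{teicher1963identifiability}) applies, with the nonvanishing mean separation supplied by $\Var_R(\nt^{(0,R)})=\frac{1}{16\sigma_\triangle^2}\sum_{uv\in\mcI_n}D_{uv}^2\ge \frac{1}{16\sigma_\triangle^2}D_{e^*}^2$. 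Your proposal silently assumes you are always in the second case. The conditioning/mixture skeleton of your part (1) matches the paper's Case II, but without the eigenvalue dichotomy the argument is incomplete.
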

	Putting \cref{thm:inf-edge} in words, as long as the number of influential vertices is bounded uniformly over $n$, for $Z(\triangle, G_n)$ to be asymptotically normal, it is \emph{necessary and sufficient} that $G_n$ contains no influential edges and that  $\EE\left[Z(\triangle, G_n)^4\right] \xrightarrow{n \to \infty} 3.$ Thus, \cref{thm:inf-edge} gives a complete characterization of sequences $\{G_n\}$ for which the monochromatic triangle count is approximately normal. We remark that \cref{thm:inf-edge} includes the additional condition of having a bounded number of influential vertices. We regard this as a technical condition that seems to be an artifact of the proof of necessity.\\

    \paragraph{\bf Towards a characterization of general monochromatic subgraph counts.}
	Regarding more general subgraph counts, we establish sufficiency of having no influential pairs. Since every vertex participating in an influential pair is necessarily an influential vertex, this condition is weaker than the condition imposed by~\cref{thm:informal}, thus yielding a stronger result.
    \begin{restatable}{thm}{edge_general_case}\label{thm:inf-edge-general}
		Consider a sequence of graphs $\{G_n\}$, a graph $H$, and let $\sigma_{H} = \sigma_H(G_n).$ If $M_n := \max\limits_{\w \in V(G_n)_2} \frac{D_{\w}(\triangle, G_n)}{\sigma_{H}},$ then, for $\mcZ \sim \mcN(0, 1)$, 
		$$d_{\mathrm{kol}}(Z(H,G_n),\mcZ) \lesssim \left(M_n^2  + (\EE\left[Z(H, G_n)^4\right] - 3) \right)^{\frac{1}{20}}.$$
	\end{restatable}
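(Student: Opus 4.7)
The approach is to reduce to the low vertex-influence situation governed by \cref{thm:mainquant} via a conditioning argument, with the pair-influence hypothesis $M_n$ controlling the error introduced by the conditioning. Fix a threshold $\eps > 0$ to be optimized, and set $\mcI_\eps := \{w \in V(G_n) : D_w(H, G_n) \geq \eps\, \sigma_H\}$. A standard Efron--Stein/ANOVA argument (in the spirit of \cref{lem:countinginfluence}) should yield $|\mcI_\eps| \lesssim 1/\eps^2$, since each $\eps$-influential vertex accounts for at least $\eps^2\sigma_H^2$ of the total variance $\sigma_H^2$.

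Next, split $T(H, G_n) = T_{\mathrm{good}} + T_{\mathrm{bad}}$, where $T_{\mathrm{bad}}$ collects those terms in \eqref{d:thgn} whose index tuple $\bs$ contains at least two vertices of $\mcI_\eps$. Bounding each indicator $\ind\{X_{=\bs}\}$ by $1$ and invoking the pair-influence hypothesis yields the deterministic estimate
\[ T_{\mathrm{bad}} \;\leq\; \sum_{\{w_1, w_2\} \subseteq \mcI_\eps} D_{(w_1, w_2)}(H, G_n) \;\leq\; |\mcI_\eps|^2 M_n \sigma_H \;\lesssim\; M_n \sigma_H / \eps^4, \]
so $T_{\mathrm{bad}}/\sigma_H$ is bounded in $L^\infty$ by $O(M_n/\eps^4)$. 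I would then analyze $T_{\mathrm{good}}$ by conditioning on $X_{\mcI_\eps} := (X_w)_{w \in \mcI_\eps}$; for each of the $2^{|\mcI_\eps|} = O_\eps(1)$ outcomes, $T_{\mathrm{good}}$ is a multilinear Rademacher polynomial in $(X_v)_{v \notin \mcI_\eps}$ in which every variable has vertex influence at most $\eps\,\sigma_H$ by construction, so \cref{thm:mainquant} applies to give a Kolmogorov-distance bound for the (suitably centered and rescaled) conditional polynomial of the form $\eps^\alpha + \widetilde{\Delta}_4^\beta$, where $\widetilde{\Delta}_4$ denotes the conditional fourth-moment discrepancy.

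The remaining step is to pass from conditional to unconditional estimates. Using hypercontractivity for multilinear Rademacher polynomials of bounded degree, the $L^\infty$-bound on $T_{\mathrm{bad}}$ translates into the corresponding $L^4$ bound, and an expansion of $\EE[(T_{\mathrm{good}} + T_{\mathrm{bad}})^4]$ together with a similar expansion of the corresponding conditional fourth moments shows that the average (over $X_{\mcI_\eps}$) of the conditional discrepancy $\widetilde{\Delta}_4$ is dominated by $\Delta_4 := \EE[Z(H, G_n)^4] - 3$ plus an error of order $(M_n/\eps^4)^c$. Summing the conditional Kolmogorov bounds, absorbing the $O(M_n/\eps^4)$ contribution from $T_{\mathrm{bad}}$, and optimizing $\eps$ to equate $\eps^\alpha$ with the error from the pair-influence terms yields the announced rate $(M_n^2 + \Delta_4)^{1/20}$. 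The weaker exponent compared to the triangle case of \cref{thm:inf-edge} reflects the higher degree $r$ of $Z(H, G_n)$ and the fact that we only have pair, rather than edge, influence control.

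The main obstacle I anticipate is this last passage from conditional to unconditional fourth moments: although $T_{\mathrm{bad}}$ is small in $L^\infty$, the mixed terms appearing in the expansion of $(T_{\mathrm{good}} + T_{\mathrm{bad}})^4$ involve contractions whose individual sizes are not immediately controlled by $M_n$ alone. The positivity of the coefficients of $T_{\mathrm{good}}$ as a Rademacher polynomial, combined with \cref{thm:fourthmoment}, which crucially does \emph{not} require chaos membership and so applies to the mean-subtracted polynomial directly, is what should allow these cross terms to be absorbed into the unconditional $\Delta_4$; making this absorption quantitative is the technical core of the argument.
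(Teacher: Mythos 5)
Your proposal takes a completely different route from the paper, and as written it has gaps that I do not see how to close. The paper never conditions on influential vertices for this theorem: it starts from the bound $d_{\mathrm{kol}}(Z(H,G_n),\mcZ)\lesssim\left(N(\gj,G_n)/\sigma_H^4\right)^{1/20}$ of \cref{t:main-dhm}, splits the good joins according to whether some two of the four copies of $H$ share at least two vertices (those are counted by $\lesssim M_n^2\sigma_H^4$ using the pair influence and \cref{eq:var-gen}), and shows via the inclusion--exclusion identity of \cref{l:pie-4} that the remaining joins contribute \emph{nonnegatively} to $\EE[Z(H,G_n)^4]-3$; this yields $N(\gj,G_n)/\sigma_H^4\lesssim M_n^2+(\EE[Z(H,G_n)^4]-3)$ and the theorem follows.

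The first gap is your cardinality bound $|\mcI_\eps|\lesssim 1/\eps^2$. The Efron--Stein/ANOVA identity controls $\sum_v\infl_v(\MP)$, but $\infl_v(\MP)=4^{1-r}\sum_{v\in\rb,\,|\rb|\ \mathrm{even}}D_\rb^2$ involves only even-size subsets: the quantity $D_v^2$ itself does not appear in the variance (the sum in \cref{eq:var-gen} starts at $k=2$, because $\hind_H$ has no odd-degree monomials), and \cref{lem:countinginfluence} gives only $\infl_v\le D_v^2/2^{r-1}$, an inequality pointing the wrong way for your purpose. Concretely, for $H=K_2$ and $G_n$ a disjoint union of $k$ stars with $n$ leaves each, every center $v$ has $D_v=n$ while $\sigma_H\asymp\sqrt{kn}$, so taking $k\approx n/\eps^2$ produces $n/\eps^2\to\infty$ many $\eps$-influential vertices even though $M_n\to 0$ and the conclusion of the theorem holds; both $2^{|\mcI_\eps|}$ and your bound $|\mcI_\eps|^2M_n\sigma_H$ on $T_{\mathrm{bad}}$ are then uncontrolled. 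This is exactly why \cref{thm:inf-edge} must \emph{assume} $\sup_n|\mcI_n|<\infty$, an assumption deliberately absent from \cref{thm:inf-edge-general}. Second, once you condition on a partial coloring $\rho$ containing some $-1$'s, the coefficients of the conditional polynomial acquire mixed signs (every monomial meeting an odd number of vertices colored $-1$ flips sign), so \cref{lem:allpos} no longer verifies the hypotheses \eqref{eq:conds} and neither \cref{thm:fourthmoment} nor \cref{thm:mainquant} applies to the conditional polynomial; the paper escapes this for triangles only because degree-two polynomials satisfy \eqref{eq:conds} automatically. Third, even granting both points, summing conditional Kolmogorov bounds shows only that $Z(H,G_n)$ is close to a mixture of Gaussians whose conditional variances depend on $\rho$; to conclude proximity to a single $\mcN(0,1)$ you must show these conditional variances asymptotically coincide, which is precisely the identifiability step the paper can carry out for triangles via the quadratic structure and which, for general $H$, is the open content of \cref{c:main}.
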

    We now discuss possible generalizations of the first part of \cref{thm:inf-edge}. Our proof of the necessity condition for triangles proceeds by representing $Z(\triangle, G_n)$ as a quadratic polynomial in Rademacher variables. Such a quadratic representation is absent for general $H$ and so there is no way to generalize the proof verbatim. However, we conjecture that some adaptation to polynomials of higher degree should exist and that the above phenomenon holds more generally:
	\begin{conj}\label{c:main}
		Given a fixed graph $H$ and graph sequence $\{G_n\}$, if for all $N$, there exists some $n > N$ such that $G_n$ has an influential pair of vertices $\rb = (v, w) \in V(G_n)_2$, then $Z(H, G_n) \centernot{\xrightarrow{\text{in law}}} \mcN(0, 1)$. 
	\end{conj}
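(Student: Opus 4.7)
The plan is to isolate the Rademacher Fourier mode indexed by the influential pair and to argue that this mode creates an obstruction to asymptotic normality that cannot be absorbed into the rest of the polynomial. Let $(v_n, w_n)$ be an $\varepsilon$-influential pair in $G_n$ for infinitely many $n$, and assume for contradiction that $Z_n := Z(H, G_n)$ converges in law to $\mathcal{N}(0, 1)$. Using the elementary identity $\ind\{X_{s_1} = \cdots = X_{s_r}\} = 2^{1-r} \sum_{S \subseteq [r],\, |S| \text{ even}} \prod_{i \in S} X_{s_i}$, a direct computation gives the Rademacher Fourier coefficient
\[
\EE\left[Z_n X_{v_n} X_{w_n}\right] = \frac{2^{1-r} D_{(v_n, w_n)}(H, G_n)}{\sigma_H(G_n)} \ge 2^{1-r} \varepsilon =: c > 0,
\]
yielding the $L^2$-orthogonal decomposition $Z_n = c_n \chi_n + W_n$, where $\chi_n := X_{v_n} X_{w_n}$, $c_n \ge c$, and $W_n$ has Rademacher Fourier support on even subsets $S \ne \{v_n, w_n\}$.

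If, along a subsequence, $(W_n, \chi_n)$ converged jointly to a pair $(W, \chi)$ with $\chi \sim \Rad(1/2)$ \emph{independent} of $W$, and $c_n \to c' \ge c$, then $Z_n$ would converge in law to $c'\chi + W$, whose characteristic function equals $\cos(c't)\,\phi_W(t)$; this cannot equal $e^{-t^2/2}$, since $\cos(c't)$ vanishes at $t = \pi/(2c')$ while $e^{-t^2/2}$ does not, contradicting the hypothesis that $Z_n$ has limit $\mathcal{N}(0, 1)$. Thus the conjecture reduces to establishing the \emph{asymptotic independence} of $W_n$ and $\chi_n$. For this, I would first apply the invariance principle of Mossel--O'Donnell--Oleszkiewicz to replace the Rademacher variables $X_u$ ($u \ne v_n, w_n$) by independent Gaussians, producing a hybrid polynomial $\tilde{Z}_n = c_n X_{v_n} X_{w_n} + \tilde{W}_n(g, X_{v_n}, X_{w_n})$ with asymptotically the same distribution (this step requires negligible single-vertex influences outside $\{v_n, w_n\}$, which can be arranged along a subsequence). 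Conditional on $(X_{v_n}, X_{w_n})$, $\tilde{W}_n$ is a Gaussian polynomial whose coefficients are affine in the frozen variables, and I would invoke \cref{thm:fourthmoment} together with a contraction-based Malliavin--Stein argument to show that the conditional law of $\tilde{W}_n$ is asymptotically independent of $(X_{v_n}, X_{w_n})$.

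The principal obstacle---and the reason \cref{c:main} is stated as a conjecture---is executing this last step in full generality. When $H = \triangle$, $Z_n$ is a pure quadratic form whose Fourier support, after removing the $\{v_n, w_n\}$ mode, lies on pairs $\{u, u'\} \ne \{v_n, w_n\}$, so the dependence of $W_n$ on $(X_{v_n}, X_{w_n})$ is benign and is handled by standard quadratic-form analysis, as in \cref{thm:inf-edge}. For higher-degree $H$, the Fourier support of $W_n$ includes sets $S$ with $\{v_n, w_n\} \subsetneq S$, and cross-interactions between these higher-order modes and $c_n \chi_n$ could in principle create cancellations that mask the characteristic-function obstruction above. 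Ruling out such cancellations appears to require a genuinely tensor-analytic refinement of the fourth-moment theorem in which the rank-one contribution aligned with $\chi_n$ is controlled separately from the rest of the polynomial---producing such a refinement is, in my view, the true technical gap.
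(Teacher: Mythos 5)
The statement you are proving is \cref{c:main}, which the paper explicitly leaves as an \emph{open conjecture}: the authors state that the required analysis ``seems to be beyond the scope of this work'' and only establish (i) the triangle case, under the extra hypothesis $\sup_n|\mcI_n|<\infty$ (\cref{prop:edgenoCLT}), and (ii) the general case under the much stronger notion of a \emph{strongly} influential pair (\cref{prop:stronginf}). Your proposal is likewise not a proof, and to your credit you say so: the ``asymptotic independence of $W_n$ and $\chi_n$'' is exactly the step you cannot execute. Your preliminary computations are sound --- the coefficient $\EE[Z_nX_{v_n}X_{w_n}]=2^{1-r}D_{(v_n,w_n)}/\sigma_H\ge 2^{1-r}\eps$ follows from \eqref{eq:countingmultirep}, $c_n$ is bounded above by \cref{eq:var-gen} so subsequential limits exist, and the $\cos(c't)\phi_W(t)\ne e^{-t^2/2}$ obstruction is correct \emph{if} independence held.

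However, two points go beyond the gap you acknowledge. First, the reduction itself is suspect: asymptotic independence of $W_n$ and $\chi_n$ is likely false even in the triangle case, because $W_n$ retains Fourier weight on sets containing $v_n$ or $w_n$ individually, so the conditional law of $W_n$ given $(X_{v_n},X_{w_n})$ genuinely varies with the sign pattern $\rho$ (in the paper's notation, both $\sigma_\rho^2$ and the contribution of other influential pairs depend on $\rho$, not merely on $\chi_n$). The paper's triangle argument therefore does \emph{not} prove independence; it conditions on \emph{all} influential vertices, shows the limit is either sub-exponential (when the top eigenvalue $\lambda_{1,n}$ of the quadratic part stays bounded away from $0$ --- a case in which the influential pair plays no role at all) or a finite Gaussian mixture, and then invokes identifiability of Gaussian mixtures to show the mixture is non-degenerate precisely because of the influential edge. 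Your characteristic-function argument would need to be replaced by such a mixture-identifiability argument, and for general $H$ the conditional pieces are no longer Gaussian, which is the real difficulty. Second, your claim that negligible single-vertex influences outside $\{v_n,w_n\}$ ``can be arranged along a subsequence'' is unjustified: a graph sequence can have many influential vertices besides the pair, and even for triangles the paper must \emph{assume} $\sup_n|\mcI_n|<\infty$ to run the conditioning argument. As written, your sketch silently assumes away both of these obstructions.
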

    We believe that a careful case analysis of possible tail behaviors of polynomials in Rademacher variables is the key to proving \cref{c:main}. While such an analysis seems to be beyond the scope of this work, we provide some evidence for its validity. In \cref{prop:stronginf} we show that the conjecture holds with respect to a stronger notion of influence. In this stronger notion, we ask that a single pair of vertices be incident to a constant fraction of the copies of $H$ in $G_n$. 

    One possible approach to prove \cref{c:main} is to establish some discrepancy between lower-order moments of $Z(H,G_n)$ and the standard Gaussian. However, the counterexamples to the fourth-moment phenomenon in \cite{BHA22} are all graphs with a single influential pair, where $\EE\left[Z(\triangle, G_n)^4\right] = 3$, in line with~\cref{thm:inf-edge}. In~\cref{s:exbad} we show that trying to look at higher fixed moments is likely also doomed to fail. We provide an explicit construction of a graph containing an influential edge, such that $Z(\triangle, G_n)$ matches the first $6$ moments of the standard Gaussian but where $Z(\triangle, G_n)$ is not asymptotically Gaussian. The implication here is that a solution to \cref{c:main} should probably look at more refined statistics of $Z(H,G_n)$ which capture the actual behavior of the tails.

    There is another natural approach to try which leads to a very natural question. When $c \geq 30$, using the results from~\cite{bdm22}, it is not hard to show that the existence of an influential pair implies $\EE\left[Z(H,G_n)^4\right]-3 > \eps > 0$. Hence with many colors, an influential pair precludes a CLT. This begs the following question about the monotonicity of the CLT with respect to the number of colors.
    \begin{qn} \label{qn:monoto}
        Let $c',c \in \NN$ with $c' > c$, and suppose that when using $c$ colors, $Z(H,G_n) \xrightarrow{\text{in law}} \mcN(0,1).$ Is it still true that $Z(H,G_n) \xrightarrow{\text{in law}} \mcN(0,1),$ when the vertices of $G_n$ are colored using $c'$ colors instead?
    \end{qn}
    A positive answer to \cref{qn:monoto} coupled with the previous observation would provide a positive answer to \cref{c:main}. It seems plausible that \cref{qn:monoto} is true; the more colors used the lower in magnitude the correlations between the different copies of $H$ in $G_n$. Common wisdom about the CLT suggests that it should be easier to approximate $Z(H,G_n)$ by a Gaussian in this case.

	\subsection{Related Works} \label{sec:related}

        The idea of representing $Z(H,G_n)$ as a polynomial in Rademacher variables appeared before 
        in~\cite{BHA17}. In the paper, the authors observed that for monochromatic edges, the fourth-moment phenomenon for $Z(K_2, G_n)$ could be reduced to a fourth-moment phenomenon for a homogeneous quadratic polynomial in Rademacher variables. Note, however, that in light of \cref{thm:inf-edge-general}, there is a qualitative difference between counting monochromatic edges and any other shape. When $H = K_2$, and $e(G_n) \to \infty$, no edge is influential. 
		
		More generally, fourth-moment theorems for polynomials in Rademacher random variables were studied before in \cite{dobler2019fourth} (see also \cite{blei2004rademacher} for an earlier result). In the paper, the authors focus on \emph{homogeneous} polynomials and employ a discrete variant of the Malliavin calculus to derive comparable bounds to our \cref{thm:mainquant}. Notably, the maximal influence also plays an important role in their results. Using similar techniques, the paper \cite{zhen2019peccati} later extended the convergence result to a multivariate setting in the asymptotic regime. As our approach is different and relies on the Invariance Principle, we are able to address a somewhat more general setting and consider non-homogeneous polynomials. Not only that, by restricting our attention to polynomials induced by monochromatic subgraph counts we identify the pairwise influence condition from \cref{thm:inf-edge} and \cref{thm:inf-edge-general}, rather than the single variable influence, as the main quantity governing the validity of the central limit theorem.
  
        We finish by discussing the related problem of subgraph sampling and motif estimation. In this problem, given a large graph $G$ and a reference graph $H$, one wishes to estimate the $N(H,G)$. To reduce the complexity, a natural approach is to subsample the vertices of $G$ with some probability $p$ and obtain a new graph $\tilde G$. It is then a standard procedure to obtain an unbiased estimator for $N(H,G)$ from $N(H,\tilde{G})$. To make the connection with the present setting, when coloring $G$ with $c$ colors, we take $p = \frac{1}{c}$, and $N(H,\tilde{G})$ becomes the number of monochromatic copies of $H$ of a \emph{given} fixed color. Thus, the subgraph sampling statistic is an asymmetric version of the monochromatic subgraph count statistic. From a statistical point of view, for a sequence of graphs $\{G_n\}$, establishing the asymptotic normality of $N(H,\tilde{G_n})$ is important for applications such as hypothesis testing.

        The asymptotic normality of $N(H,\tilde{G_n})$ was extensively studied in \cite{bdm22}, where the authors proved a fourth-moment theorem when $c \geq 20$ (equivalently when $p \leq \frac{1}{20}$). Similar to the case of monochromatic subgraphs, this result leaves open the question of asymptotic normality for small $c$. When $c = 2$, as mentioned above, $N(H,\tilde{G_n})$ can also be represented as a polynomial in Rademacher variables, and so Theorem \ref{thm:mainquant} applies almost verbatim. We thus get a fourth-moment theorem when $G$ does not have influential vertices. In contrast to our problem, due to the asymmetric nature of $N(H,\tilde{G_n})$ we do not expect that the existence, or lack, of an influential pair, will play an important role.  As an example, consider $G_n$ a star on $n$ vertices, which has an influential vertex and no influential pairs, and $H = K_2$ an edge. It is readily seen that $N(H,\tilde{G_n})$ is not asymptotically normal. For $N(H,\tilde{G_n})$, it seems reasonable to expect that the fourth-moment phenomenon will be completely determined by the existence of influential vertices.  
        \subsection*{Acknowledgements}
        We are grateful to Elchanan Mossel for enlightening discussions concerning the Invariance Principle and for being involved in the early stages of this project. We also thank Sayan Das and Zoe Himwich for helpful discussions. NM was supported by a Hertz Graduate Fellowship and the NSF GRFP DMS \#2141064.

	\section{Preliminaries}
	\subsection*{Notation}
	Let us first fix some notation we shall use in the sequel. Throughout $\{G_n\}$ is a sequence of simple undirected graphs, with $G_n = (V(G_n), E(G_n))$ having $v(G_n)$ vertices and $e(G_n)$ edges. We assume $e(G_n)\to\infty$ and take $H$ to be a simple connected graph on $r$ vertices and view $r$ as fixed when $n \to \infty$.
	We denote by $V(G_n)_r$ the set of ordered $r$-tuples of distinct vertices from $G_n$. We take $T(H, G_n), Z(H, G_n),$ and $N(H, G_n)$ as defined  in~\cref{d:thgn},~\cref{d:zhgn}, and~\cref{d:nhgn} respectively.
	
	We use $a(n) \lesssim b(n)$ to denote that there is some constant $C > 0$  such that $a(n) \le C b(n)$ for all $n$ and define $a(n) \gtrsim b(n)$ analogously. 
    We allow the implicit constant $C$ to depend on the graph $H$ and its size $r$, but not on other parameters such as $n$, or the graph $G_n$. 
    For $k \in \NN$ we set $[k] = \{1, 2, \ldots, k\}$, and for a set $S$ we denote by $\binom{S}{m}$ the collection of unordered $m$-subsets of $S$,
    and by $\binom{S}{\le m}$ the collection of unordered subsets of $S$ of size at most $m$.
	
	\subsection{Normal Approximations for Polynomials}
	Our aim in this section is to introduce some background and useful results concerning the central limit theorem for polynomial functionals. While some of the mentioned results hold in greater generality, we shall only focus on multi-linear polynomials with either Gaussian or Rademacher variables, as required in our setting. For this, we shall use the standard index notation for multi-linear polynomials. 
	
	For $I \subset [n]$ an index set, and $x = (x_1,\dots,x_n) \in \RR^n$, we set $x^I = \prod\limits_{i\in I}x_i$. Thus, every multi-linear polynomial $F:\RR^n \to \RR$ of degree $d$ can be written as, $F(x) = \sum\limits_{|I| \leq d}\alpha_Ix^I$, where $\{\alpha_I\}$ are some set of coefficients. If $F$ is \textit{homogeneous} of degree $d$, then $\alpha_I = 0$ whenever $|I| < d$.\\
	
	The first result we present is the \textit{fourth-moment theorem} of Nualart and Peccati~\cite{NP05}. In our setting, the theorem deals with a sequence $\{F_k\}_{k\geq 0}$ of homogeneous multi-linear polynomials of degree $d$ and an $n$-dimensional standard Gaussian vector $\mfN$. According to the theorem, as $k \to \infty$, $F_k(\mfN)$ converges in law to a Gaussian if and only if $\mathbb{E}\left[F_k(\mfN)^2\right] \xrightarrow{k\to\infty} \sigma^2$, for some $\sigma > 0$, and $\mathbb{E}\left[F_k(\mfN)^4\right] \xrightarrow{k\to\infty} 3\sigma^4$. The quantity $3\sigma^4$ is precisely the $4^{\mathrm{th}}$ moment of a Gaussian with variance $\sigma^2$. In this regard, the upshot of the theorem is the following: a priori, a central limit theorem for $F_k(N)$ necessitates the convergence of all moments; however, as long as the variance stabilizes, it suffices for the $4^{\mathrm{th}}$ moment to converge.
	
	We shall require a quantitative and multivariate form of the fourth-moment theorem, as it appears in \cite{salim2011gaussian}. To state this quantitative version we first need to introduce the \textit{Wasserstein distance} (the reader is referred to \cite[Chapter 6]{villani2009optimal} for an in-depth exposition of Wasserstein distances). If $X$ and $Y$ are two random vectors in $\RR^m$, we define their Wasserstein distance as,
	\begin{equation} \label{eq:wassdef}
		W_1(X,Y):= \inf\limits_{(X,Y)}\EE\left[\|X-Y\|\right],
	\end{equation}
	where the infimum runs over all couplings of $X$ and $Y$, that is, random vectors in $\RR^{2m}$ whose marginals on the first (resp. last) $m$ coordinates have the same law as $X$ (resp. $Y$). According to the Kantrovich-Rubinstein duality, the Wasserstein distance can also be formulated as the divergence over all $1$-Lipschitz functions,
	\begin{equation} \label{eq:KRduality}
		W_1(X,Y)= \sup\limits_{\substack{f:\RR^m \to \RR\\
				f \text{ is } 1\text{-Lipschitz}}}\left|\EE\left[f(X)\right] - \EE\left[f(Y)\right] \right|.
	\end{equation}
	In light of this characterization, we shall also consider the weaker and smoother $d_2$ distance:
	$$d_2(X,Y) = \sup\limits_{\substack{f:\RR^m \to \RR\\
			\|f\|_{C_2}\leq 1}}\left|\EE\left[f(X)\right] - \EE\left[f(Y)\right] \right|.$$
	Above  $\|f\|_{C_2}:= \max\{\|\nabla f\|_{\infty}, \|\nabla^2f\|_{\infty}\}.$ In other words, $\|f\|_{C_2} \leq 1$ implies that $f$ and $\nabla f$ are $1$-Lipschitz. By \eqref{eq:KRduality} it is now clear that,
	\begin{equation} \label{eq:dtowass}
		d_2(X,Y) \leq W_1(X,Y).    
	\end{equation}
	As will be soon evident, when dealing with random variables having small variances, it is sometimes beneficial to use the $d_2$ distance over the Wasserstein distance. For monochromatic subgraph counts, the $d_2$ distance will help us deal with influential shapes. A key property of both distances is that convergence in $W_1$, and hence also in $d_2$, implies convergence in law.
	\begin{thm} \label{thm:multifourthmoment}\textnormal{\cite[Theorem 1.5]{salim2011gaussian}}. Let $m \geq 1$, and for $i \in [m]$, let $F_i:\RR^n\to \RR$ be a homogeneous multi-linear polynomial of degree $d_i > 0$, with $d_i \neq d_j$ for $i \neq j$. Further let $\mfN \sim \mcN(0,\mathrm{I}_m)$, and define the $m\times m$ diagonal matrix 
		$\Sigma:= \mathrm{diag}\left(\EE\left[F_1(\mfN)^2\right],\dots,\EE\left[F_m(\mfN)^2\right]\right)$. 
		Taking $\widetilde{F} := (F_1(\mfN),\dots,F_m(\mfN)) \in \RR^m$ and $\widetilde{\mfN} \sim \mcN(0,\Sigma)$, if we let
		\begin{align*}
			\Delta(F) := & \Bigg[\sum\limits_{i=1}^m \sqrt{2d_i4^{d_i}\left(\EE\left[F_i(\mfN)^4\right] - 3\EE\left[F_i(\mfN)^2\right]^2\right)}\\
			+ &\sum\limits_{i \neq j}\bigg( \sqrt{2\EE\left[F_j(\mfN)^2\right]}\left(\EE\left[F_i(\mfN)^4\right] - 3\EE\left[F_i(\mfN)^2\right]^2\right)^{\frac{1}{4}}\\
			&\ \ \ \ \ \ \ + d_j2^{d_j}\sqrt{2(d_i + d_j)!\left(\EE\left[F_i(\mfN)^4\right] - 3\EE\left[F_i(\mfN)^2\right]^2\right)}\bigg)\Bigg],
		\end{align*}
		then
		\begin{align*}
			W_1(\widetilde{F},\widetilde{N}) &\leq  \sqrt{\|\Sigma\|_{\op}} \cdot \|\Sigma^{-1}\|_{\op} \cdot \Delta(F),\\
			d_2(\widetilde{F},\widetilde{N}) &\leq \Delta(F).\\
		\end{align*}
	\end{thm}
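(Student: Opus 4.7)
The plan is to combine the multivariate Stein method for normal approximation with Malliavin calculus on the Wiener space, following the Nourdin--Peccati program. For a test function $f:\RR^m\to\RR$, the multivariate Stein equation associated with $\mcN(0,\Sigma)$ admits a solution $g = U_\Sigma f$ whose second derivatives obey $\|\partial_{ij}^2 g\|_\infty \lesssim \sqrt{\|\Sigma\|_{\op}}\,\|\Sigma^{-1}\|_{\op}\,\|f\|_{\mathrm{Lip}}$ (enough for the $W_1$ estimate) and $\|\partial_{ij}^2 g\|_\infty \lesssim \|f\|_{C_2}$ without any dependence on $\Sigma$ (enough for the $d_2$ estimate), and satisfies the identity
$$\EE[f(\widetilde F)] - \EE[f(\widetilde{\mfN})] = \EE\Big[\sum_{i,j}\Sigma_{ij}\partial_{ij}^2 g(\widetilde F) - \sum_i \widetilde F_i\,\partial_i g(\widetilde F)\Big].$$
Thus both metrics are controlled by a quantity of the form $\sum_{i,j}\|\partial_{ij}^2 g\|_\infty\cdot\EE\,|\Sigma_{ij} - R_{ij}|$ for a random variable $R_{ij}$ to be identified, and the two distance bounds in the theorem will differ only through the cost of solving the Stein equation.

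Next, I would exploit the chaos structure of each $F_i$. Writing $F_i = \delta(-DL^{-1}F_i)$, with $\delta$ the divergence operator and $L^{-1}$ the pseudo-inverse of the Ornstein--Uhlenbeck generator, Malliavin integration by parts gives
$$\EE\big[F_i\,\partial_i g(\widetilde F)\big] = \sum_j \EE\big[\langle DF_j,\,-DL^{-1}F_i\rangle\,\partial_{ij}^2 g(\widetilde F)\big],$$
so $R_{ij} = \langle DF_j, -DL^{-1}F_i\rangle$. Because $F_i$ lives in the $d_i$-th Wiener chaos, $-DL^{-1}F_i = d_i^{-1}DF_i$, and the problem reduces to bounding $\EE\,|\Sigma_{ij} - d_i^{-1}\langle DF_i, DF_j\rangle|$. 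For the diagonal $i=j$, Cauchy--Schwarz together with the classical Nualart--Peccati variance identity
$$\mathrm{Var}\big(d_i^{-1}\|DF_i\|^2\big) \leq \frac{d_i - 1}{3\,d_i}\,\big(\EE[F_i^4] - 3\,\EE[F_i^2]^2\big)$$
bounds the diagonal by the fourth-moment discrepancy of $F_i$, producing the $\sqrt{2d_i 4^{d_i}(\EE[F_i^4] - 3\EE[F_i^2]^2)}$ term of $\Delta(F)$.

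The main obstacle is the off-diagonal contribution with $d_i \neq d_j$. Here the product formula for multiple Wiener--It\^o integrals expands $\langle DF_i, DF_j\rangle$ as a sum of multiple integrals of orders $d_i + d_j - 2p$ for $p = 1,\dots,\min(d_i,d_j)$. Because $d_i \neq d_j$, none of these orders is zero, so $\EE[\langle DF_i, DF_j\rangle] = 0 = \Sigma_{ij}$ and one only needs to bound the variance of these mixed integrals. Using orthogonality of chaoses and the standard contraction inequalities of Wiener chaos, each mixed contraction norm $\|f_i \otimes_p f_i\|$ or $\|f_j \otimes_p f_j\|$ is controlled by the corresponding univariate fourth-moment discrepancy, yielding the combinatorial factors $\sqrt{2\,\EE[F_j^2]}\,(\cdots)^{1/4}$ and $d_j 2^{d_j}\sqrt{2(d_i+d_j)!(\cdots)}$ appearing in $\Delta(F)$. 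The delicate part is bookkeeping the contraction orders generated by the product formula and extracting the sharp constants $(d_i+d_j)!$ and $2^{d_j}$; the remaining bound on $W_1$ versus $d_2$ falls out simply by choosing whether to pay the $\sqrt{\|\Sigma\|_{\op}}\|\Sigma^{-1}\|_{\op}$ factor from the Stein equation solution or not.
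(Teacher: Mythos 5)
The paper does not prove this statement at all: it is quoted verbatim as \cite[Theorem~1.5]{salim2011gaussian}, so there is no internal proof to compare against. Your outline is a faithful reconstruction of the Malliavin--Stein argument used in that reference --- the multivariate Stein equation with the two different Hessian bounds (explaining the $\sqrt{\|\Sigma\|_{\op}}\|\Sigma^{-1}\|_{\op}$ factor in $W_1$ versus its absence in $d_2$), the identity $-DL^{-1}F_i = d_i^{-1}DF_i$ on the $d_i$-th chaos, the Nualart--Peccati variance bound on the diagonal, and the observation that $d_i \neq d_j$ kills the zeroth-order term in the product formula so that only contraction norms (each controlled by a univariate fourth-moment discrepancy) survive off the diagonal --- and all of these steps are correct, with only the explicit constant bookkeeping left unverified.
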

	\begin{rmk}
		We remark that~\cref{thm:multifourthmoment} is more general and applies to Hermite polynomials, or iterated stochastic integrals, on the Wiener space. Since multi-linear and homogeneous polynomials are also Hermite polynomials we chose to state the theorem this way, with slightly worse but simpler constants, as best befits our needs. 
	\end{rmk}
	The fourth moment theorem (\cref{thm:multifourthmoment}) deals with a Gaussian approximation of polynomials with Gaussian variables. However, in our problem, $T(H,G)$ is a polynomial in Rademacher random variables. We are thus interested in studying the law of $F(X)$ when $F:\RR^n \to \RR$ is some multi-linear polynomial and $X \sim \mathrm{Rad}(\frac{1}{2})^{\otimes n}$ is an $n$-dimensional random vector with independent Rademacher entries. As it turns out, the Gaussian is universal for this problem, in the sense that if $F(\mfN)$ is approximately Gaussian, then the same is also true for $F(X)$, c.f.~\cite{nourdin2010invariance}. The converse is not true in general though, and one can find a polynomial $F$, such that $F(X)$ can be well approximated by a Gaussian, while $F(\mfN)$ cannot. It is precisely these kinds of polynomials which we shall need to handle. To distinguish between the different cases, our main tool will be the \textit{Invariance Principle} of Mossel, O’Donnell, and  Oleszkiewicz~\cite{mossel2010noise}. 
	
	Before stating the theorem, for a multi-linear polynomial $F:\RR^n \to \RR$, $F(x) = \sum\limits_{|I| \leq d} \alpha_I x^I$, we define the \emph{influence} of a subset $J \subseteq [n]$ of variables, as
	\begin{equation} \label{eq:dinf}
		\infl_J(F) = \sum\limits_{J \subseteq I} \alpha_I^2.
	\end{equation}
	If $J =\{i\}$ is a singleton, we will also write $\infl_J(F) =  \infl_i(F).$ The Invariance Principle then says that as long as there are no influential variables, $F(\mfN)$ and $F(X)$ behave roughly the same.
	\begin{thm} \label{thm:invariance}\textnormal{\cite[Corollary 11.68]{odonnell2014analysis}}
		Let $F:\RR^n \to \RR$ be a multi-linear polynomial of degree $d$ and let $\mfN \sim \mcN(0,\mathrm{I}_n)$ and $X \sim \mathrm{Rad}(\frac{1}{2})^{\otimes n}$. Then, if $\mathrm{Var}(F(\mfN)) = 1$,
		$$W_1(F(X),F(\mfN)) \lesssim 2^d\left(\max\limits_{i\in [n]}\infl_i(F)\right)^{\frac{1}{4}}.$$
	\end{thm}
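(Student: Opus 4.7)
The theorem is a standard invariance principle in the spirit of Mossel--O'Donnell--Oleszkiewicz, and the natural route is the Lindeberg replacement method combined with hypercontractivity and a smoothing argument. The plan is to introduce hybrid vectors $Z^{(k)} = (\mfN_1, \dots, \mfN_k, X_{k+1}, \dots, X_n)$ for $k=0,\dots,n$, so that $Z^{(0)} = X$ and $Z^{(n)} = \mfN$, and to estimate
\begin{equation*}
\EE[g(F(\mfN))] - \EE[g(F(X))] = \sum_{k=1}^n \left(\EE[g(F(Z^{(k)}))] - \EE[g(F(Z^{(k-1)}))]\right)
\end{equation*}
for a sufficiently smooth test function $g$, and then mollify to convert to a Wasserstein bound.

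For the per-swap estimate, multi-linearity gives the decomposition $F(z) = F^{(k)}(z) + z_k \cdot D_k F(z)$, where neither $F^{(k)}$ nor $D_k F$ depends on $z_k$. Taylor expanding $g$ around $F^{(k)}$ up to fourth order and integrating out the $k$-th coordinate, the zeroth through third order contributions agree between $X_k$ and $\mfN_k$ since both have mean $0$, variance $1$, and vanishing third moment. This matching of third moments for \emph{symmetric} Rademacher variables is precisely what will eventually yield the exponent $\tfrac14$ rather than the $\tfrac16$ one would extract from a merely cubic expansion. The residual fourth-order term is of size $\lesssim \|g^{(4)}\|_\infty \, \EE[(D_k F)^4]$. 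Hypercontractivity (Bonami--Beckner for Rademacher, Nelson for Gaussian) applied to $D_k F$, which has degree at most $d-1$, then yields $\EE[(D_k F)^4] \lesssim 9^d \, \infl_k(F)^2$, and summing over $k$ while using $\sum_k \infl_k(F) \leq d \, \Var(F(\mfN)) = d$ gives, for smooth $g$,
\begin{equation*}
|\EE[g(F(X))] - \EE[g(F(\mfN))]| \lesssim 9^d \, d \, \|g^{(4)}\|_\infty \, \iota, \qquad \iota := \max_k \infl_k(F).
\end{equation*}

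To pass from smooth test functions to the $W_1$ metric, take an arbitrary $1$-Lipschitz $f$, convolve with a Gaussian mollifier of width $\epsilon$, and exploit $\|f - f_\epsilon\|_\infty \lesssim \epsilon$ together with $\|f_\epsilon^{(4)}\|_\infty \lesssim \epsilon^{-3}$. Combining these estimates with the smooth bound above gives $|\EE[f(F(X))] - \EE[f(F(\mfN))]| \lesssim \epsilon + 9^d d \iota \epsilon^{-3}$, and balancing the two terms at $\epsilon \sim (9^d d \iota)^{1/4}$ yields the claimed estimate $\lesssim 2^d \iota^{1/4}$, using $3^{d/2} \leq 2^d$ and absorbing polynomial factors in $d$ into an absolute constant. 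The main technical obstacle is establishing the degree-dependent hypercontractive bound $\|D_k F\|_4 \lesssim 3^{d/2} \|D_k F\|_2$ uniformly over independent symmetric Rademacher and Gaussian inputs, which requires tensorizing the two-point (respectively, Gaussian) hypercontractive inequality; the Lindeberg swap and the mollification are then routine, provided one tracks the exponents carefully to land on $\iota^{1/4}$ at the end.
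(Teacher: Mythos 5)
The paper does not prove this statement; it is quoted directly as \cite[Corollary 11.68]{odonnell2014analysis}, whose proof is exactly the Lindeberg-swap-plus-hypercontractivity-plus-mollification argument you outline (basic invariance principle for $C^4$ test functions, then smoothing to reach Lipschitz test functions and optimizing $\epsilon \sim (9^d d\,\iota)^{1/4}$). Your reconstruction is correct and matches the cited source's approach, including the key points: third-moment matching of $\Rad(\frac12)$ and $\mcN(0,1)$, $\EE[(D_kF)^2]=\infl_k(F)$ under the mixed product measure, tensorized $(2,4)$-hypercontractivity for degree $\le d-1$, and $\sum_k \infl_k(F)\le d\,\mathrm{Var}(F(\mfN))$.
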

	\subsection{Monochromatic Counts as Multi-linear Polynomials} \label{sec:gpoly}We shall now show how the monochromatic count $T(H,G)$ can be represented as a multi-linear polynomial.
	To see this, for a fixed \emph{connected} graph $H$ on vertex set $V(H)$, and $r:= v(H)$ we define the $H$-indicator polynomial,  $\hind_H:\RR^{r} \to \RR$. For the definition of $\hind_H$, we index the coordinates of $\RR^{r}$ by $V(H)$, and write
	$$\hind_H(x) := \prod\limits_{v\in V(H)}(x_v +1) + (-1)^{r}\prod\limits_{v\in V(H)}(x_v -1) = 2 \sum\limits_{i=0}^{\lfloor \frac{r}{2} \rfloor}\sum\limits_{\substack{U \subset V(H)\\|U| = 2i}}\prod\limits_{u \in U}x_u.$$
	The equality follows by noticing that all odd-degree monomials must vanish.
	
	We now record several facts concerning the polynomial $\hind_H$. First, it is a multi-linear polynomial of degree at most $r$, and its $m$-homogeneous component is given by 
	\begin{equation} \label{eq:homogenousindicator}
		\hind_{H,m}(x) = \begin{cases}
			2\sum\limits_{\substack{U \subset V(H)\\|U| = m}}\prod\limits_{u \in U}x_u & m \equiv 0 \pmod 2 \\
			0 & m \equiv 1 \pmod 2.
		\end{cases},
	\end{equation}
	Second, it is indeed an indicator in the following sense: when restricted to $2$-colorings of $H$, encoded by $x \in \{\pm 1\}^{r}$, we have $\hind_H(x) = 2^{r}$ if $x = \mathbf{1} = (1,\dots,1)$ or $x=-\mathbf{1}$, and $\hind_H(x) = 0$ otherwise. Finally, $\hind_H$ does not depend on the actual graph structure of $H$, only on the number of vertices of $H$, and is invariant to permutations of its coordinates. 
	We note here that if $X \sim \mathrm{Rad}(\frac{1}{2})^{\otimes r}$, then 
	\begin{equation} \label{eq:polyexp}
		\EE[\hind_H(X)] = \hind_{H,0} = 2.
	\end{equation}
	\begin{ex}
		Suppose that $H = \triangle$ is a triangle on the vertex set $\{u,v,w\}$. In that case 
		\begin{equation} \label{eq:triexample}
			\hind_\triangle(x_u,x_v,x_w) = 2(x_ux_v + x_ux_w + x_vx_w + 1).
		\end{equation}
		It is easy to see that $\hind_\triangle (\ind) = \hind_\triangle (-\ind) = 8$, and that $\hind_\triangle (-1,1,1) = 0$, with the same being true for all other $2$-colorings.
	\end{ex}
	With this definition of $\hind_H$, we can now rewrite \eqref{d:thgn} as,
	\begin{equation} \label{eq:ttopoly}
		T(H, G) := \frac{1}{2^{r}|\Aut(H)|} \sum_{\bs \in V(G)_{r}} \hind_H(x\vert_{\bs}) a_{H, \bs}(G),
	\end{equation}
	where $x\vert_{\bs} = (x_v)_{v \in \bs}$. Equivalently, if $\mathcal{H}(G)$ stands for the set of all (unlabeled) copies of $H$ inside $G$ and $x \in \{\pm 1\}^{v(G)}$ is a $2$-coloring of $V(G)$ we can also write $T(H, G)$ in a more standard polynomial form as
	$$\MP(x) = \frac{1}{2^{r}}\sum\limits_{\bs \in \mathcal{H}(G)} \hind_H(x\vert_{\bs}).$$
	Note that the choice of ordering in $\bs$ does not matter, since $\hind_H$ is invariant to permutations of its coordinates.
	In light of~\cref{thm:invariance}, our next goal is to understand the influences of $\MP(x)$. Since the entries of $x$ are indexed by $V(G)$, we can identify index sets of $\MP$ and subsets $\rb \subset V(G)$ and write in multi-linear form, 
	$$\MP(x) = \sum\limits_{\rb \in {\binom{V(G)}{\le r}}} \alpha_{\rb}x^\rb.$$ Let us now express $\alpha_{\rb}$ in terms of the graph structure of $G$. 
	Looking at \eqref{eq:homogenousindicator}, it is clear that when $|\rb|$ is even, $x^\rb$ appears in the sum precisely once for every copy of $H$ in $G$ that includes $\rb$. In other words, $\alpha_{\rb} = \frac{1}{2^{r-1}}D_{\rb}(H,G)$, as in \eqref{d:dw}. When $|\rb|$ is odd then $\alpha_{\rb} = 0$, and we obtain the expression,
	\begin{equation} \label{eq:countingmultirep}
		\MP(x) = \frac{1}{2^{r-1}}\sum\limits_{i=0}^{\lfloor r/2\rfloor}\sum\limits_{\rb \in \binom{V(G)}{2i}} D_{\rb}(H,G)x^\rb.
	\end{equation}
	We can now make the connection between the influence of a single variable, as defined by \eqref{eq:dinf}, and the influence of a vertex, as in \eqref{d:dw}.
	\begin{lemma} \label{lem:countinginfluence}
		With the above notation, for any $v \in V(G)$,
		$$\infl_v(\MP) \leq  \frac{1}{2^{r-1}}D^2_v(H,G).$$
	\end{lemma}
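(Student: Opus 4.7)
The plan is to unpack the influence $\infl_v(\MP)$ directly from the multi-linear expansion in \eqref{eq:countingmultirep} and then bound the resulting sum by a double-counting argument that reduces every $D_{\rb}$ (with $v \in \rb$) to $D_v$. Concretely, reading off the coefficients from \eqref{eq:countingmultirep}, one has $\alpha_{\rb} = 2^{-(r-1)} D_{\rb}(H,G)$ when $|\rb|$ is even and $\alpha_{\rb} = 0$ when $|\rb|$ is odd. Substituting into the definition \eqref{eq:dinf} of the influence yields
\begin{equation*}
\infl_v(\MP) \;=\; \sum_{\rb \ni v} \alpha_{\rb}^2 \;=\; \frac{1}{2^{2(r-1)}} \sum_{\substack{\rb \ni v \\ |\rb| \text{ even}}} D_{\rb}(H,G)^2.
\end{equation*}

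Next, I would use the trivial monotonicity $D_{\rb}(H,G) \le D_v(H,G)$ whenever $v \in \rb$, which follows immediately from \eqref{d:dw} since every $\s \in V(G)_r$ containing $\rb$ in particular contains $v$. Factoring one $D_v$ out gives
\begin{equation*}
\sum_{\substack{\rb \ni v \\ |\rb| \text{ even}}} D_{\rb}(H,G)^2 \;\le\; D_v(H,G) \cdot \sum_{\substack{\rb \ni v \\ |\rb| \text{ even}}} D_{\rb}(H,G).
\end{equation*}

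To finish, I would evaluate $\sum_{\rb \ni v,\, |\rb| \text{ even}} D_{\rb}(H,G)$ by swapping the order of summation against $\s$: every copy $\s$ of $H$ containing $v$ contributes to exactly those $\rb$ with $v \in \rb \subseteq \s$ of even size, and there are precisely $2^{r-2}$ such $\rb$ (picking an odd number of companions for $v$ from the remaining $r-1$ vertices of $\s$). Thus the sum equals $2^{r-2} D_v(H,G)$, and plugging back yields
\begin{equation*}
\infl_v(\MP) \;\le\; \frac{1}{2^{2(r-1)}} \cdot 2^{r-2} \cdot D_v(H,G)^2 \;=\; \frac{1}{2^{r}} D_v(H,G)^2 \;\le\; \frac{1}{2^{r-1}} D_v(H,G)^2,
\end{equation*}
as claimed. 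There is no real obstacle here; the only bookkeeping point requiring care is that the ordered nature of $\s \in V(G_n)_r$ versus the unordered index set $\rb \subseteq V(G)$ is correctly tracked in the double-counting step, which is why the exponent comes out to $r-2$ rather than $r-1$.
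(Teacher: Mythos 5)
Your proposal is correct and follows essentially the same route as the paper: read off $\alpha_{\rb}=2^{-(r-1)}D_{\rb}$ from \eqref{eq:countingmultirep}, use $D_{\rb}\le D_v$ for $v\in\rb$, and double-count $\sum_{\rb\ni v,\,|\rb|\text{ even}}D_{\rb}=2^{r-2}D_v$ (the paper does this size-by-size via $\binom{r-1}{m-1}$ and then sums the binomial coefficients, which is the same computation). The resulting bound $D_v^2/2^{r}$ matches the paper's intermediate constant before relaxing to $2^{-(r-1)}$.
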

	\begin{proof}
		From the representation in \eqref{eq:countingmultirep} we get,
		$$\infl_v(\MP) = \frac{1}{4^{r-1}}\sum\limits_{\substack{v \in \rb\\|\rb| \equiv 0 \text{ mod } 2}} D^2_{\rb}(H,G).$$
		By inclusion, it is also clear that for any $\rb \in V(G)$ that contains $v$, $D_{\rb}(H,G)\leq D_{v}(H,G)$. Moreover, for any $m \leq r$,
		$$\sum\limits_{\substack{v \in \rb\\|\rb| = m}} D_{\rb}(H,G) = \binom{r-1}{m-1}D_v(H,G).$$ 
		The coefficient $\binom{r-1}{m-1}$ counts the number of ways $v$ can be extended to a subset of size $m$ inside a copy of $H$. Combining the above observations, 
		$$\infl_v(\MP) \leq \frac{D_v(H,G)}{4^{r-1}}\sum\limits_{i=1}^{r/2}\sum\limits_{\substack{v \in \rb\\|\rb| = 2i}} D_{\rb}(H,G) = \frac{D^2_v(H,G)}{4^{r-1}}\sum\limits_{i=1}^{r/2} \binom{r-1}{2i-1} \leq \frac{D^2_v(H,G)}{2^{r-1}}.$$
	\end{proof}
	\section{A Fourth Moment Theorem for Monochromatic Counts}
	In this section, we prove our main quantitative convergence results, which show that for graphs with no influential vertices, the fourth-moment condition suffices for a central limit theorem to hold for $Z(H,G_n)$. Let  $F:\RR^n \to \RR$, $\mfN \sim \mcN(0,\mathrm{I}_n)$, and write $\widetilde{F} = \frac{F - \EE\left[F[\mfN]\right]}{\sqrt{\mathrm{Var}(F(\mfN))}}$, for the normalized version of the function $F$.
	\begin{thm} \label{thm:mainquant}
		Let $H$ be a fixed connected $r$-vertex graph and $G$ another graph. For $Z(H,G)$ defined in~\eqref{d:zhgn}, $\mfN \sim \mcN(0,\mathrm{I}_{v(G)})$, $X \sim \mathrm{Rad}(\frac{1}{2})^{\otimes r}$, and $\mcZ \sim \mcN(0,1)$,
		set $$m_4(H, G) := \sqrt{\EE\left[\widetilde\MP(\mfN)^4\right] - 3} +\left(\EE\left[\widetilde\MP(\mfN)^4\right] - 3\right)^{\frac{1}{4}},$$ where $\MP$ is as in \eqref{eq:countingmultirep}.
		Then, 
		\begin{align*}
			W_1(Z(H,G), \mcZ) &\lesssim \max\limits_{v\in V(G)}\left[\left(\frac{D_v^2(H,G)}{{\mathrm{Var}(\MP(X))}}\right)^{\frac{1}{4}} + D_{v}(H,G)m_4(H, G)\right],\\
			d_2(Z(H,G),\mcZ) &\lesssim \max\limits_{v\in V(G)}\left[\left(\frac{D_v^2(H,G)}{{\mathrm{Var}(\MP(X))}}\right)^{\frac{1}{4}} + m_4(H, G)\right].
		\end{align*}
	\end{thm}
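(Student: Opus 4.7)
The approach is a two-step comparison. Writing $Z(H,G) = \widetilde{\MP}(X)$ with $\widetilde{\MP}$ the centered, normalized version of the polynomial in \eqref{eq:countingmultirep}, we insert the hybrid $\widetilde{\MP}(\mfN)$ and use the triangle inequality
\begin{equation*}
d(Z(H,G), \mcZ) \;\le\; d\bigl(\widetilde{\MP}(X),\widetilde{\MP}(\mfN)\bigr) \;+\; d\bigl(\widetilde{\MP}(\mfN),\mcZ\bigr),
\end{equation*}
for both $d = W_1$ and $d = d_2$ (using $d_2 \le W_1$ for the first term in the $d_2$ bound). The first term is controlled by the Invariance Principle (Theorem~\ref{thm:invariance}), which replaces Rademachers by Gaussians up to a penalty governed by variable influences. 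The second term is controlled by a fourth-moment comparison on the Wiener side, extending Theorem~\ref{thm:multifourthmoment} as promised by the authors' Theorem~\ref{thm:fourthmoment}. The key observation justifying the swap is that monomials of distinct even degrees are orthogonal under \emph{both} product measures, so $\Var(\MP(X)) = \Var(\MP(\mfN))$ and the two normalizations agree.

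\textbf{Gaussian side.} Decompose $\MP$ into its chaos (homogeneous) components $\MP = \MP_0 + \MP_2 + \MP_4 + \cdots + \MP_{2\lfloor r/2\rfloor}$, using that \eqref{eq:countingmultirep} contains only even-degree terms. Apply the multivariate version of the fourth-moment theorem (Theorem~\ref{thm:multifourthmoment}, or the authors' extension Theorem~\ref{thm:fourthmoment}, valid because the coefficients $D_{\rb}(H,G)/2^{r-1}$ are nonnegative) to the vector $(\MP_m(\mfN))_{m \geq 2, \text{ even}}$; summing the components (a bounded-Lipschitz operation since $r$ is fixed) and normalizing by $\sigma = \sqrt{\Var(\MP(\mfN))}$ yields
\begin{equation*}
d_2\bigl(\widetilde{\MP}(\mfN),\mcZ\bigr) \;\lesssim\; m_4(H,G), \qquad W_1\bigl(\widetilde{\MP}(\mfN),\mcZ\bigr) \;\lesssim\; \max_{v \in V(G)} D_v(H,G)\cdot m_4(H,G).
\end{equation*}
The extra factor in the $W_1$ bound arises because $W_1$ is scale-sensitive: the absolute-scale fourth-moment discrepancy in $\Delta(F)$ is roughly $\sigma^2 m_4$, which upon dividing by $\sigma$ leaves a prefactor that one must bound. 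Using that the degree-$m$ coefficients of $\MP$ are sums $D_{\rb}(H,G)$ with $v \in \rb$, each bounded by $D_v(H,G)$, gives the claimed $\max_v D_v(H,G)$ scaling via the $\sqrt{\|\Sigma\|_{\op}}\|\Sigma^{-1}\|_{\op}$ factor in Theorem~\ref{thm:multifourthmoment}.

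\textbf{Rademacher side and combination.} Since $\widetilde{\MP}$ is multilinear of fixed degree at most $r$ and has unit Gaussian variance, the Invariance Principle gives
\begin{equation*}
W_1\bigl(\widetilde{\MP}(X),\widetilde{\MP}(\mfN)\bigr) \;\lesssim\; 2^{r}\bigl(\max_{v \in V(G)}\infl_v(\widetilde{\MP})\bigr)^{1/4}.
\end{equation*}
Lemma~\ref{lem:countinginfluence} upgrades this to a vertex-influence bound: $\infl_v(\widetilde{\MP}) = \infl_v(\MP)/\sigma^2 \lesssim D_v^2(H,G)/\Var(\MP(X))$. Plugging both bounds into the triangle inequality yields the two claims of the theorem after taking the maximum over $v$.

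\textbf{Main obstacle.} The core technical step is the Gaussian-side bound in step two. The classical fourth-moment theorem applies to a \emph{single} Wiener chaos, whereas $\MP(\mfN)$ is a sum over chaoses of every even degree up to $r$. The multivariate extension (Theorem~\ref{thm:multifourthmoment}) controls the joint vector but introduces the $\sqrt{\|\Sigma\|_{\op}}\|\Sigma^{-1}\|_{\op}$ penalty that could be uncontrolled if one chaos has variance vanishing compared to another. Overcoming this requires the authors' refinement (Theorem~\ref{thm:fourthmoment}) which exploits the nonnegativity of the coefficients in \eqref{eq:countingmultirep} to prevent destructive cancellation and to express the error purely in terms of the normalized gap $\EE[\widetilde{\MP}(\mfN)^4] - 3$. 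Once that refinement is in hand, the remainder of the argument is a careful bookkeeping exercise tracking how $\sigma$ enters the $W_1$ and $d_2$ scalings.
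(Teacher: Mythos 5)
Your proposal follows essentially the same route as the paper's proof: the triangle inequality through the Gaussian hybrid $\widetilde{\MP}(\mfN)$, the Invariance Principle combined with Lemma~\ref{lem:countinginfluence} for the Rademacher-to-Gaussian step, and the non-homogeneous fourth-moment theorem (Proposition~\ref{thm:fourthmoment} via the positivity of the coefficients, with the chaos-variance ratio $\max_m \EE[\MPm(\mfN)^2]^{-1}\lesssim \max_v D_v(H,G)$ supplying the extra factor in the $W_1$ bound, exactly as in Lemmas~\ref{lem:allpos}, \ref{lem:homogenvar}, and~\ref{lem:polyCLT}). The argument is correct as outlined.
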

	Let us make some remarks concerning~\cref{thm:mainquant}. First, if a sequence of graphs $\{G_n\}_{n\geq 0}$ has no influential vertices, then, by definition, $$\max\limits_{v\in V(G_n)}\left(\frac{D_v^2(H,G_n)}{{\mathrm{Var}(\MP(X))}}\right)^{\frac{1}{4}} \xrightarrow{n \to \infty} 0.$$
	Recall now that convergence in $d_2$ implies convergence in distribution. Hence for $Z(H,G_n) \xrightarrow[\text{in law}]{n \to \infty} \mcZ$ is it sufficient (and also necessary) that $m_4(G_n,H)\xrightarrow{n \to \infty} 0.$ 
	The fourth-moment discrepancy $m_4$ is defined in terms of the Gaussian measure. However, when there are no influential vertices, by~\cref{thm:invariance}, one could replace the Gaussian variables with Rademacher ones, expressing the fourth moment of $Z(H,G)$ and arrive asymptotically at the same expression.

	The proof of~\cref{thm:mainquant} will be conducted in several steps. We shall first prove a central limit theorem for the polynomial $\MP$ with Gaussian variables and then use the Invariance Principle from~\cref{thm:invariance} to replace the Gaussian variables with Rademacher ones.
	
	\subsection{Normal Approximation for non-homogeneous polynomials}
	We begin by extending the fourth-moment theorem to non-homogeneous polynomials. For a multi-linear polynomial of degree $d$, $F: \RR^n \to \RR$, given by $F(x) = \sum_{I\in {\binom{[n]}{\le d}}}\alpha_I x^I$,  denote by $F_m$ its degree $m$ homogeneous component, whenever $m \leq d$. In other words, $F_m(x) = \sum_{I\in \binom{[n]}{m}}\alpha_I x^I$.
	We now derive a necessary condition on multi-linear polynomials to ensure that the fourth-moment theorem holds.
	
	\begin{prop} \label{thm:fourthmoment}
		Let $F:\RR^n \to \RR$ be a degree $d$ multi-linear polynomial, and for $m \leq d$, let $F_m$ be its degree $m$ homogeneous component. Further, let $\mfN \sim \mcN(0, \mathrm{I}_n)$, and suppose that the following conditions are met:
		\begin{itemize}
			\item $\EE\left[F(\mfN)\right] = 0$ and $\EE\left[F(\mfN)^2\right] = 1$. 
			\item For every distinct $m,m',m'',m''' \leq d$, we have the following inequalities 
			\begin{align} \label{eq:conds}
				\EE&\left[F_{m}(\mfN)F^3_{m'}(\mfN)\right] \geq 0, \nonumber\\
				\EE&\left[F_{m}(\mfN)F_{m'}(\mfN)F^2_{m''}(\mfN)\right]\geq 0,\nonumber\\
				\EE&\left[F_{m}(\mfN)F_{m'}(\mfN)F_{m''}(\mfN)F_{m'''}(\mfN)\right]\geq 0.
			\end{align}
		\end{itemize}
		Then, if $\mcZ\sim \mcN(0,1),$
		\begin{align*}
			W_1(F(\mfN), \mcZ) &\leq C_d\cdot \max\limits_{m \leq d}\EE\left[F^2_m(\mfN)\right]^{-1} \cdot \left(\sqrt{\EE\left[F(\mfN)^4\right] - 3} +\left(\EE\left[F(\mfN)^4\right] - 3\right)^{\frac{1}{4}}\right),\\
			d_2(F(\mfN), \mcZ) &\leq C_d\left(\sqrt{\EE\left[F(\mfN)^4\right] - 3} +\left(\EE\left[F(\mfN)^4\right] - 3\right)^{\frac{1}{4}}\right),
		\end{align*}
		where $C_d > 0$ depends only on $d$.
	\end{prop}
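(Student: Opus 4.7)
The plan is to reduce to the multivariate fourth-moment theorem (\cref{thm:multifourthmoment}) applied to the vector of homogeneous components of $F$, and then pass to a one-dimensional bound via Lipschitz contraction. Since $\EE[F(\mfN)] = 0$ we have $F_0 = 0$, so $F(\mfN) = \sum_{m=1}^d F_m(\mfN) = \mathbf{1}^\top \widetilde F$, where $\widetilde F := (F_1(\mfN),\dots,F_d(\mfN))$. Set $\Sigma := \mathrm{diag}(\EE[F_m(\mfN)^2])$; orthogonality of distinct Wiener chaoses together with $\EE[F(\mfN)^2] = 1$ yields $\mathrm{tr}(\Sigma) = 1$, so for $\widetilde{\mfN} \sim \mcN(0, \Sigma)$ we have $\mathbf{1}^\top \widetilde{\mfN} \sim \mcN(0, 1) \sim \mcZ$. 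Since $x \mapsto \mathbf{1}^\top x$ is $\sqrt d$-Lipschitz, and post-composition with it scales the $C^2$ seminorm of a test function by at most $d$, Kantorovich--Rubinstein duality gives $W_1(F(\mfN),\mcZ) \leq \sqrt d\, W_1(\widetilde F, \widetilde{\mfN})$ and $d_2(F(\mfN),\mcZ) \leq d\, d_2(\widetilde F, \widetilde{\mfN})$. It thus suffices to apply \cref{thm:multifourthmoment} to $\widetilde F$ and bound the quantity $\Delta(F)$.

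The main combinatorial step is to show that each $\delta_m := \EE[F_m(\mfN)^4] - 3 \EE[F_m(\mfN)^2]^2$ is controlled by the scalar fourth-moment discrepancy $\EE[F(\mfN)^4]-3$. Expanding $\EE[F^4]$ via the multinomial theorem, grouping terms by the partition type of the multiset $\{m_1, m_2, m_3, m_4\}$, and subtracting $3 \EE[F^2]^2 = 3 \sum_m \EE[F_m^2]^2 + 3 \sum_{m \neq m'} \EE[F_m^2] \EE[F_{m'}^2]$ (which uses orthogonality), one obtains
\begin{align*}
\EE[F^4] - 3 &= \sum_m \delta_m + 4\sum_{m\neq m'}\EE[F_m^3 F_{m'}] + 3\sum_{m\neq m'}\bigl(\EE[F_m^2 F_{m'}^2] - \EE[F_m^2]\EE[F_{m'}^2]\bigr)\\
&\quad + 12 \sum \EE[F_m^2 F_{m'} F_{m''}] + 24 \sum \EE[F_{m_1} F_{m_2} F_{m_3} F_{m_4}],
\end{align*}
where the last two sums range over pairwise distinct indices. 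The three ``genuinely mixed'' sums are nonneg by hypothesis~\eqref{eq:conds}, and the $(2,2)$-type cross terms $\EE[F_m^2 F_{m'}^2] - \EE[F_m^2]\EE[F_{m'}^2]$ are also nonneg by a standard Wiener chaos product-formula argument (and can be verified directly by expanding in the multi-linear monomial basis, where the diagonal contributions already exceed $\EE[F_m^2]\EE[F_{m'}^2]$ by factors $3^{|I\cap J|}$). Finally, each $\delta_m \geq 0$ by the classical Nualart--Peccati fourth-moment inequality. Combining, $\max_m \delta_m \leq \sum_m \delta_m \leq \EE[F^4] - 3$.

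Substituting into the expression for $\Delta(F)$---a finite ($d$-dependent) sum of $\delta_m^{1/2}$ and $\delta_m^{1/4}$ terms weighted by $\EE[F_m^2] \leq 1$---yields $\Delta(F) \leq C_d \bigl(\sqrt{\EE[F^4]-3} + (\EE[F^4]-3)^{1/4}\bigr)$, from which the $d_2$ bound follows after the contraction step. For $W_1$ we additionally use $\sqrt{\|\Sigma\|_\op} \leq 1$ and $\|\Sigma^{-1}\|_\op = \max_m \EE[F_m^2]^{-1}$ from \cref{thm:multifourthmoment}. The main obstacle is the bookkeeping of the multinomial expansion and, in particular, establishing the nonnegativity of the $(2,2)$-type cross terms, which is not among the explicit hypotheses~\eqref{eq:conds} and must be supplied by a separate Wiener chaos/product-formula computation.
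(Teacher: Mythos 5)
Your proposal is correct and follows essentially the same route as the paper: reduce to the multivariate fourth-moment theorem for the vector $(F_1(\mfN),\dots,F_d(\mfN))$, control $\sum_m\bigl(\EE[F_m(\mfN)^4]-3\EE[F_m(\mfN)^2]^2\bigr)$ by $\EE[F(\mfN)^4]-3$ using the hypotheses \eqref{eq:conds} for the genuinely mixed terms together with positive correlation of squares of chaos elements for the $(2,2)$-type terms, and then contract via $x\mapsto \mathbf{1}^\top x$. The paper supplies the $(2,2)$ positivity by citing \cite[Theorem 1]{malicet2016squared}, and your product-formula justification is the standard proof of that fact; note only that your alternative ``diagonal contributions'' argument is incomplete as stated, since the off-diagonal monomials $\alpha_I\alpha_{I'}\beta_J\beta_{J'}\EE[\mfN^I\mfN^{I'}\mfN^J\mfN^{J'}]$ with $(I,J)\neq(I',J')$ need not be nonnegative, so the product-formula (or cited) route should be the one relied upon.
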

	\begin{proof} 
		The main idea of the proof is to use~\cref{thm:multifourthmoment}. To utilize the theorem, we shall need to bound the fourth moment of $F(\mfN)$ in terms of the fourth moment of its homogeneous components. Towards this, we first note that the assumption $\EE[F(\mfN)] = 0$ implies $F_0 = 0$, and so,
		\begin{align*}
			\EE\left[F(\mfN)^4\right] =  \EE\left[\left(\sum\limits_{m=1}^dF_m(\mfN)\right)^4\right] \geq  \sum\limits_{m=1}^d \EE\left[F_m(\mfN)^4\right] + 6\sum\limits_{m \neq m'}\EE\left[F_m(\mfN)^2 F_{m'}(\mfN)^2\right],
		\end{align*}
		where the inequality follows since all missing terms were assumed to be positive. Now, observe that as multi-linear and homogeneous polynomials, for every $m \neq m'$, $F_m$ and $F_{m'}$ are Hermite polynomials and belong to some Wiener chaos. By \cite[Theorem 1]{malicet2016squared}, their squares are positively correlated, $\EE\left[F_m(\mfN)^2 F_{m'}(\mfN)^2\right] \geq \EE\left[F_m(\mfN)^2\right]\EE\left[F_{m'}(\mfN)^2\right].$ Combining this inequality with the previous bound yields,
		\begin{align*}
			\EE\left[F(\mfN)^4\right] \geq  \sum\limits_{m=1}^d \EE\left[F_m(\mfN)^4\right] + 6\sum\limits_{m \neq m'}\EE\left[F_m(\mfN)^2\right]\EE\left[F_{m'}(\mfN)^2\right].
		\end{align*}
		On the other hand, it is readily seen that $\EE\left[F_m(\mfN)F_{m'}(\mfN)\right]=0$, whenever $m \neq m'$, and so $1 = \EE\left[F(\mfN)^2\right] = \sum\limits_{m=1}^d\EE\left[F^2_m(\mfN)\right].$ So,
		$$3 = 3\left(\sum\limits_{m=1}^d \EE\left[F_m(\mfN)^2\right]\right)^2=  3\sum\limits_{m=1}^d \EE\left[F_m(\mfN)^2\right]^2+ 6\sum\limits_{m \neq m'}\EE\left[F_m(\mfN)^2\right] \left[F_{m'}(\mfN)^2\right],$$
		and
		\begin{equation} \label{eq:fourthmomentbound}
			\sum\limits_{m=1}^d \left(\EE\left[F_m(\mfN)^4\right] - 3\EE\left[F_m(\mfN)^2\right]^2\right) \leq \EE\left[F(\mfN)^4\right] - 3.
		\end{equation}
		Now, in order to invoke~\cref{thm:multifourthmoment}, let $\widetilde{\mfN}$ be a $d$-dimensional Gaussian with independent coordinates such that $\EE\left[\widetilde \mfN\right] = 0$ and $\EE\left[\widetilde \mfN_m^2\right] = \EE\left[F_m(\mfN)^2\right]$. Set, $\widetilde{F} = \{F_m(\mfN)\}_{m=1}^d$, $m(F) := \max\limits_m\EE\left[F^2_m(\mfN)\right]^{-1}$ and observe  $\max\limits_{m \leq d}\EE\left[F^2_m(\mfN)\right]\leq \EE\left[F^2(\mfN)\right] =1$. Thus, according to~\cref{thm:multifourthmoment}, if $\Delta(F)$ is as in the theorem,
		\begin{align*}
			\Delta(F) \leq & \sum\limits_{m=1}^d \sqrt{2d4^d\left(\EE\left[F_m(\mfN)^4\right] - 3\EE\left[F_m(\mfN)^2\right]^2\right)}\\
			+ &\sum\limits_{m \neq m'} \sqrt{2\EE\left[F_{m'}(\mfN)^2\right]}\left(\EE\left[F_m(\mfN)^4\right] - 3\EE\left[F_m(\mfN)^2\right]^2\right)^{\frac{1}{4}} + d2^d\sqrt{2(2d)!\left(\EE\left[F_m(\mfN)^4\right] - 3\EE\left[F_m(\mfN)^2\right]^2\right)}\\
			\leq &\sqrt{2d^24^d\sum\limits_{m=1}^d\left(\EE\left[F_m(\mfN)^4\right] - 3\EE\left[F_m(\mfN)^2\right]^2\right)}
			+2d\left(\sum\limits_{i=m}^d \EE\left[F_m(\mfN)^4\right] - 3\EE\left[F_m(\mfN)^2\right]^2\right)^{\frac{1}{4}}\\
			\ \ \ \ \ \ \ \ \ &+ \sqrt{8^d(2d)!\sum\limits_{m=1}^d\left(\EE\left[F_m(\mfN)^4\right] - 3\EE\left[F_m(\mfN)^2\right]^2\right)}\\
			\leq &\sqrt{4\cdot8^d(2d)!\left(\EE\left[F(\mfN)^4\right] - 3\right)} + 2d\left(\EE\left[F(\mfN)^4\right] - 3\right)^{\frac{1}{4}}.
		\end{align*}
		The second inequality uses the elementary inequality $\sum_{i=1}^d \sqrt{x_i} \leq \sqrt{d\sum_{i=1}^dx_i}$, valid whenever $x_i \geq 0$.  The third inequality is \eqref{eq:fourthmomentbound}.
		
		To finish the proof, suppose that $(\widetilde{F}, \widetilde{\mfN})$ are coupled with the optimal $W_1$ coupling, and let $\bf{1}$ be the all ones vector. So, by~\cref{thm:multifourthmoment}, since $F = \sum\limits_{m=1}^dF_m$,
		\begin{align*}
			W_1(F(\mfN), \mcZ) &\leq \EE\left[\left|\langle \widetilde{F}, {\bf{1}}\rangle - \langle \widetilde{\mfN}, {\bf 1}\rangle \right|\right] = \EE\left[\left|\langle \widetilde{F} - \widetilde{\mfN}, {\bf{1}}\rangle \right|\right]\\
			&\leq \|{\bf 1}\|\cdot \EE\left[\|\widetilde{F} - \widetilde{\mfN}\|\right] = \sqrt{d} \cdot W_1(\widetilde F, \widetilde \mfN) \leq C_d \cdot m(F) \cdot \Delta(F).
		\end{align*}
		A similar argument works for $d_2$, by noting that if $\|h\|_{C_2}\leq 1$ for some $h: \RR \to \RR$, then $\widetilde h(x):= h(\langle {\bf{1}}, x))$, satisfies $\|\widetilde h\|_{C_2} \leq d.$ 
	\end{proof}
	We now demonstrate several examples of polynomials that satisfy the conditions in \eqref{eq:conds}.
	\begin{lemma} \label{lem:allpos}
		Let $F: \RR^n \to \RR$ be a multi-linear polynomial of degree $d$, given by $F(x) = \sum\limits_{I \in {\binom{[n]}{\le d}}}\alpha_I x^I$. Suppose that for every $I \in {\binom{[n]}{\le d}}$, $\alpha_I \geq 0$, (resp. $\alpha_I \leq 0$), then the inequalities in \eqref{eq:conds} hold.
	\end{lemma}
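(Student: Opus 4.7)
The plan is to expand each expectation in \eqref{eq:conds} as a linear combination of Gaussian moments of products of distinct variables, and to observe that both the coefficients and these moments are nonnegative.

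First, I would reduce to the case $\alpha_I \geq 0$. Each of the three expressions in \eqref{eq:conds} is a quartic form in $F$ (in the sense that replacing $F$ by $-F$ replaces every $F_m$ by $-F_m$, and every monomial on the left-hand side is a product of exactly four such factors). Hence the signs are unchanged under $F \mapsto -F$, which reduces the $\alpha_I \leq 0$ case to the $\alpha_I \geq 0$ case. Note that the decomposition $F = \sum_m F_m$ preserves signs: if $\alpha_I \geq 0$ for all $I$, then each homogeneous component $F_m(x) = \sum_{|I|=m}\alpha_I x^I$ also has only nonnegative coefficients.

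Next, I would expand a general quartic expectation. Fix degrees $m_1,m_2,m_3,m_4$ (not necessarily distinct), and write
\[
\EE\!\left[F_{m_1}(\mfN)F_{m_2}(\mfN)F_{m_3}(\mfN)F_{m_4}(\mfN)\right] = \sum_{(I_1,I_2,I_3,I_4)} \alpha_{I_1}\alpha_{I_2}\alpha_{I_3}\alpha_{I_4}\,\EE\!\left[\mfN^{I_1}\mfN^{I_2}\mfN^{I_3}\mfN^{I_4}\right],
\]
where $I_k$ ranges over index sets of size $m_k$. The coefficient $\alpha_{I_1}\alpha_{I_2}\alpha_{I_3}\alpha_{I_4}$ is nonnegative by the previous paragraph, so the whole argument reduces to showing that every such Gaussian expectation is nonnegative.

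The key computational step is then the following. For any four index sets $I_1,\dots,I_4 \subseteq [n]$, and for each coordinate $i \in [n]$, let $c_i = \#\{k : i \in I_k\} \in \{0,1,2,3,4\}$; then the product of multilinear monomials becomes $\mfN^{I_1}\mfN^{I_2}\mfN^{I_3}\mfN^{I_4} = \prod_i \mfN_i^{c_i}$. By the independence of the coordinates of $\mfN$ and the standard one-dimensional Gaussian moment formula,
\[
\EE\!\left[\prod_i \mfN_i^{c_i}\right] = \prod_i \EE\!\left[\mfN_i^{c_i}\right] = \prod_i (c_i-1)!!\cdot\mathbf{1}\{c_i \text{ even}\} \;\geq\; 0.
\]
Plugging this into the expansion gives $\EE[F_{m_1}F_{m_2}F_{m_3}F_{m_4}] \geq 0$ for arbitrary $m_1,\dots,m_4$, which specializes to all three inequalities in \eqref{eq:conds} by taking $(m_1,m_2,m_3,m_4)$ equal to $(m,m',m',m')$, $(m,m',m'',m'')$, and $(m,m',m'',m''')$ respectively.

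I do not anticipate any serious obstacle: the argument is purely a bookkeeping exercise once one notices that nonnegativity of coefficients is preserved under passing to homogeneous components and that even-degree Gaussian moments of distinct independent variables factor into nonnegative one-dimensional moments. The only minor subtlety to flag in the write-up is the symmetry argument that reduces $\alpha_I \leq 0$ to $\alpha_I \geq 0$, which relies on every expression in \eqref{eq:conds} being quartic in $F$.
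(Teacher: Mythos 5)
Your proposal is correct and follows essentially the same route as the paper: expand each expectation in \eqref{eq:conds} into monomials whose coefficients are products of four $\alpha_I$'s of equal sign, and observe that every Gaussian monomial expectation $\EE[\mfN^{I_1}\mfN^{I_2}\mfN^{I_3}\mfN^{I_4}]$ is nonnegative. You simply fill in the details (the factorization into one-dimensional moments and the $F\mapsto -F$ symmetry) that the paper leaves implicit.
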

	\begin{proof}
		The result follows from the following simple observation:    
		Let $I_1, I_2, I_3, I_4 \subset [n]$ be four sets of indices, not necessarily distinct, and let $\mfN \sim \mcN(0,\mathrm{I}_n)$. Then,
		$$\EE\left[\mfN^{I_1}\mfN^{I_2}\mfN^{I_3}\mfN^{I_4}\right]\geq 0.$$
		In other words, any monomial with Gaussian variables has a non-negative expectation. It is now enough to note that each of the expectations in \eqref{eq:conds} can be written as a sum of monomials in Gaussian variables. These monomials have positive coefficients since all coefficients in $F$ have equal signs and the claim follows.
	\end{proof}
	\begin{lemma}
		Let $F: \RR^n \to \RR$ be a quadratic multi-linear polynomial, $F(x) = F_1(x) + F_2(x)$. Then, the inequalities in \eqref{eq:conds} hold.
	\end{lemma}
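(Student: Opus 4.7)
The plan is to observe that when $d=2$, the hypotheses of \eqref{eq:conds} drastically simplify: the second and third inequalities require three and four pairwise distinct indices $m,m',m'',m'''$ drawn from $\{1,\ldots,d\} = \{1,2\}$, which is impossible. So those two conditions are vacuously satisfied, and the entire lemma reduces to verifying the first inequality $\EE[F_m(\mfN)F_{m'}(\mfN)^3] \ge 0$ for the two remaining distinct pairs $(m,m') \in \{(1,2),(2,1)\}$.

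For these two cases I would use a pure parity argument. Since $F_m$ is homogeneous of degree $m$, the product $F_m \cdot F_{m'}^3$ is a polynomial in $N_1,\ldots,N_n$ every monomial of which has total degree $m+3m'$. For $(m,m') = (1,2)$ this is $7$, and for $(2,1)$ this is $5$; both odd. Expanding $F_m(\mfN) F_{m'}(\mfN)^3$ as a sum of monomials $c\prod_i N_i^{k_i}$, the constraint $\sum_i k_i$ odd forces at least one exponent $k_i$ to be odd. Using independence of the coordinates of $\mfN$ and the fact that odd moments of a standard Gaussian vanish,
\[
\EE\Bigl[\prod_i N_i^{k_i}\Bigr] = \prod_i \EE[N_i^{k_i}] = 0.
\]
Summing over monomials gives $\EE[F_m(\mfN) F_{m'}(\mfN)^3] = 0 \ge 0$, completing the verification.

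I do not anticipate any genuine obstacle here: the degree count for $d=2$ makes the quartic and cubic-mixed conditions vacuous, and the surviving case is handled by the elementary observation that independent-Gaussian expectations of odd-total-degree monomials vanish. The only care needed is in the bookkeeping of which of the three inequalities survives and noting that $F_{m'}^3$ need not be multi-linear, so the argument must be phrased in terms of monomial degree in $N_1,\ldots,N_n$ rather than in terms of multi-linear structure.
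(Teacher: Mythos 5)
Your proof is correct and is essentially the paper's argument: the paper likewise observes that only the mixed terms $\EE[F_1(\mfN)F_2^3(\mfN)]$ and $\EE[F_1^3(\mfN)F_2(\mfN)]$ need to be checked, and that these vanish because the products have odd total degree and are therefore anti-symmetric, so their Gaussian expectations are zero. Your monomial-by-monomial version of the parity argument is just a more explicit rendering of the same observation.
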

	\begin{proof}
		It is enough to show that $\EE\left[F_1(\mfN)F^3_2(\mfN)\right] = \EE\left[F^3_1(\mfN)F_2(\mfN)\right] = 0$. Indeed, this holds since both $F_1(\mfN)F^3_2(\mfN)$ and $F^3_1(\mfN)F_2(\mfN)$ have odd degrees and so are anti-symmetric.
	\end{proof}
	
	\subsection{Graph Polynomials}
	Let us now show that the fourth-moment theorem holds for monochromatic counts in graphs with no influential vertices.
	For graphs $G$ and $H$, recall the polynomials $\hind_H$ and $\MP$ defined in \cref{sec:gpoly}.
	To apply~\cref{thm:fourthmoment}, we first need to estimate the Gaussian variance of the $m$ homogeneous component $\MPm$, for even $m$.
	\begin{lemma} \label{lem:homogenvar}
		Let $G$ and $H$ be graphs with $v(H) = r$. Then, if $\mfN \sim \mcN(0,\mathrm{I}_{v(G)})$, for any $0<m \leq r$ even,
		\begin{align*} 
			\frac{1}{4^{r-1}}\binom{r}{m} N(H,G) \leq \EE\left[\MPm(\mfN)^2\right]\leq \frac{1}{4^{r-1}}\binom{r}{m}N(H,G)\max\limits_{v\in V(G)}D_{v}(H,G),
		\end{align*}
		where $N(H,G)$ is the number of copies of $H$ in $G$, as in \eqref{d:nhgn}. 
		Moreover, for triangles, when $H = \triangle$,
		$\EE\left[F_{\triangle,G,2}(\mfN)^2\right] = \mathrm{Var}(F_{\triangle, G}(\mfN))$. 
	\end{lemma}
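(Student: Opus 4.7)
The plan is to directly compute $\EE[\MPm(\mfN)^2]$ from the multi-linear representation in~\eqref{eq:countingmultirep}. Isolating the degree-$m$ homogeneous component for even $m$ gives
\[
\MPm(x) = \frac{1}{2^{r-1}} \sum_{\rb \in \binom{V(G)}{m}} D_{\rb}(H,G)\, x^\rb.
\]
Since the monomials $\{\mfN^\rb\}_{\rb \in \binom{V(G)}{m}}$ form an orthonormal system with respect to the standard Gaussian measure (each $\mfN^\rb$ is a product of $|\rb|$ distinct independent standard Gaussians, so $\EE[\mfN^\rb \mfN^{\rb'}] = \mathbf{1}\{\rb = \rb'\}$), expanding the square yields
\[
\EE\left[\MPm(\mfN)^2\right] = \frac{1}{4^{r-1}} \sum_{\rb \in \binom{V(G)}{m}} D_{\rb}(H,G)^2.
\]

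Next I will bound this sum using a double-counting identity. Each unordered $m$-subset $\rb$ of $V(G)$ and each copy of $H$ in $G$ containing $\rb$ contribute once to $\sum_\rb D_\rb(H,G)$; conversely, each of the $N(H,G)$ copies of $H$ contributes $\binom{r}{m}$ choices of $\rb$, giving
\[
\sum_{\rb \in \binom{V(G)}{m}} D_{\rb}(H,G) = \binom{r}{m} N(H,G).
\]
For the lower bound, since each $D_\rb(H,G)$ is a non-negative integer, $D_\rb^2 \geq D_\rb$, so $\sum_\rb D_\rb^2 \geq \binom{r}{m} N(H,G)$. For the upper bound, observe that for any $\rb$ containing a vertex $v$, every copy of $H$ containing $\rb$ also contains $v$, hence $D_\rb(H,G) \leq D_v(H,G) \leq \max_{v \in V(G)} D_v(H,G)$. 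Combining with the above identity yields
\[
\sum_\rb D_\rb^2 \leq \max_{v \in V(G)} D_v(H,G) \cdot \binom{r}{m} N(H,G),
\]
which gives the upper bound after dividing by $4^{r-1}$.

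Finally, for the triangle case $H = \triangle$ with $r = 3$, the polynomial $\hind_\triangle$ in~\eqref{eq:triexample} has only degree $0$ and degree $2$ components, so $F_{\triangle,G} = F_{\triangle,G,0} + F_{\triangle,G,2}$ with $F_{\triangle,G,0}$ a constant. Hence $\mathrm{Var}(F_{\triangle,G}(\mfN)) = \mathrm{Var}(F_{\triangle,G,2}(\mfN))$, and since each monomial $\mfN_{v_1}\mfN_{v_2}$ in $F_{\triangle,G,2}$ has mean zero (as a product of independent centered Gaussians), $\EE[F_{\triangle,G,2}(\mfN)] = 0$ and therefore $\mathrm{Var}(F_{\triangle,G,2}(\mfN)) = \EE[F_{\triangle,G,2}(\mfN)^2]$. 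No step here is genuinely hard; the only point requiring mild care is the double-counting identity and the monotonicity $D_\rb \leq D_v$, both of which follow directly from the definition of $D_\rb(H,G)$ in~\eqref{d:dw}.
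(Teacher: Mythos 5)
Your proof is correct and follows essentially the same route as the paper's: orthonormality of the Gaussian monomials to get $\EE[\MPm(\mfN)^2] = \frac{1}{4^{r-1}}\sum_\rb D_\rb^2$, the double-counting identity $\sum_\rb D_\rb = \binom{r}{m}N(H,G)$, the bounds $D_\rb \leq D_\rb^2$ (integrality) and $D_\rb \leq D_v$ for the two inequalities, and the observation that $F_{\triangle,G}$ has only a constant and a degree-$2$ part for the triangle case. No differences worth noting.
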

	\begin{proof}
		The representation in \eqref{eq:countingmultirep} along with a simple calculation shows that for even $m$
		$$\EE\left[\MPm(\mfN)^2\right] =\frac{1}{4^{r-1}}\sum\limits_{\rb \in {\binom{V(G)}{m}}}D^2_{\rb}(H,G).$$
		Since every copy of $H$ contains $\binom{r}{m}$ subsets of size $m$, we also have,
		$$\sum\limits_{\substack{\rb \in \binom{V(G)}{m}}} D_{\rb}(H,G) = \binom{r}{m}N(H,G).$$
		For any $\rb$ and $v \in \rb$, $D_{\rb}(H,G)\leq D_{v}(H,G)$, which establishes
		\begin{align*}
			\EE\left[\MPm(\mfN)^2\right]\leq \frac{1}{4^{r-1}}\max\limits_{v\in V(G)}D_{v}(H,G)\sum\limits_{{\rb \in \binom{V(G)}{m}}} D_{\rb}(H,G) = \frac{1}{4^{r-1}}\binom{r}{m}N(H,G)\max\limits_{v\in V(G)}D_{v}(H,G).
		\end{align*}
		Similarly, since $D_{v}(H,G)$ is a non-negative integer,
		\begin{align*}
			\frac{1}{4^{r-1}}\binom{r}{m} N(H,G) = \frac{1}{4^{r-1}}\sum\limits_{{\rb \in \binom{V(G)}{m}}} D_{\rb}(H,G) \leq \EE\left[\MPm(\mfN)^2\right],
		\end{align*}
		which concludes the proof in the general case.
		
		The result for triangles follows by observing that $F_{\triangle, G}$ is quadratic, as in \eqref{eq:triexample}. Thus,
		$$F_{\triangle, G} = F_{\triangle, G,0} + F_{\triangle, G,2},$$ with $F_{\triangle, G,0} = \EE\left[F_{\triangle, G}(\mfN)\right].$
	\end{proof}
	
	We can now prove a central limit theorem-type result for $\MP(\mfN)$.
	\begin{lemma} \label{lem:polyCLT}
		Let $G$ and $H$ be graphs with $v(H) = r$. Then, for $\mcN \sim \mcN(0,\mathrm{I}_{|V(G)|})$, $\mcZ \sim \mcN(0,1)$, and the normalized polynomial $\widetilde \MP(\mfN):=\frac{\MP(\mfN) - \EE\left[\MP(\mfN)\right]}{\sqrt{\mathrm{Var}(\MP(\mfN))}}$,
		\begin{align*}
			W_1(\widetilde \MP (\mfN),\mcZ) &\lesssim \max\limits_{v\in V(G)}D_{v}(H,G)\left(\sqrt{\EE\left[\widetilde\MP(\mfN)^4\right] - 3} +\left(\EE\left[\widetilde\MP(\mfN)^4\right] - 3\right)^{\frac{1}{4}}\right),\\
			d_2(\widetilde \MP (\mfN),\mcZ) &\lesssim \left(\sqrt{\EE\left[\widetilde\MP(\mfN)^4\right] - 3} +\left(\EE\left[\widetilde\MP(\mfN)^4\right] - 3\right)^{\frac{1}{4}}\right).
		\end{align*}
		Moreover, for triangles, when $H = \triangle$,
		$$W_1(\widetilde{F_{\triangle, G}} (\mfN),\mcZ) \lesssim \sqrt{\EE\left[\widetilde{F_{\triangle, G}}(\mfN)^4\right] - 3} +\left(\EE\left[\widetilde{F_{\triangle, G}}(\mfN)^4\right] - 3\right)^{\frac{1}{4}}.$$
	\end{lemma}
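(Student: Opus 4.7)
The plan is to apply Proposition \ref{thm:fourthmoment} to the normalized polynomial $\widetilde\MP$ and then translate the resulting hypothesis-quantities (the mixed-moment nonnegativity and the quantity $m(F) := \max_{m}\EE[F_m(\mfN)^2]^{-1}$) into graph-theoretic quantities via Lemma \ref{lem:homogenvar}.

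First, I would verify the hypotheses of Proposition \ref{thm:fourthmoment} for $\widetilde\MP$. Centering and rescaling gives $\EE[\widetilde\MP(\mfN)] = 0$ and $\EE[\widetilde\MP(\mfN)^2] = 1$ by definition. For the mixed-moment inequalities \eqref{eq:conds}, note from the representation \eqref{eq:countingmultirep} that all coefficients of $\MP$ are of the form $\tfrac{1}{2^{r-1}}D_{\rb}(H,G) \geq 0$; rescaling by the positive constant $1/\sigma_H(G)$ preserves this positivity, and subtracting the constant term only alters $\widetilde F_0 = 0$, leaving all coefficients of $\widetilde F_{H,G,m}$ for $m \geq 1$ nonnegative. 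Hence Lemma \ref{lem:allpos} applies and yields \eqref{eq:conds} for $\widetilde\MP$.

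Next, I would control the factor $m(\widetilde\MP) = \max_m \EE[\widetilde F_{H,G,m}(\mfN)^2]^{-1} = \mathrm{Var}(\MP(\mfN))/\min_m\EE[F_{H,G,m}(\mfN)^2]$, where the minimum ranges over even $0 < m \leq r$ (odd components vanish by \eqref{eq:homogenousindicator}). Using the lower bound from Lemma \ref{lem:homogenvar}, for each such $m$,
\[
\EE[F_{H,G,m}(\mfN)^2] \geq \tfrac{1}{4^{r-1}}\tbinom{r}{m} N(H,G) \gtrsim N(H,G).
\]
Summing the upper bound in Lemma \ref{lem:homogenvar} across the (constantly-many) even indices $m$ gives
\[
\mathrm{Var}(\MP(\mfN)) = \sum_{m \text{ even}, m>0} \EE[F_{H,G,m}(\mfN)^2] \lesssim N(H,G)\,\max_{v \in V(G)}D_v(H,G),
\]
so that $m(\widetilde\MP) \lesssim \max_{v\in V(G)}D_v(H,G)$. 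Now Proposition \ref{thm:fourthmoment} delivers the stated $W_1$ and $d_2$ bounds (the $d_2$ bound needs no $m(F)$ factor, so it is immediate).

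For the triangle refinement, I would use that $F_{\triangle,G}$ is at most quadratic, so by \eqref{eq:triexample} the only nonzero homogeneous components are $F_{\triangle,G,0}$ and $F_{\triangle,G,2}$. Centering removes $F_{\triangle,G,0}$, and Lemma \ref{lem:homogenvar} gives $\EE[F_{\triangle,G,2}(\mfN)^2] = \mathrm{Var}(F_{\triangle,G}(\mfN))$, hence $\EE[\widetilde F_{\triangle,G,2}(\mfN)^2] = 1$ and $m(\widetilde F_{\triangle,G}) = 1$. Proposition \ref{thm:fourthmoment} then produces the cleaner $W_1$ bound without the $\max_v D_v$ prefactor. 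The main conceptual step, and really the only nonroutine part, is combining the two-sided estimate of Lemma \ref{lem:homogenvar} to identify $m(\widetilde\MP)$ with (a constant times) the maximum vertex influence; once that identification is made, everything else is bookkeeping on top of Proposition \ref{thm:fourthmoment}.
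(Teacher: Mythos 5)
Your proposal is correct and follows essentially the same route as the paper: invoke Lemma~\ref{lem:allpos} and Proposition~\ref{thm:fourthmoment} using the nonnegativity of the coefficients in \eqref{eq:countingmultirep}, then use the two-sided bounds of Lemma~\ref{lem:homogenvar} to show $\max_m \EE[\widetilde F_{H,G,m}(\mfN)^2]^{-1} \lesssim \max_{v} D_v(H,G)$, with the triangle case handled by noting $F_{\triangle,G}$ has a single nontrivial homogeneous component. No gaps.
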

	\begin{proof}
		As observed from \eqref{eq:countingmultirep}, $\MP$ is a multi-linear polynomial of degree at most $r$, and all of its coefficients are positive. Thus,~\cref{lem:allpos} and~\cref{thm:fourthmoment} imply,
		\begin{align*}
			W_1(\widetilde \MP (\mfN),\mcZ) &\lesssim \max\limits_{m \leq r}\EE\left[\widetilde \MPm (\mfN)^2\right]^{-1}\left(\sqrt{\EE\left[\widetilde \MP(\mfN)^4\right] - 3} +\left(\EE\left[\widetilde \MP (\mfN)^4\right] - 3\right)^{\frac{1}{4}}\right),\\
			d_2(\widetilde \MP (\mfN),\mcZ) &\lesssim \left(\sqrt{\EE\left[\widetilde \MP(\mfN)^4\right] - 3} +\left(\EE\left[\widetilde \MP (\mfN)^4\right] - 3\right)^{\frac{1}{4}}\right)
		\end{align*}
		For triangles, by \cref{lem:homogenvar}, $\EE\left[\widetilde{F_{\triangle,G,2}}(\mfN)\right] = 1$, which resolves this case. For general $H$, it will suffice to show that
		\begin{equation} \label{eq:inftovar}
			\max\limits_{m \leq r} \EE\left[\widetilde \MPm (\mfN)^2\right]^{-1} \leq 2^r\max\limits_{v\in V(G)}D_{v}(H,G).
		\end{equation}
		By~\cref{lem:homogenvar}, for any $0<m\leq r$ even,
		$\EE\left[\MPm(\mfN)^2\right] \geq \frac{1}{4^{r-1}}\binom{r}{m}N(H,G)$.
		Similarly, since $\mathrm{Var}(\MP(\mfN)) = \sum\limits_{m=1}^{r}\EE\left[\MPm(\mfN)^2\right],$
		$$\mathrm{Var}(\MP(\mfN))\leq  \frac{1}{4^{r-1}}\max\limits_{v\in V(G)}D_{v}(H,G)\sum\limits_{m' = 1}^r\binom{r}{m'}N(H,G) \leq \frac{1}{2^{r-2}}\max\limits_{v\in V(G)}D_{v}(H,G)N(H,G).$$ 
		So, 
		$$\EE\left[\widetilde \MPm (\mfN)^2\right]^{-1} = \frac{\mathrm{Var}(\MP(\mfN))}{\EE\left[\MPm(\mfN)^2\right]} \leq \max\limits_{v\in V(G)}D_{v}(H,G)\frac{4^{r-1}N(H,G)}{2^{r-2}\binom{r}{m}N(H,G)},$$
		which is \eqref{eq:inftovar}.
	\end{proof}
	\subsection{A Fourth-Moment Theorem for Monochromatic Subgraph Counts} 
	We can now combine~\cref{lem:polyCLT} with the Invariance Principle in~\cref{thm:invariance} to prove a Fourth Moment Theorem for $2$-colorings.
	\begin{proof}[Proof of~\cref{thm:mainquant}]
		Let $X \sim \mathrm{Rad}(\frac{1}{2})^{\otimes v(G)}$. Since (c.f.~\eqref{eq:ttopoly}) $T(H, G) = F_{H, G}(X)$, we have
		$$Z(H,G) = \widetilde \MP(X) = \frac{\MP(X) - \EE\left[\MP(X)\right]}{\sqrt{\mathrm{Var}(\MP(X))}}.$$
		Thus, we have the following 
		\begin{align*}
			W_1(Z(H,G), \mcZ) &\leq W_1(\widetilde\MP(X),\widetilde\MP(\mfN)) + W_1(\widetilde\MP(\mfN),\mcZ) \tag{triangle ineq.}\\
			W_1(\widetilde\MP(X),\widetilde\MP(\mfN)) &\lesssim \left(\max\limits_{v\in V(G)}\infl_v(\widetilde \MP)\right)^{\frac{1}{4}} \tag{\cref{thm:invariance}}\\
			W_1(\widetilde\MP(\mfN),\mcZ)  &\lesssim \max\limits_{v\in V(G)}D_{v}(H,G)\left(\sqrt{\EE\left[\widetilde\MP(\mfN)^4\right] - 3} + \left(\EE\left[\widetilde\MP(\mfN)^4\right] - 3\right)^{\frac{1}{4}}\right) \tag{\cref{lem:polyCLT}}.
		\end{align*}
		We now observe that by~\cref{lem:countinginfluence},
		$$\max\limits_{v\in V(G)}\infl_v(\widetilde \MP) = \frac{\max\limits_{v\in V(G)}\infl_v(\MP)}{\mathrm{Var}(\MP(X))} \lesssim \frac{\max\limits_{v\in V(G)}D_v^2(H,G)}{{\mathrm{Var}(\MP(X))}}.$$
		Combining these bounds gives the desired result
		$$W_1(Z(H,G), \mcZ) \lesssim \max_{v \in V(G)} \left[\left( \frac{D_v^2(H,G)}{{\mathrm{Var}(\MP(X))}}\right)^{\frac14} + D_v(H, G)\left(\sqrt{\EE\left[\widetilde\MP(\mfN)^4\right] - 3} + \left(\EE\left[\widetilde\MP(\mfN)^4\right] - 3\right)^{\frac{1}{4}}\right)  \right].$$
		The proof for $d_2$ is essentially the same, by applying the $d_2$ bound in~\cref{lem:polyCLT}. We only need to note that by \eqref{eq:dtowass},
		$$d_2(\widetilde\MP(X),\widetilde\MP(\mfN)) \leq W_1(\widetilde\MP(X),\widetilde\MP(\mfN)).$$
	\end{proof}
	We can also use \cref{thm:mainquant} to obtain bounds on the Kolmogorov distance. By \cite[Theorem 3.3]{chen2011normal}, $d_{\mathrm{kol}}(Z(H,G), \mcZ) \leq \sqrt{2W_1(Z(H,G), \mcZ)}$ and in \cref{lem:koltod2} of the Appendix we prove $d_{\mathrm{kol}}(Z(H,G), \mcZ) \lesssim d_2(Z(H,G), \mcZ)^{\frac{1}{3}}$. Thus \cref{thm:mainquant} and these bounds immediately imply the following quantitative version of \cref{thm:informal}.
	\begin{cor} \label{cor:kolgeneral}
		In the same setting of \cref{thm:mainquant},
		\begin{align*}
			d_{\mathrm{kol}}&(Z(H,G), \mcZ) \lesssim \\
			&\max\limits_{v\in V(G)}\min\left(\left(\left(\frac{D_v^2(H,G)}{{\mathrm{Var}(\MP(X))}}\right)^{\frac{1}{4}} + D_{v}(H,G)m_4(H, G)\right)^{\frac{1}{2}},\left( \left(\frac{D_v^2(H,G)}{{\mathrm{Var}(\MP(X))}}\right)^{\frac{1}{4}} + m_4(H, G)\right)^{\frac{1}{3}}\right). 
		\end{align*}
	\end{cor}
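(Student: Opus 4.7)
The plan is to bootstrap the two quantitative estimates from \cref{thm:mainquant} to the Kolmogorov distance using the two softening inequalities recalled just above the corollary, and then combine them. No new ideas are required; the proof is essentially an assembly of previously established bounds.

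First, I would apply \cref{thm:mainquant} as a black box, abbreviating for each $v \in V(G)$
\[
A_v := \left(\frac{D_v^2(H,G)}{\mathrm{Var}(\MP(X))}\right)^{1/4} + D_v(H,G)\, m_4(H,G), \qquad B_v := \left(\frac{D_v^2(H,G)}{\mathrm{Var}(\MP(X))}\right)^{1/4} + m_4(H,G),
\]
so that the theorem delivers $W_1(Z(H,G), \mcZ) \lesssim \max_{v} A_v$ and $d_2(Z(H,G), \mcZ) \lesssim \max_{v} B_v$.

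Second, I would convert each of these bounds into a Kolmogorov bound. The inequality $d_{\mathrm{kol}}(Z(H,G), \mcZ) \leq \sqrt{2\, W_1(Z(H,G), \mcZ)}$ from \cite[Theorem 3.3]{chen2011normal} yields $d_{\mathrm{kol}}(Z(H,G), \mcZ) \lesssim \max_v A_v^{1/2}$ (using monotonicity of the square root to commute it with the maximum), while \cref{lem:koltod2} gives $d_{\mathrm{kol}}(Z(H,G), \mcZ) \lesssim d_2(Z(H,G), \mcZ)^{1/3} \lesssim \max_v B_v^{1/3}$. Since both bounds hold simultaneously, the Kolmogorov distance is controlled by their minimum, which after rewriting via monotonicity of the root functions gives the expression displayed in the corollary.

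The only nonroutine ingredient is \cref{lem:koltod2}, which is deferred to the Appendix. The hardest step, to the extent there is one, is the proof of that lemma; it proceeds by standard Gaussian mollification. Concretely, one approximates the indicator $\mathbf{1}_{(-\infty, t]}$ by its convolution with a Gaussian density of bandwidth $\eta > 0$ to obtain a $C^2$ test function whose $\|\cdot\|_{C_2}$-norm is of order $\eta^{-2}$; testing the $d_2$ bound against this smoothed indicator contributes an error of order $d_2/\eta^{2}$, while the anti-concentration of $\mcZ$ contributes an additive error of order $\eta$. Optimizing the sum $d_2/\eta^{2} + \eta$ over $\eta > 0$ yields $d_{\mathrm{kol}} \lesssim d_2^{1/3}$. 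This optimization is the only place where any concrete calculation is needed, and it is entirely standard.
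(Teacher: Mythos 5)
Your proposal is correct and follows essentially the same route as the paper: apply \cref{thm:mainquant}, convert the $W_1$ bound via $d_{\mathrm{kol}} \leq \sqrt{2W_1}$ and the $d_2$ bound via \cref{lem:koltod2}, and take the minimum. The only cosmetic difference is that the paper's proof of \cref{lem:koltod2} smooths the indicator with an explicit polynomial interpolant rather than a Gaussian mollifier, but the bandwidth optimization $d_2/\delta^2 + \delta$ is identical.
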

	Similarly, by applying the Wasserstein bound to the triangle bound of \cref{lem:polyCLT} we obtain an improved convergence rate for triangles.
	\begin{cor} \label{cor:koltriangles}
		In the same setting of \cref{thm:mainquant}, if $H = \triangle$ is a triangle,
		\begin{align*}
			d_{\mathrm{kol}}&(Z(\triangle ,G), \mcZ) \lesssim \max\limits_{v\in V(G)}\left(\left(\frac{D_v^2(H,G)}{{\mathrm{Var}(\MP(X))}}\right)^{\frac{1}{4}} + m_4(H, G)\right)^{\frac{1}{2}}. 
		\end{align*}
	\end{cor}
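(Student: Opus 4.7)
The plan is to retrace the proof of Theorem~\ref{thm:mainquant} but to feed in the stronger triangle-specific Wasserstein bound from Lemma~\ref{lem:polyCLT}, and then convert the resulting $W_1$ bound to a Kolmogorov bound via the standard inequality $d_{\mathrm{kol}}(Z,\mcZ)\leq \sqrt{2\, W_1(Z,\mcZ)}$ (cited from \cite[Theorem 3.3]{chen2011normal} just above the corollary). In particular, the key observation is that for $H=\triangle$ the polynomial $\MPt$ is quadratic (only $\MPt[,0]$ and $\MPt[,2]$ are nonzero, see \eqref{eq:triexample}), and Lemma~\ref{lem:homogenvar} gives $\EE[F_{\triangle,G,2}(\mfN)^2]=\Var(\MPt(\mfN))$. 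This means the prefactor $\max_m \EE[\widetilde{\MPm}(\mfN)^2]^{-1}$ appearing in the Wasserstein side of Proposition~\ref{thm:fourthmoment} equals $1$ for triangles, which is precisely why the triangle bound of Lemma~\ref{lem:polyCLT} avoids the $\max_v D_v(H,G)$ factor that appears in the general case.

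Concretely, I would first write $Z(\triangle,G)=\widetilde{\MPt}(X)$ with $X\sim\mathrm{Rad}(\tfrac12)^{\otimes v(G)}$, just as in the proof of Theorem~\ref{thm:mainquant}. By the triangle inequality,
\begin{equation*}
W_1(Z(\triangle,G),\mcZ)\;\leq\;W_1(\widetilde{\MPt}(X),\widetilde{\MPt}(\mfN))\;+\;W_1(\widetilde{\MPt}(\mfN),\mcZ).
\end{equation*}
For the first term apply the Invariance Principle (Theorem~\ref{thm:invariance}) together with Lemma~\ref{lem:countinginfluence}, yielding
\begin{equation*}
W_1(\widetilde{\MPt}(X),\widetilde{\MPt}(\mfN))\;\lesssim\;\left(\max_{v\in V(G)}\frac{D_v^2(\triangle,G)}{\Var(\MPt(X))}\right)^{\!1/4}.
\end{equation*}
For the second term, use the triangle-specific bound from Lemma~\ref{lem:polyCLT},
\begin{equation*}
W_1(\widetilde{\MPt}(\mfN),\mcZ)\;\lesssim\;\sqrt{\EE[\widetilde{\MPt}(\mfN)^4]-3}\;+\;\bigl(\EE[\widetilde{\MPt}(\mfN)^4]-3\bigr)^{1/4}\;=\;m_4(\triangle,G).
\end{equation*}

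Combining the two displays produces a Wasserstein bound of the form
\begin{equation*}
W_1(Z(\triangle,G),\mcZ)\;\lesssim\;\max_{v\in V(G)}\left[\left(\frac{D_v^2(\triangle,G)}{\Var(\MPt(X))}\right)^{\!1/4}+m_4(\triangle,G)\right],
\end{equation*}
which, unlike the $W_1$ bound of Theorem~\ref{thm:mainquant}, does not multiply $m_4$ by $D_v(\triangle,G)$. Applying $d_{\mathrm{kol}}\leq\sqrt{2W_1}$ to this inequality and pulling the square root inside the max yields the stated Kolmogorov bound. No step is genuinely delicate here, since all the substantive work has been done in Lemmas~\ref{lem:countinginfluence},~\ref{lem:homogenvar}, and~\ref{lem:polyCLT}; the only minor point to check is that the ``quadratic'' structure of $\MPt$ really does let one drop the $D_v$ factor in the second term, which is exactly the content of the moreover clause in Lemma~\ref{lem:polyCLT} and follows from $F_{\triangle,G,2}$ being (up to centering) the entire polynomial $\MPt$.
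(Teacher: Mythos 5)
Your proposal is correct and matches the paper's (tersely stated) argument exactly: the paper obtains this corollary by rerunning the decomposition from Theorem~\ref{thm:mainquant} with the triangle-specific $W_1$ bound of Lemma~\ref{lem:polyCLT} (which drops the $\max_v D_v$ prefactor because $F_{\triangle,G}$ is quadratic and $\EE[F_{\triangle,G,2}(\mfN)^2]=\Var(F_{\triangle,G}(\mfN))$), and then applying $d_{\mathrm{kol}}\leq\sqrt{2\,W_1}$ from \cite[Theorem 3.3]{chen2011normal}. No gaps.
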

	\section{A Closer Look at Monochromatic Triangles} \label{sec:triangles}
	In this section, we focus on monochromatic triangle counts, taking $H = \triangle$, and prove \cref{thm:inf-edge}. We recall the polynomial expression in~\eqref{eq:triexample}:
	$$\hind_\triangle(x_u,x_v,x_w) = 2(x_vx_u + x_vx_w + x_ux_w + 1).$$
	For graph $G$, let $\mcT(G)$ be the set of unlabeled triangles in $G$. Given a two-coloring $x\in \{\pm 1\}^{v(G)}$ define the centered, normalized count 
	$$\nt(x) := \widetilde\MPt(x) = \frac{1}{4\sigma_\triangle}\sum\limits_{T \in \mathcal{T}(G)} (\hind_\triangle(x\vert_T) - 2) = \frac{1}{\sigma_\triangle}\sum\limits_{uv \in E(G)} D_{uv}(\triangle, G)x_ux_v,$$
	where for $T = \{u,v,w\}$, $x\vert_T = \{x_v,x_u,x_w\}$, and
	\begin{equation} \label{eq:varrep}
		\sigma^2_\triangle := \mathrm{Var}(T(\triangle, G)) = \frac{1}{16}\sum\limits_{e\in E(G)}D^2_e(\triangle, G).
	\end{equation}
	The expression on the right-hand side follows from~\eqref{eq:countingmultirep}, noting that the only pairs of vertices $u, v \in V(G)$ where $D_{uv}(\triangle, G) > 0$ are those that form an edge $uv \in E(G).$
	
	We study the distribution of $\nt(X)$ when  $X \sim \mathrm{Rad}(\frac{1}{2})^{\otimes v(G)}$, so that $\nt(X) = Z(\triangle, G)$. It will be convenient to start by replacing $X$ with a Gaussian vector, and in light of~\cref{thm:invariance}, we begin by handling influential variables. If $\{G_n\}$ is a sequence of graphs let us denote by $\mathcal{I}_n$ the set of influential variables. Formally, $\mathcal{I}_n =(v_i^{(n)})_{i=1}^{k}\subset V(G_n)$ is an ordered tuple of vertices, such for any $i = [k]$ 
	$$\liminf\limits_{n \to \infty} \frac{D_{v_i^{(n)}}(\triangle, G_n)}{\sigma_\triangle} > 0,$$
	and for any other vertices $w^{(n)} \in V(G_n)$, where there exists $N$ such that for all $n > N,$ $w^{(n)} \not \in \mcI_n,$ we have 
	$$\limsup\limits_{n \to \infty} \frac{D_{w^{(n)}}(\triangle, G_n)}{\sigma_\triangle} = 0.$$
	Note that for simplicity, we are implicitly assuming that the number of influential vertices is uniformly bounded in $n$.
	\subsection{Influential Edges Forbid CLTs}
	Our first goal will be to show that when $G_n$ contains an influential edge, then the CLT cannot hold, and thus having no influential edge is a necessary condition for the asymptotic normality of $Z(\triangle, G_n)= Z_{\triangle}(X)$ when $X \sim \Rad(\frac12)^{\otimes v(G_n)}$. This is precisely the first item of \cref{thm:inf-edge}.
	\begin{prop} \label{prop:edgenoCLT}
		Let $\{G_n\}_{n\geq 0}$ be a sequence of graphs with $\mathcal{I}_n$ the sets of their influential vertices. Suppose that $\sup\limits_n|\mathcal{I}_n| = m < \infty$ and that there exists a sequence of influential edge $e_n\in E(G_n)$, such that
		$$\lim \sup\limits_{n\to \infty} \frac{D_{e_n}(\triangle,G_n)}{\sigma_\triangle} > 0.$$
		Then, if $\mcZ \sim \mcN(0, 1)$,
		$$\limsup\limits_{n\to \infty} d_{\mathrm{kol}}(Z(\triangle, G_n),\mcZ) > 0.$$
	\end{prop}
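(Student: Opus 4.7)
I would argue by contradiction: assume $Z(\triangle, G_n) \to \mcZ \sim \mcN(0,1)$ in law, thereby negating the conclusion. After passing to a subsequence I may also assume $c_n := D_{e_n}(\triangle, G_n)/\sigma_\triangle \to c > 0$ and $|\mcI_n|$ stabilizes at some fixed $k$. Since $D_{u_n}(\triangle, G_n), D_{v_n}(\triangle, G_n) \geq D_{e_n}(\triangle, G_n)$, the endpoints $u_n, v_n$ of the influential edge $e_n$ both belong to $\mcI_n$. The idea is to exhibit $Z_\triangle$ as a nontrivial finite mixture of Gaussians whose means are not all equal, which cannot itself be Gaussian.

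The core step is to condition on $\bar a := (X_v)_{v \in \mcI_n} \in \{\pm 1\}^k$. Using the quadratic representation $\nt(x) = \tfrac{1}{\sigma_\triangle}\sum_{uv \in E(G_n)} D_{uv}(\triangle, G_n)\, x_u x_v$, I would decompose
$$
Z_\triangle \;=\; \mu(\bar a) \;+\; \widetilde Z_{\bar a}(X_\sim),
$$
where $\mu(\bar a) := \tfrac{1}{\sigma_\triangle}\sum_{uv \in E,\; u,v \in \mcI_n} D_{uv}(\triangle, G_n)\, a_u a_v$ collects the terms that become constants under conditioning (so $\mu(\bar a) = \EE[Z_\triangle \mid \bar a]$), and $\widetilde Z_{\bar a}$ is a zero-mean affine (degree-$\leq 2$) multilinear polynomial in $X_\sim := (X_v)_{v \notin \mcI_n}$. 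A minor extension of \cref{lem:countinginfluence} (to absorb the linear terms produced by conditioning, contributing at most a factor of $k$) gives $\max_{w \notin \mcI_n} \infl_w(\widetilde Z_{\bar a}) = O\bigl(k \max_{w \notin \mcI_n} D_w^2(\triangle, G_n)/\sigma_\triangle^2\bigr) \to 0$ by the definition of $\mcI_n$. Because $\widetilde Z_{\bar a}$ has degree at most $2$, the positivity hypotheses \eqref{eq:conds} of \cref{thm:fourthmoment} reduce to parity identities (the relevant cross-moments have odd total degree and vanish by $x \to -x$ symmetry) and hold automatically. Combining \cref{thm:fourthmoment} with the invariance principle (\cref{thm:invariance}) then yields conditional convergence $\widetilde Z_{\bar a} \to \mcN(0, \sigma_{\bar a}^2)$ in law, so $Z_\triangle$ itself converges to the mixture $2^{-k}\sum_{\bar a} \mcN(\mu(\bar a), \sigma_{\bar a}^2)$.

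To conclude, I would rule out that this mixture equals $\mcN(0,1)$. Partitioning the $2^k$ configurations by the sign of $a_{u_n} a_{v_n}$, the influential-edge term $c_n a_{u_n} a_{v_n}$ in $\mu(\bar a)$ flips sign between the two classes, while all remaining contributions $c_{u_n v} a_{u_n} a_v$ and $c_{v_n v} a_{v_n} a_v$ average to zero on each class under the involution $(a_{u_n}, a_{v_n}) \mapsto (-a_{u_n}, -a_{v_n})$ (which preserves $a_{u_n} a_{v_n}$). Consequently the class-conditional means of $\mu(\bar a)$ differ by $2c_n \to 2c > 0$, so $\mu(\bar a)$ attains at least two distinct limiting values. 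A finite mixture of Gaussians with non-constant means cannot equal a single Gaussian: inspecting the characteristic function $\sum_i p_i e^{i\mu_i t - \sigma_i^2 t^2/2}$ as $|t| \to \infty$, the components of minimal variance dominate in magnitude, and their phases can align with those of a single Gaussian only if they share a common mean. This delivers the desired contradiction.

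The principal obstacle is the verification of the fourth-moment hypothesis of \cref{thm:fourthmoment} for each individual conditional polynomial $\widetilde Z_{\bar a}$. The global assumption $\EE[Z_\triangle^4] \to 3$ controls, via the tower property, only the \emph{average} over $\bar a$ of the conditional fourth-moment discrepancies $\EE[\widetilde Z_{\bar a}^4] - 3\sigma_{\bar a}^4$. To propagate vanishing of the average to vanishing of each individual term, I would invoke the non-negativity of the fourth-moment discrepancy for degree-$\leq 2$ multilinear polynomials in Gaussian variables (a consequence of the positive correlation of squared Wiener chaoses cited in the proof of \cref{thm:fourthmoment}), transfer it to the Rademacher setting through the vanishing invariance-principle error, and thereby conclude that a non-negative sequence whose average vanishes must vanish termwise.
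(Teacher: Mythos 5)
Your overall architecture --- condition on the colors of the influential vertices, represent $Z(\triangle,G_n)$ as a finite mixture of the conditional laws, use the influential edge to show the conditional means $\mu(\bar a)$ take at least two distinct values differing by $2c_n$, and invoke identifiability of finite Gaussian mixtures --- is exactly the paper's strategy, and those pieces of your argument are sound. The gap is in the step where you claim each conditional polynomial $\widetilde Z_{\bar a}$ is asymptotically Gaussian. Your mechanism for this is: (i) the contradiction hypothesis gives $\EE[Z^4]\to 3$; (ii) the tower property shows the \emph{average} over $\bar a$ of the conditional discrepancies $\EE[\widetilde Z_{\bar a}^4]-3\sigma_{\bar a}^4$ vanishes; (iii) each discrepancy is non-negative, so each vanishes. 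Step (ii) is false as stated. Writing $Z=\mu(\bar a)+\widetilde Z_{\bar a}$, one has
\begin{align*}
\EE[Z^4]-3 \;=\; \frac{1}{2^k}\sum_{\bar a}\Bigl(\EE[\widetilde Z_{\bar a}^4]-3\sigma_{\bar a}^4\Bigr)
\;+\;\frac{1}{2^k}\sum_{\bar a}\Bigl(\mu(\bar a)^4+6\mu(\bar a)^2\sigma_{\bar a}^2+4\mu(\bar a)\,\EE[\widetilde Z_{\bar a}^3]+3\sigma_{\bar a}^4\Bigr)-3\Bigl(\tfrac{1}{2^k}\sum_{\bar a}(\mu(\bar a)^2+\sigma_{\bar a}^2)\Bigr)^2,
\end{align*}
and the correction terms do not vanish in the regime you are in: the influential edge forces $\mu(\bar a)$ to be bounded away from $0$ for some $\bar a$ (this is the whole point of the hypothesis), and $\EE[\widetilde Z_{\bar a}^3]$ --- which for the quadratic form is governed by $\sum_i\lambda_{i,n}^3$ --- need not be small either. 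The signed term $4\mu\,\EE[\widetilde Z_{\bar a}^3]$ and the Jensen gap can therefore cancel a strictly positive average discrepancy, so $\EE[Z^4]\to 3$ does not force the conditional fourth moments to be Gaussian-like. Consequently you cannot rule out that some $\widetilde Z_{\bar a}$ converges to a genuinely non-Gaussian law (e.g.\ one with a chi-squared component), in which case your limiting object is not a Gaussian mixture and the identifiability argument does not apply.

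This is precisely why the paper does not argue by contradiction from the fourth moment at all. Instead it diagonalizes the conditional quadratic form as $\sum_i\lambda_{i,n}(M_i^2-1)$ (\cref{lem:Hdecomp}) and splits into two cases. When $\limsup|\lambda_{1,n}|>0$, the conditional (hence unconditional) law has sub-exponential lower tail bounds $\PP(|\nt^{(\rho)}(\mfN)|>t)\ge e^{-c't}$ (\cref{lem:Hsubexp}), which is already incompatible with a standard Gaussian limit --- no mixture argument is needed. Only when $\lambda_{1,n}\to 0$ does one get conditional Gaussianity, and there it comes from a Berry--Esseen bound for the sum of independent terms $\lambda_{i,n}(M_i^2-1)$ (\cref{lem:Hmixt}), not from a fourth-moment theorem. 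To repair your proof you would need to add this case split (or an equivalent tail argument) for the large-eigenvalue regime; the tower-property route cannot substitute for it.
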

	
	Our strategy for dealing with influential variables in $\nt(x)$ will be to fix their values. For a partial coloring $\rho: {\mathcal{I}_n} \to \{\pm 1\}$ we write $\nt^{(\rho)}:V(G_n) \setminus \mathcal{I}_n \to \RR$ for the function $\nt$ where every $v \in \mathcal{I}_n$ is assigned the color $\rho(v)$.
	We can express $\nt^{(\rho)}$ in the following way
	\begin{align*}
		\nt^{(\rho)}(x) &= \frac{1}{3\sigma_\triangle}\left[\sum\limits_{u,v\in \mathcal{I}_n}D_{uv}(\triangle,G_n)\rho(u)\rho(v) + \sum\limits_{u\in \mathcal{I}_n, v\notin \mathcal{I}_n}D_{uv}(\triangle,G_n)\rho(u)x_v + \sum\limits_{u,v\notin \mathcal{I}_n}D_{uv}(\triangle,G)x_ux_v\right]\\
		&= \nt^{(0, \rho)} + \nt^{(1,\rho)}(x) + \nt^{(2)}(x).
	\end{align*}
	For $i = 0,1,2$, $\nt^{(i, \rho)}$ stands for the $i^{\mathrm{th}}$ homogeneous part of $\nt^{(\rho)}$. It is also clear from the expression that $\nt^{(2)}$ does not depend on $\rho$. Let us collect some facts about this decomposition when the variables $x_v$ is taken to be Gaussian.
	\begin{lemma} \label{lem:Hdecomp}
		Let $\mfN \sim \mcN(0, \mathrm{I}_{|V(G_n) \setminus \mathcal{I}_n|})$, then the following conditions hold:
		\begin{enumerate}
			\item There exists a sequence $\{\lambda_{i,n}\}_{i=1}^\infty \in \RR$ with $|\lambda_{1, n}| \ge |\lambda_{2, n}| \ge \cdots $ and independent standard Gaussians $\{M_i\}_{i=1}^\infty$ such that,
			$$\nt^{(2)}(\mfN) \stackrel{\mathrm{law}}{=} \sum\limits_{i=1}^\infty \lambda_{i,n}(M_i^2 - 1).$$
			\item For every partial coloring $\rho$, there exists $\sigma_\rho^2 \in [0,1]$, such that $\nt^{(1,\rho)}(\mfN) \sim \mcN(0,\sigma^2_\rho).$  
			\item $\nt^{(0, \rho)} \xrightarrow{n\to \infty} 0$ for every $\rho$ if and only if there does not exist an influential edge.
		\end{enumerate}
		\begin{proof}
			For the first assertion, we note that $\nt^{(2)}(\mfN)$ is a quadratic form in Gaussian variables. The result is obtained by diagonalizing the form and using the fact that the Gaussian is rotationally invariant, see \cite[Proposition 2.7.13]{NP12} for more details.
			
			The second assertion follows since $\nt^{(1, \rho)}(\mfN)$ is a centered linear functional of Gaussian variables and hence Gaussian. The fact that $\sigma_\rho^2 \leq 1$ follows by calculating the variance and comparing to \eqref{eq:varrep}.
			
			For the last item, let $R \sim \Rad(\frac12)^{\otimes  
				|\mathcal{I}_n|}$ be a random coloring of the influential vertices. Since $R$ is centered, the condition $\nt^{(0, R)} \xrightarrow{n\to \infty} 0$ is equivalent to $\mathrm{Var}_R(\nt^{(0, R)}) \xrightarrow{n\to \infty} 0.$ Since $R(u)R(u')$ is uncorrelated with $R(v)R(v')$ whenever $\{u,u'\} \neq \{v,v'\}$ we get that 
			$$\mathrm{Var}_R(\nt^{(0, R)}) = \frac{1}{16\sigma^2_\triangle}\sum\limits_{uv\in \mathcal{I}_n}D^2_{uv}(\triangle,G_n).$$
			Suppose that we have an influential edge i.e. $u_n,v_n \in \mathcal{I}_n$ such that
			$$\liminf\limits_{n\to \infty}\frac{D_{u_nv_n}(\triangle,G_n)}{\sigma_\triangle} > 0.$$
			In this case it is clear that $\liminf\limits_{n\to \infty}\mathrm{Var}_R(\nt^{(0, R)}) >0$ as well. On the other hand, if there are no influential edges then,
			\begin{align*}
				\mathrm{Var}_R(\nt^{(0, R)}) &\leq \left(\max\limits_{u,v \in V(G)} \frac{D_{uv}(\triangle,G_n)}{\sigma_\triangle}\right)\frac{1}{16\sigma_\triangle}\sum\limits_{u,v\in \mathcal{I}_n}D_{uv}(\triangle,G_n) = o(1),
			\end{align*}
			where we have used the variance representation \eqref{eq:varrep}, and that $\sup\limits_n|\mathcal{I}_n| = m <\infty$.
		\end{proof}
	\end{lemma}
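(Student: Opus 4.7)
My plan is to handle each of the three items separately, following the chaos decomposition of $\nt^{(\rho)}$. Items (1) and (2) are essentially structural facts about Gaussian polynomials; item (3) carries the real content.

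For item (1), I would represent $\nt^{(2)}(\mfN)$ as a symmetric quadratic form $\mfN^{\top} A_n \mfN$ on $\RR^{V(G_n)\setminus \mcI_n}$, where the off-diagonal entries of $A_n$ are proportional to $D_{uv}(\triangle,G_n)$ and the diagonal vanishes by multilinearity. Diagonalizing $A_n$ by an orthogonal matrix $U$ and invoking the rotational invariance of the standard Gaussian, the coordinates $M_i := (U\mfN)_i$ are i.i.d.\ standard normals, so $\nt^{(2)}(\mfN) \stackrel{\mathrm{law}}{=} \sum_i \lambda_{i,n}M_i^2$ where the $\lambda_{i,n}$ are the eigenvalues of $A_n$; ordering them by decreasing absolute value and using $\sum_i \lambda_{i,n} = \mathrm{tr}(A_n) = 0$ (the zero diagonal again) converts this into the centered representation $\sum_i \lambda_{i,n}(M_i^2-1)$. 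For item (2), inspection of the formula for $\nt^{(1,\rho)}$ shows that it is a linear combination of the independent Gaussians $\{\mfN_v : v \notin \mcI_n\}$, hence automatically a centered Gaussian with some variance $\sigma_\rho^2$. To bound $\sigma_\rho^2$, I would write it explicitly as $\frac{1}{\sigma_\triangle^2}\sum_{v}\bigl(\sum_{u \in \mcI_n}D_{uv}\rho(u)\bigr)^2$, apply Cauchy--Schwarz (paying a factor $|\mcI_n| \leq m$), and compare to the variance identity~\eqref{eq:varrep}, which bounds the whole expression by an absolute constant; a benign rescaling pins the bound at $1$.

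For item (3), the ``if'' direction is immediate: under the hypothesis that no edge is influential, $\max_e D_e/\sigma_\triangle \to 0$, and since $\nt^{(0,\rho)}$ contains only $O(|\mcI_n|^2) = O(m^2)$ summands, each uniformly small, $\nt^{(0,\rho)} \to 0$ for every $\rho$. For the converse, I would symmetrize: let $R$ be an independent uniform Rademacher coloring of $\mcI_n$ and consider the random variable $\nt^{(0,R)}$. Because distinct products $R(u)R(v)$ are centered and pairwise orthogonal, $\EE_R[\nt^{(0,R)}] = 0$ and $\mathrm{Var}_R(\nt^{(0,R)})$ equals, up to a constant, $\sigma_\triangle^{-2}\sum_{u,v \in \mcI_n} D_{uv}^2$. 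Since $\nt^{(0,R)}$ takes at most $2^{|\mcI_n|} \leq 2^m$ values, the statement ``$\nt^{(0,\rho)}\to 0$ for every $\rho$'' is equivalent to $\mathrm{Var}_R(\nt^{(0,R)}) \to 0$. A single influential edge $uv$ automatically has both endpoints in $\mcI_n$ (since $D_u, D_v \geq D_{uv}$), and its contribution alone keeps $\mathrm{Var}_R$ bounded below by a constant multiple of $(D_{uv}/\sigma_\triangle)^2$, producing the contrapositive.

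I expect the main obstacle to be item (3), not because any single estimate is delicate, but because one must recognize the symmetrization step: the pointwise quantifier ``for every $\rho$'' does not lend itself directly to a moment computation, and only after observing that the uniform bound $|\mcI_n| \leq m$ makes $\nt^{(0,R)}$ a random variable with $O(1)$ atoms can one repackage convergence for every $\rho$ as a single second-moment statement about $\nt^{(0,R)}$.
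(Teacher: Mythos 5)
Your proof follows the paper's argument essentially step for step: diagonalization plus rotational invariance (with the trace-zero centering) for the quadratic chaos, linearity for the Gaussian part, and the same symmetrization over a uniform random $\rho$ for item (3) — using pairwise uncorrelatedness of the products $R(u)R(v)$, the uniform bound $|\mcI_n|\le m$ to convert ``for every $\rho$'' into a single variance statement, and the key observation that an influential edge forces both endpoints into $\mcI_n$. The only divergence is cosmetic: your Cauchy--Schwarz step yields $\sigma_\rho^2 \le m$ rather than $1$, which is arguably the honest bound and costs nothing in the downstream tail estimates.
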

	
	We can now establish one particular case where $\nt(\mfN)$ does not converge to a Gaussian. If there is at least one large coefficient in the quadratic decomposition of $\nt^{(2)}$, then $\nt(X)$ will have a sub-exponential behavior and so cannot converge to a Gaussian.
	\begin{lemma} \label{lem:Hsubexp}
		Let $\{G_n\}_{n\geq 0}$ be a sequence of graphs with $\{\mathcal{I}_n\}$ the sets of their influential vertices. Suppose that $\sup\limits_n|\mathcal{I}_n| = m < \infty$. Further let $\lambda_{1,n}$ be as in~\cref{lem:Hdecomp}, and assume
		$$\lim\limits_{n\to \infty}{|\lambda_{1,n}|} = c> 0.$$
		Then, there exists some constant $c'>0$, such that for every partial coloring $\rho :\mathcal{I}_n \to \{\pm 1\}$, and all $t>0$ large enough,
		$$\PP\left(|\nt^{(\rho)}(\mfN)| > t\right) \geq e^{-c't},$$ 
		where $\mfN \sim \mcN(0,\mathrm{I}_{|V(G_n) \setminus \mathcal{I}_n|})$.
	\end{lemma}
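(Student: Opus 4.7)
The plan is to isolate within $\nt^{(\rho)}(\mfN)$ a single non-centered chi-squared term whose prefactor is comparable to $\lambda_{1,n}\to c>0$; such a term has genuinely sub-exponential tails, while the rest of the decomposition will be shown to be a bounded-variance ``nuisance'' independent of the chi-squared piece. The main technical obstacle I anticipate is \emph{uniformity}: the constant $c'$ must not depend on $\rho$ or $n$, which forces me to track that both the variance of this nuisance remainder and the additive constant produced by completing the square stay $O(1)$ as $n\to\infty$ and as $\rho$ ranges over the (at most $2^m$) partial colorings of $\mcI_n$.

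Concretely, apply an orthogonal change of variables to $\mfN$ via the first part of \cref{lem:Hdecomp} so that $\nt^{(2)}(\mfN) = \sum_{i\ge 1}\lambda_{i,n}(M_i^2 - 1)$ for iid $M_i\sim\mcN(0,1)$. Expressing the linear part in the same basis (using the second part of \cref{lem:Hdecomp}) as $\nt^{(1,\rho)}(\mfN) = \sum_i \beta_{i,\rho} M_i$ with $\sum_i\beta_{i,\rho}^2 = \sigma_\rho^2\le 1$, I complete the square on the $M_1$ coordinate:
$$\beta_{1,\rho}M_1 + \lambda_{1,n}(M_1^2 - 1) = \lambda_{1,n}\tilde M_1^2 - \lambda_{1,n} - \frac{\beta_{1,\rho}^2}{4\lambda_{1,n}},\qquad \tilde M_1 := M_1 + \frac{\beta_{1,\rho}}{2\lambda_{1,n}} \sim \mcN\!\left(\frac{\beta_{1,\rho}}{2\lambda_{1,n}},1\right).$$
Setting $Y := \sum_{i\ge 2}\bigl[\beta_{i,\rho}M_i + \lambda_{i,n}(M_i^2-1)\bigr]$ (independent of $\tilde M_1$) and $C_{\rho,n} := \nt^{(0,\rho)} - \lambda_{1,n} - \beta_{1,\rho}^2/(4\lambda_{1,n})$, one obtains the decomposition
$$\nt^{(\rho)}(\mfN) = C_{\rho,n} + \lambda_{1,n}\tilde M_1^2 + Y.$$

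Orthogonality of the $M_i$'s gives $\Var(Y)\le 1 + \Var(\nt^{(2)}(\mfN))$, which is $O(1)$ uniformly in $n$ by a direct computation from~\eqref{eq:countingmultirep} and~\eqref{eq:varrep}. Since $|\mcI_n|\le m$ and $\lambda_{1,n}\to c$, the constant $|C_{\rho,n}|$ is also uniformly bounded in both $n$ and $\rho$. Assume without loss of generality $\lambda_{1,n}>0$ (otherwise replace $\nt^{(\rho)}$ by $-\nt^{(\rho)}$) and choose $K$ so that $\PP(|Y|\le K)\ge 1/2$ via Chebyshev. Then independence of $Y$ and $\tilde M_1$ yields
$$\PP\bigl(|\nt^{(\rho)}(\mfN)|>t\bigr) \ge \tfrac{1}{2}\,\PP\!\left(|\tilde M_1|>\sqrt{(t+|C_{\rho,n}|+K)/\lambda_{1,n}}\right),$$
and because $\tilde M_1$ has mean of magnitude $O(1)$, Mills' ratio lower-bounds this last probability by $\exp\!\bigl(-t/(2\lambda_{1,n}) - O(\sqrt t)\bigr)$. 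Taking any fixed $c' > 1/(2c)$ delivers $\PP(|\nt^{(\rho)}(\mfN)|>t)\ge e^{-c't}$ for all sufficiently large $t$, uniformly in $\rho$ and $n$.
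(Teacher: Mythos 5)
Your proposal is correct, and it shares the paper's core strategy — diagonalize $\nt^{(2)}(\mfN)$ via \cref{lem:Hdecomp}, isolate the leading coordinate $M_1$ whose coefficient $\lambda_{1,n}\to c>0$ produces the sub-exponential tail, and control the remainder by Chebyshev — but it executes the treatment of the lower-order part differently. The paper keeps $\nt^{(0,\rho)}+\nt^{(1,\rho)}(\mfN)$ as a separate Gaussian of variance $\sigma_\rho^2\le 1$; since this piece is \emph{not} independent of $\nt^{(2)}(\mfN)$, the paper combines the two tail estimates through the inclusion–exclusion bound $\PP(A\cap B)\ge \PP(A)+\PP(B)-1$, which requires the auxiliary Gaussian upper tail bound $\PP(|\nt^{(0,\rho)}+\nt^{(1,\rho)}(\mfN)|\ge t)\le e^{-\alpha t^2}$. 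You instead express the linear part in the diagonalizing basis as $\sum_i\beta_{i,\rho}M_i$ and complete the square in $M_1$, so that the dominant term $\lambda_{1,n}\tilde M_1^2$ becomes genuinely independent of the remainder $Y$; this lets you multiply probabilities directly and replace the Gaussian concentration step with a single Chebyshev bound on $Y$. Your route is slightly cleaner and yields an explicit rate constant close to $1/(2\lambda_{1,n})$ (strictly speaking you should take $c'>1/(2(c-\delta))$ for the finitely many $n$ where $\lambda_{1,n}<c$, but any fixed admissible $c'$ suffices since the lemma only asserts existence); the paper's route avoids having to track the linear coefficients $\beta_{i,\rho}$ in the rotated basis. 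All the uniformity checks you flag — boundedness of $|C_{\rho,n}|$, of $\sum_i\beta_{i,\rho}^2$, and of $\sum_{i\ge 2}\lambda_{i,n}^2$ — match the estimates the paper uses, so there is no gap.
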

	\begin{proof}
		We fix some large $t > 10$, and bound $\PP\left(|\nt^{(2)}(\mfN)| \geq t\right)$ from below. Towards that, recall the expression $\nt^{(2)}(\mfN)$ from~\cref{lem:Hdecomp}, since we may assume $\lambda_{1,n}\geq \frac{c}{2}$, we obtain
		\begin{align*}
			\PP\left(|\nt^{(2)}(\mfN)| \geq t\right) &\geq  \PP\left(\lambda_{1,n}(M_1^2 - 1) \geq t+4\right)\PP\left(|\nt^{(2)}(\mfN) - \lambda_{1,n}(M_1^2 - 1)| \leq 4\right)\\
			&
			\geq \frac{1}{2}\PP\left(M_1^2 \geq \frac{t+4+\lambda_{1,n}}{\lambda_{1,n}}\right)\geq \frac{1}{2}\PP\left(M_1^2 \geq \frac{t+4+\frac{c}{2}}{\frac{c}{2}}\right)\geq\frac{1}{2}e^{-\beta t},
		\end{align*}
		where $\beta > 0$ is some constant, which depends on $c$.
		The first inequality uses the independence between $\{M_i\}$, the second inequality uses Chebyschev's inequality for $\nt^{(2)}(\mfN) - \lambda_{1,n}(M_1^2-1)$, which has $\mathrm{Var}(|\nt^{(2)}(\mfN) - M_1^2|) = \sum\limits_{i=2}^{\infty} \lambda_{i,n}^2 \leq 1$, and the fourth inequality is a tail bound for the chi-squared random variable $M_1^2$.
		But now, by~\cref{lem:Hdecomp}, for any $\rho$, $\nt^{(0, \rho)} + \nt^{(1, \rho)}(\mfN) \sim \mcN(\nt^{(0, \rho)}, \sigma^2_\rho)$ with $\sigma^2_\rho \leq 1$, and $\nt^{(0, \rho)} \leq m^2$. Thus, we have for some constant $\alpha > 0$, independent of $\rho$, and for all large $t > 10$,
		$$\PP\left(|\nt^{(0,\rho)} + \nt^{(1,\rho)}(\mfN)| \geq t\right) \leq e^{-\alpha t^2}.$$
		So,
		\begin{align*}
			\PP\left(|\nt^{(\rho)}(\mfN)| > t\right) &\geq \PP\left(|\nt^{(2)}(\mfN)| \geq 2t, |\nt^{(0, \rho)} + \nt^{(1, \rho)}(\mfN)| \leq t\right) \\
			&\geq \PP\left(|\nt^{(2)}(\mfN)| \geq 2t\right) + \PP\left(|\nt^{(0, \rho)} + \nt^{(1, \rho)}(\mfN)| \leq t\right) -1\\
			&\geq \frac{1}{2}e^{-\beta t} + 1- e^{-\alpha t^2} -1 = \frac{1}{2}e^{-\beta t} - e^{-\alpha t^2} \geq e^{-c't},
		\end{align*}
		for some $c' > 0$.
	\end{proof}
	We also need to handle the case where $\nt^{(2)}(\mfN)$ is not sub-exponential. In that case, as we shall show that $\nt^{(2)}(\mfN)$, and hence $\nt^{(\rho)}(\mfN)$, is approximately Gaussian, which makes $\nt(X)$ approximately a Gaussian mixture. When there is an influential edge this mixture will be non-trivial, thus not a Gaussian.
	\begin{lemma} \label{lem:Hmixt}
		Let $\{G_n\}_{n\geq 0}$ be a sequence of graphs with $\mathcal{I}_n$ the set of their influential vertices. Further let $\lambda_{1,n}$ be as in~\cref{lem:Hdecomp}, and assume
		$$\lim\limits_{n\to \infty}|{\lambda_{1,n}}| = 0.$$
		Then, there exists a subsequence $n_k$, such that for every partial coloring $\rho :\mathcal{I}_n \to \{-1,1\}$, 
		$\nt^{(\rho)}(\mfN) \xrightarrow[\mathrm{in\ law}]{n_k\to \infty}\mcN(\nt^{(0, \rho)}, \sigma'^2_\rho),$
		for some $\sigma'^2_\rho,$ and $\mfN \sim \mcN(0,\mathrm{I}_{|V(G_n) \setminus \mathcal{I}_n|})$.
	\end{lemma}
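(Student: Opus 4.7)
The plan is to use the decomposition from \cref{lem:Hdecomp} and analyze the characteristic function of $\nt^{(\rho)}(\mfN) - \nt^{(0,\rho)} = \nt^{(1,\rho)}(\mfN) + \nt^{(2)}(\mfN)$ by simultaneously diagonalizing the linear and quadratic parts in a common Gaussian basis. Write the quadratic form as $\nt^{(2)}(\mfN) = \mfN^T A_n \mfN - \mathrm{tr}(A_n)$ for some symmetric matrix $A_n$ with zero diagonal (it is multilinear), diagonalize $A_n = U_n \Lambda_n U_n^T$, and set $\xi := U_n^T \mfN \sim \mcN(0, \mathrm{I})$. In this basis
\[
\nt^{(2)}(\mfN) \stackrel{\mathrm{law}}{=} \sum_i \lambda_{i,n}(\xi_i^2 - 1), \qquad \nt^{(1,\rho)}(\mfN) \stackrel{\mathrm{law}}{=} \sum_i \widetilde b_{i,n}\, \xi_i,
\]
with $\{\xi_i\}$ independent, $\sum_i \widetilde b_{i,n}^2 = \sigma_\rho^2 \leq 1$ (by \cref{lem:Hdecomp}), and $2\sum_i \lambda_{i,n}^2 = \mathrm{Var}(\nt^{(2)}(\mfN))$ bounded uniformly in $n$.

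Next, by compactness, extract a subsequence $n_k$ (jointly for all finitely many colorings $\rho$ by a diagonal argument) along which $\sigma_\rho^2 \to s_1^2(\rho)$ and $\sum_i \lambda_{i,n_k}^2 \to \tau_0^2 \geq 0$. Since the $\xi_i$ are independent, the characteristic function of $Y_n := \nt^{(1,\rho)}(\mfN) + \nt^{(2)}(\mfN)$ factors as
\[
\log \mathbb{E}\!\left[e^{itY_n}\right] = \sum_i \left[ -it\lambda_{i,n} - \tfrac{1}{2}\log(1 - 2it\lambda_{i,n}) - \frac{t^2 \widetilde b_{i,n}^2}{2(1 - 2it\lambda_{i,n})} \right],
\]
using the standard formula $\mathbb{E}[e^{a\xi + b\xi^2}] = (1-2b)^{-1/2} \exp\!\left(\frac{a^2}{2(1-2b)}\right)$ for a standard Gaussian $\xi$, which is valid since $\mathrm{Re}(1 - 2it\lambda_{i,n}) = 1 > 0$.

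The main computational step is to Taylor expand each summand as $|\lambda_{1,n_k}| = \max_i|\lambda_{i,n_k}| \to 0$: the series $-it\lambda_i - \tfrac{1}{2}\log(1-2it\lambda_i) = -t^2 \lambda_i^2 + O\!\left(|t\lambda_i|^3\right)$ and $\frac{1}{1-2it\lambda_i} = 1 + O(|t\lambda_i|)$ yield the expansion
\[
\log \mathbb{E}\!\left[e^{itY_n}\right] = -t^2 \sum_i \lambda_{i,n}^2 - \frac{t^2}{2}\sum_i \widetilde b_{i,n}^2 + R_n(t),
\]
where $|R_n(t)| \lesssim |t|^3 \cdot |\lambda_{1,n}| \cdot \bigl(\sum_i \lambda_{i,n}^2 + \sum_i \widetilde b_{i,n}^2\bigr) \to 0$ by the hypothesis and the uniform bounds on the $\ell^2$ sums. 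Along the subsequence $n_k$, the leading terms converge to $-\tfrac{t^2}{2}(s_1^2 + 2\tau_0^2)$, so $\phi_{Y_{n_k}}(t) \to \exp\!\left(-\tfrac{t^2}{2}(s_1^2 + 2\tau_0^2)\right)$, and by L\'evy's continuity theorem $Y_{n_k} \xrightarrow{\text{in law}} \mcN(0, s_1^2 + 2\tau_0^2)$. Setting $\sigma'^2_\rho := s_1^2 + 2\tau_0^2$ gives the claimed conclusion. The main obstacle is rigorously controlling the error $R_n(t)$ uniformly on compact sets of $t$; this requires care because the sums are over infinitely many indices, but all error terms are dominated by $|t|^3 \cdot |\lambda_{1,n}|$ times a bounded quantity, making the passage to the limit routine once the Taylor expansion is organized properly.
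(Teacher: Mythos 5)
Your proof is correct, but it takes a genuinely different route from the paper's. The paper first disposes of the degenerate case $\mathrm{Var}(\nt^{(2)}(\mfN))\to 0$, then applies a Berry--Esseen bound to the independent sum $\sum_i\lambda_{i,n}(M_i^2-1)$ to show $\nt^{(2)}(\mfN)$ converges in law to a Gaussian, and finally invokes the asymptotic independence of Wiener chaoses (\cite[Theorem 3.1]{nourdin2012convergence}) to conclude that this Gaussian limit is independent of the first-chaos component $\nt^{(1,\rho)}(\mfN)$, so that the variances simply add. You instead diagonalize the quadratic form and express \emph{both} the linear and quadratic parts in the same rotated Gaussian basis $\xi=U_n^T\mfN$, which makes the summands over $i$ genuinely independent; you then compute the joint characteristic function in closed form and Taylor-expand as $|\lambda_{1,n}|\to 0$. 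This is more elementary and self-contained: it sidesteps the chaos-independence machinery entirely, it treats the dependence between $\nt^{(1,\rho)}(\mfN)$ and $\nt^{(2)}(\mfN)$ head-on rather than by citation, and it absorbs the degenerate case $\tau_0^2=0$ into the same computation. The error control you flag as the main obstacle is indeed routine: for fixed $t$ one has $|2t\lambda_{i,n}|\le 2|t|\,|\lambda_{1,n}|<1$ for $n$ large, the sums over $i$ are finite for each $n$ (the dimension is $|V(G_n)\setminus\mathcal{I}_n|$), and the remainders are dominated by $|t|^3|\lambda_{1,n}|$ times $\sum_i\lambda_{i,n}^2+\sigma_\rho^2\le \tfrac32$, uniformly on compact sets of $t$. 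Both arguments require passing to a subsequence to stabilize the limiting variances (and, strictly speaking, the bounded constants $\nt^{(0,\rho)}$), which you handle by the diagonal extraction over the at most $2^m$ colorings.
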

	\begin{proof}
		If $\mathrm{Var}(\nt^{(2)}(\mfN))\xrightarrow{n\to 0} 0$, then, by~\cref{lem:Hdecomp}
		$$\nt^{(\rho)}(\mfN) \xrightarrow[\mathrm{in\ law}]{n\to \infty} \nt^{(0, \rho)} + \nt^{(1, \rho)}(\mfN) \sim \mcN(0,\sigma_\rho^2).$$
		Thus, we suppose that $\limsup\limits_{n\to\infty} \mathrm{Var}(\nt^{(2)}(\mfN)) = \eta^2 > 0$.
		By~\cref{lem:Hdecomp}, we can write $\nt^{(2)}(\mfN)$ as $\sum\limits_{i=1}^\infty \lambda_{i,n}(M_i^2 - 1),$ and observe that by the Berry-Eseen inequality \cite{berry1941accuracy}
		$$d_{\mathrm{kol}}(\nt^{(2)}(\mfN),\widetilde \mcZ) \leq \max\limits_i \frac{\EE\left[(\lambda_{i,n}(M_i^2-1))^3\right]}{\EE\left[(\lambda_{i,n}(M_i^2-1))^2\right]}\frac{1}{\sqrt{\mathrm{Var}(\nt^{(2)}(\mfN))}} \leq \frac{\lambda_{1,n}}{\sqrt{\mathrm{Var}(\nt^{(2)}(\mfN))}},$$
		where we have passed to a subsequence and where $\widetilde \mcZ \sim \mcN(0,\eta^2)$.
		This implies that 
		$$\nt^{(2)}(\mfN) \xrightarrow[\mathrm{in\ law}]{n_k\to \infty}\mcN(0,\eta^2).$$
		Moreover, since $\nt^{(2)}$ is a homogeneous quadratic polynomial, $\nt^{(2)}(\mfN)$ belongs to the second Wiener chaos and its limit is independent of $\mfN$, see~\cite[Theorem 3.1]{nourdin2012convergence}. Thus, by~\cref{lem:Hdecomp}
		$$\nt^{(\rho)}(\mfN) \xrightarrow[\mathrm{in\ law}]{n_k\to \infty} \mcN(0,\sigma_\rho^2 + \eta^2).$$ \qedhere
	\end{proof}
	Having covered the two cases above we can now prove~\cref{prop:edgenoCLT}.
	\begin{proof}[Proof of~\cref{prop:edgenoCLT}]
		We separate the proof into two cases, depending on the behavior of $\nt^{(2)}(\mfN)$, as in ~\cref{lem:Hdecomp}. Recall that $|\mcI_n| \le m$ for some choice of $m$ uniformly bounded in $n$.
		
		{\bf Case I: } Suppose that $\limsup_{n\to \infty} |\lambda_{i,n}| > 0$. By passing to a subsequence, and applying~\cref{lem:Hsubexp}, we obtain  a constant $c' > 0$, such that for large enough $t > 0$,
		$$\PP\left(|\nt^{(\rho)}(\mfN)| > t\right) \geq e^{-c't},$$ 
		uniformly in $\rho.$  Thus, let $X\sim \mathrm{Rad}(\frac{1}{2})^{\otimes v(G_n)}$, so that $Z(\triangle, G_n) = \nt(X)$.
		By the law of total probability,
		$$\PP\left(|\nt(X)| > t\right) \ge \frac{1}{2^m}\sum\limits_{\rho: \mcI_n \to \{\pm1\}}\PP\left(|\nt^{(\rho)}(X)| > t\right)\xrightarrow{n\to\infty}\frac{1}{2^m}\sum\limits_{\rho: \mcI_n \to \{\pm1\}}\PP\left(|\nt^{(\rho)}(\mfN)| > t\right) \gtrsim e^{-c't}.$$
		The final limit follows by observing that for a given partial assignment $\rho$ of all influential vertices, per~\cref{lem:countinginfluence}, $\nt^{(\rho)}$ has no influential variables and thus~\cref{thm:invariance} implies convergence.
		Since $\PP\left(|\mcZ| \geq t\right) \leq 2e^{-t^2/2}$ for $\mcZ \sim \mcN(0, 1)$, we conclude the proof by choosing an appropriately large $t$.
		
		{\bf Case II: }
		Suppose that $\lim_{n\to \infty} |\lambda_{i,n}| = 0$, and let $X \sim \mathrm{Rad}(\frac{1}{2})^{\otimes V(G_n)}$ be as above. 
		Let $Y$ be a Gaussian mixture with law $$Y \overset{\text{law}}=\frac{1}{2^{|\mcI_n|}}\sum\limits_{\rho : \mcI_n \to \{\pm 1\}} \mcN(\nt^{(0, \rho)}, \sigma'^2_\rho),$$ where the variances 
		$\sigma'^2_\rho$ are as in~\cref{lem:Hmixt}. We claim that $\nt(X)$ and $Y$ are asymptotically the same.
		Indeed, by the law of total probability,
		\begin{align*}
			d_{\mathrm{kol}}(\nt(X),Y) &\leq  d_{\mathrm{kol}}(\nt(X),\nt(\mfN)) + d_{\mathrm{kol}}(\nt(\mfN),Y) \\
			& \leq \frac{1}{2^{|\mcI_n|}}\sum\limits_{\rho}d_{\mathrm{kol}}(\nt^{(\rho)}(X),\nt^{(\rho)}(\mfN)) + d_{\mathrm{kol}}(\nt(\mfN),Y).
		\end{align*}
		Since $\nt^{(\rho)}$ has no influential variables,~\cref{thm:invariance} implies 
		$$\frac{1}{2^{|\mcI_n|}}\sum\limits_{\rho}d_{\mathrm{kol}}(\nt^{(\rho)}(X),\nt^{(\rho)}(\mfN))\xrightarrow{n\to \infty} 0,$$
		while the condition on $\lambda_{1,n}$ and~\cref{lem:Hmixt} ensure that
		$$d_{\mathrm{kol}}(\nt(\mfN),Y)\xrightarrow{n\to \infty} 0.$$
		Thus, $d_{\mathrm{kol}}(\nt(X),Y)\xrightarrow{n\to \infty} 0.$ and it will be enough to show, that when $G_n$ contains an influential edge, then
		$d_{\mathrm{kol}}(Y,\mcZ)$ is bounded away from $0$ (where $\mcZ \sim \mcN(0, 1)$). Since $Y$ is a finite Gaussian mixture, and since the family of finite Gaussian mixtures is identifiable, \cite{teicher1963identifiability}, for $Y \xrightarrow[\text{in law}]{n\to\infty} \mcZ$ it is necessary and sufficient that, for every $\rho$,
		$$\nt^{(0, \rho)} \xrightarrow{n\to \infty} 0\text{ and } \sigma_\rho' \xrightarrow{n\to \infty} 1.$$
		The third item in~\cref{lem:Hdecomp} along with the existence of an influential edge rules out the possibility of $\nt^{(0, \rho)} \xrightarrow{n\to \infty} 0$ for every $\rho$. Hence $d_{\mathrm{kol}}(Y,\mcZ)$ must remain bounded away from $0$.
	\end{proof}
	\subsection{Revisiting the Fourth-Moment Phenomenon for $Z(\triangle, G_n)$}
	We now prove the second item of \cref{thm:inf-edge} and show that when $G_n$ does not contain an influential edge, then the fourth-moment phenomenon persists.
	\begin{prop}\label{p:lowinftriangle}
		Let ${G_n}$ be a sequence of graphs and set $M_n:= \max\limits_{e\in E(G)}\frac{D_e(\triangle,H)}{\sigma_\triangle}$. Then, for $\mcZ \sim \mcN(0,1)$,
		$$d_{\mathrm{kol}}(Z(\triangle,G_n),\mcZ) \lesssim \left(M_n^2  + \EE\left[Z(\triangle, G_n)^4\right] - 3 \right)^{1/5}.$$
	\end{prop}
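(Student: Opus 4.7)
The target bound interpolates between the low-edge-influence regime, where the fourth-moment discrepancy $\Delta_4:=\EE[Z(\triangle,G_n)^4]-3$ governs convergence, and the small-$\Delta_4$ regime, where the edge-level quantity $M_n$ is the sole potential obstruction to a CLT. Following the template of \cref{thm:mainquant}, the plan is to introduce the Gaussian analogue $\nt(\mfN)$ and control both legs of the triangle inequality in terms of $M_n$ alone, rather than vertex-level influences.

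First, since $\nt(x)\propto\sum_{uv\in E(G_n)}D_{uv}(\triangle,G_n)\,x_u x_v$ is a homogeneous quadratic multilinear polynomial whose coefficients $\alpha_{uv}$ are bounded by $M_n$, $\nt(\mfN)$ lies in the second Wiener chaos. The triangle case of \cref{lem:polyCLT} (combined with $d_{\mathrm{kol}}\lesssim\sqrt{W_1}$) gives $d_{\mathrm{kol}}(\nt(\mfN),\mcZ)\lesssim (\EE[\nt(\mfN)^4]-3)^{1/2}$. To pass from the Gaussian fourth moment to the Rademacher one, expand $\nt^4$ over ordered $4$-tuples of edges. By multilinearity, $\EE_\mfN$ and $\EE_X$ agree on every monomial in which each coordinate appears with multiplicity at most two; the discrepancy is supported on tuples in which some vertex $v$ is incident to all four chosen edges (where $\EE[\mfN_v^4]-\EE[X_v^4]=2$). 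Bounding two of the four incident coefficients by $M_n$ and summing the remaining pair against $\sum_e\alpha_e^2=O(1)$ yields
$$\bigl|\EE[\nt(\mfN)^4]-\EE[\nt(X)^4]\bigr|\lesssim M_n^2,$$
so $\EE[\nt(\mfN)^4]-3\lesssim \Delta_4+M_n^2$.

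The remaining leg is the comparison $d_{\mathrm{kol}}(\nt(X),\nt(\mfN))$. Applying \cref{thm:invariance} naively produces a bound in terms of $\max_v\infl_v(\nt)=\max_v\sum_u\alpha_{uv}^2\le 2M_n\max_v(D_v/\sigma_\triangle)$, which is not controlled by $M_n$ alone. The plan is to freeze the vertices with high $D_v/\sigma_\triangle$: for a threshold $\tau=\tau(M_n)$, let $\mcI=\{v:D_v/\sigma_\triangle\ge\tau\}$, condition on $X|_\mcI=\rho$, and apply the invariance principle to each fiber polynomial $\nt^{(\rho)}$. Since $\nt$ is quadratic, $\nt^{(\rho)}$ splits into a linear form in $X|_{V\setminus\mcI}$ and a quadratic with vertex influences at most $2M_n\tau$; averaging over $\rho$ and calibrating $\tau$ to balance the two sources of conditional error yields $d_{\mathrm{kol}}(\nt(X),\nt(\mfN))\lesssim M_n^{2/5}$. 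Combining via the triangle inequality,
$$d_{\mathrm{kol}}(Z(\triangle,G_n),\mcZ)\lesssim M_n^{2/5}+(\Delta_4+M_n^2)^{1/2}\lesssim (M_n^2+\Delta_4)^{1/5},$$
where the last step uses $M_n^{2/5}=(M_n^2)^{1/5}$ and $t^{1/2}\le t^{1/5}$ for $t\in[0,1]$ (we may assume $M_n^2+\Delta_4\le 1$, else the bound is trivial). The main obstacle is the freezing argument: one must calibrate $\tau$ so that averaging the conditional Gaussian approximations does not reintroduce a large Kolmogorov error. This is exactly where the edge-influence hypothesis plays its role -- absence of influential edges forces the conditional means $\EE[\nt(X)\mid X|_\mcI=\rho]$ to be uniform in $\rho$ to leading order, allowing the fiber-wise invariance estimates to aggregate cleanly.
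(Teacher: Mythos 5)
Your route (Gaussian surrogate $+$ fourth--moment theorem $+$ invariance) is genuinely different from the paper's, which never touches the invariance principle here: the paper imports from \cite{BHA22} both the Kolmogorov bound $d_{\mathrm{kol}}(Z(\triangle,G_n),\mcZ)\lesssim\big(\sum_s N(\triangle_s,G_n)/\sigma_\triangle^4+\sum_s N(H_s,G_n)/\sigma_\triangle^4\big)^{1/5}$ and the exact identity expressing $\EE[Z(\triangle,G_n)^4]-3$ as a signed combination of the same subgraph counts, and then observes that the only negatively--signed counts, the $s$-pyramids, satisfy $N(\triangle_s,G_n)\le\sum_e D_e(\triangle,G_n)^s\lesssim M_n^2\sigma_\triangle^4$. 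That single combinatorial estimate closes the proof. Your approach, by contrast, has a gap that I believe is fatal as written: the claim $\big|\EE[\nt(\mfN)^4]-\EE[\nt(X)^4]\big|\lesssim M_n^2$ is false. The discrepancy is supported on $4$-tuples of edges all incident to a common vertex $v$ whose outer endpoints pair up, and it is of order $\sum_v\big(\sum_u\alpha_{vu}^2\big)^2\asymp\max_v\infl_v(\nt)$, which is controlled by $M_n\cdot\max_v D_v(\triangle,G_n)/\sigma_\triangle$ and \emph{not} by $M_n^2$ --- exactly the obstruction you correctly flag for the invariance leg but then wave away for the moment--transfer step. Concretely, for the friendship graph ($n$ triangles glued at one vertex) one has $M_n\asymp n^{-1/2}\to0$ and $\EE[Z(\triangle,G_n)^4]\to3$ (it is a conditionally i.i.d.\ sum, hence asymptotically normal), yet $\nt(\mfN)$ is asymptotically of the form $\mfN_v\cdot G$ and $\EE[\nt(\mfN)^4]-3=\Theta(1)$. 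So the Gaussian fourth moment cannot be bounded by $M_n^2+(\EE[Z(\triangle,G_n)^4]-3)$, and the middle leg of your triangle inequality does not close.

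Two further problems: (i) the triangle case of \cref{lem:polyCLT} together with $d_{\mathrm{kol}}\lesssim\sqrt{W_1}$ yields exponent $1/8$ on the fourth--moment discrepancy (since $W_1\lesssim\sqrt{\Delta}+\Delta^{1/4}\lesssim\Delta^{1/4}$ for $\Delta\le1$), not the exponent $1/2$ you assert; a $\Delta^{1/8}$ bound is \emph{weaker} than the target $\Delta^{1/5}$ and does not imply it. (ii) The freezing argument is only a sketch: the set $\mcI=\{v:D_v/\sigma_\triangle\ge\tau\}$ can have size growing like $\tau^{-2}$, and aggregating $2^{|\mcI|}$ fiberwise Gaussian approximations into a single Kolmogorov bound requires controlling both the conditional means \emph{and} the conditional variances $\sigma_\rho^2$ uniformly in $\rho$; the paper only does this qualitatively (via identifiability of Gaussian mixtures, under a \emph{bounded} number of influential vertices) in the necessity direction, and no quantitative version is supplied here. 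I would recommend abandoning the Gaussian surrogate for this direction and arguing directly on the subgraph--count expansion, as the paper does.
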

	\begin{proof}
		We shall require several results from \cite{BHA22} concerning the fourth moment of $Z(\triangle,G_n)$, and its distance to a Gaussian.
		First, \cite[Theorem 1.1, Lemma 4.1]{BHA22} established that 
		\begin{equation} \label{eq:trianglekolm}
			d_{\mathrm{kol}}(Z(\triangle,G_n),\mcZ) \lesssim \left(\left(\frac{1+N(\triangle_4,G_n)}{\sigma_{\triangle}^4}\right)^{\frac{1}{4}}+\frac{\sum\limits_{s=1}^4N(\triangle_s, G_n) + \sum\limits_{s=1}^{28}N(H_s,G_n)}{\sigma^4_\triangle} \right)^{1/5}.
		\end{equation}
		Here $\triangle_s$ stands for an $s$-pyramid ($s$ triangles glued at a common edge) and $\{H_s\}_{s=1}^{28}$ is a list of $28$ graphs given in~\cite[Lemma 4.1]{BHA22}. In this lemma, the authors also show that there is a list of positive constants $\{\delta_s\}_{s=1}^4$ and $\{c_s\}_{s=1}^{28}$, such that 
		\begin{equation} \label{eq:trianglesfourth}
			\EE\left[Z(\triangle, G_n)^4\right] - 3 =\frac{1}{\sigma_\triangle^4}\left[-\sum\limits_{s=1}^4\delta_sN(\triangle_s, G_n) + \sum\limits_{s=1}^{28}c_sN(H_s,G_n)\right].
		\end{equation}
		Our key observation is that we can enumerate the $s$-pyramids by enumerating over edges $e\in E(G_n)$, and choosing $s$ different triangles which contain $e$. Recall that for $e\in E(G_n)$, there are precisely $D_e(\triangle,G_n)$ triangles which include $e$, and so
		$$
		N(\triangle_s,G_n) \leq \sum\limits_{e\in E(G_n)}D_e(\triangle,G_n)^s.$$
		We thus have, for every $1 \le s \le 4$,
		\begin{align*}
			\frac{N(\triangle_s, G_n)}{\sigma^4_\triangle} &\le \frac{\sum\limits_{e\in E(G_n)}D_e(\triangle,G_n)^4}{\sigma_{\triangle}^4} \le M_n^2 \cdot \frac{\sum\limits_{e\in E(G_n)}D_e(\triangle,G_n)^2}{\sigma_{\triangle}^2} \le 10 M_n^2,
		\end{align*}   
		where in the final inequality, we used the expression for $\sigma_{\triangle}^2$ given in~\cref{eq:varrep}.
		Plugging this into~\cref{eq:trianglesfourth} we get, for some constant $C > 0$,
		$$-C M_n^2 + \frac{\sum\limits_{s=1}^{28}N(H_s,G_n)}{C\sigma_\triangle^4}\leq \EE\left[Z(\triangle, G_n)^4\right] - 3.$$
		Applying the same bound to~\cref{eq:trianglekolm} yields that, 
		$$ d_{\mathrm{kol}}(Z(\triangle,G_n),\mcZ) \lesssim \left(M_n^{\frac{1}{2}}+ \frac{\sum\limits_{s=1}^{28}N(H_s,G_n)}{\sigma^4_\triangle}\right)^{1/5}.$$
		With the above two inequalities, we can now conclude with
		\begin{align*}
			d_{\mathrm{kol}}(Z(\triangle,G_n),\mcZ) \lesssim \left(M_n^{\frac{1}{2}} + \left(\EE\left[Z(\triangle, G_n)^4\right] - 3\right) \right)^{1/5}. 
		\end{align*}
	\end{proof}
 
	\section{Towards Characterizing Asymptotic Normality of General Monochromatic Subgraph Counts}
	In this section, we generalize some of the ideas presented in \cref{sec:triangles} to more general monochromatic subgraph counts and give evidence towards \cref{c:main}.
	\subsection{Graph Sequences Without Influential Pairs Exhibit Fourth-Moment Phenomena}
	We begin by generalizing \cref{p:lowinftriangle} to the general case. We shall prove \cref{thm:inf-edge-general} and show that the fourth-moment phenomenon holds whenever the sequence $G_n$ does not contain an influential pair. For convenience, we recall the statement of the theorem. 
	\begin{thm}\label{t:conj-one-dir}
		Consider a sequence of graphs $\{G_n\}$, a graph $H$, and let $\sigma_{H} = \sqrt{\mathrm{Var}(T(H, G_n))}.$ If $M_n := \max\limits_{e \in E(G_n)} \frac{D_e(\triangle, G_n)}{\sigma_{H}},$ then, for $\mcZ \sim \mcN(0, 1)$, 
		$$d_{\mathrm{kol}}(Z(H,G_n),\mcZ) \lesssim \left(M_n^2  + (\EE\left[Z(H, G_n)^4\right] - 3) \right)^{\frac{1}{20}}.$$
    \end{thm}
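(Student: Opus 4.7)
The plan is to generalize the proof of Proposition~\ref{p:lowinftriangle} from triangles to arbitrary $H$. The triangle argument rested on two ingredients extracted from~\cite{BHA22}: an explicit Kolmogorov bound of the form $d_{\mathrm{kol}}(Z(\triangle,G_n),\mcZ) \lesssim (\cdots)^{1/5}$ as a positive linear combination of normalized subgraph counts $N(L,G_n)/\sigma_\triangle^4$, and an identity expressing $\EE[Z(\triangle,G_n)^4]-3$ as a signed linear combination of the same counts. I would invoke analogues of these two ingredients for general $H$ that are implicit in the work of~\cite{DHM22}, giving
\[
 d_{\mathrm{kol}}(Z(H,G_n),\mcZ) \lesssim \Bigl(\sigma_H^{-4}\sum_{L\in\mathcal{L}_H} c_L \, N(L,G_n)\Bigr)^{1/20}
\]
with positive coefficients $c_L$, together with an identity
\[
 \EE[Z(H,G_n)^4]-3 = \sigma_H^{-4}\Bigl(-\sum_{L\in\mathcal{L}_H^-}\delta_L N(L,G_n) + \sum_{L\in\mathcal{L}_H^+}\gamma_L N(L,G_n)\Bigr)
\]
with $\delta_L,\gamma_L>0$. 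The exponent $1/20$ is inherited directly from the general-$H$ martingale-based Kolmogorov bound of~\cite{DHM22}, and their derivation produces these expressions irrespective of the number of colors --- the color hypothesis $c\geq 30$ in their main theorem is only used to ensure that the negative coefficients are absent or dominated.

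The central combinatorial point is that the graphs in $\mathcal{L}_H^-$ --- those contributing negatively to the fourth-moment discrepancy but positively to the Kolmogorov bound --- are precisely the \emph{pair-glued} configurations $L_k^H$: graphs obtained by taking $k \le 4$ copies of $H$ and identifying them along a single ordered pair $\w=(w_1,w_2)$ of distinct vertices. (This is the direct analogue of the pyramids $\triangle_s$ in the triangle case, which are $s$ triangles glued along a common edge.) Enumerating such configurations in $G_n$ by first choosing the shared pair $\w$ and then choosing $k$ copies of $H$ through $\w$ yields the bound
\[
 N(L_k^H,G_n) \;\lesssim\; \sum_{\w\in V(G_n)_2} D_\w(H,G_n)^k.
\]

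To convert this into a bound in terms of $M_n$, observe that the pair-indexed terms in the multilinear representation~\eqref{eq:countingmultirep} of $F_{H,G_n}$ alone contribute $\tfrac{1}{4^{r-1}}\sum_\w D_\w(H,G_n)^2$ to $\mathrm{Var}(F_{H,G_n}(X))$ when $X \sim \mathrm{Rad}(\tfrac12)^{\otimes v(G_n)}$, giving the variance lower bound $\sigma_H^2 \gtrsim \sum_\w D_\w(H,G_n)^2$. Combined with the elementary inequality $\sum_\w D_\w^k \le (\max_\w D_\w)^{k-2}\sum_\w D_\w^2 \le (M_n\sigma_H)^{k-2}\sum_\w D_\w^2$, this yields $N(L_k^H,G_n)/\sigma_H^{2k} \lesssim M_n^{k-2}$, and in particular $\lesssim M_n^2$ at the relevant $k=4$. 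Plugging back, the pair-glued terms in the Kolmogorov sum are absorbed into $O(M_n^2)$, while the remaining positive terms are controlled by $\EE[Z(H,G_n)^4]-3 + O(M_n^2)$ via the fourth-moment identity (mirroring the last display in the proof of Proposition~\ref{p:lowinftriangle}), giving the desired $(M_n^2 + \EE[Z(H,G_n)^4]-3)^{1/20}$.

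The principal obstacle is the bookkeeping step of explicitly producing the Kolmogorov inequality and the fourth-moment identity for general $H$ in the subgraph-count form above, and verifying that the shapes appearing with negative sign are exactly the pair-glued configurations. For triangles,~\cite[Lemma 4.1]{BHA22} enumerates a specific list of $32$ graphs by hand, and an analogous exhaustive enumeration would be impractical here. Instead, I would argue structurally: a negative contribution to $\EE[Z(H,G_n)^4]-3$ at $c=2$ arises from over-counting in quadruple products of copies of $H$ caused by repeated Rademacher evaluations on a shared subset of vertices, and tracking the smallest shared substructure (rather than the full $28$-graph enumeration of~\cite{BHA22}) shows that it must be a single pair. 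Once this structural claim is in place, the analytic content is a straightforward manipulation of sums of $D_\w^k$ as sketched above.
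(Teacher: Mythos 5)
Your overall architecture matches the paper's: import the $1/20$-rate Kolmogorov bound over $4$-joins from \cite{DHM22} (\cref{t:main-dhm}) together with a signed decomposition of $\EE[Z(H,G_n)^4]-3$ over the same joins, absorb the ``bad'' joins into $O(M_n^2)$, and read the remaining joins as a lower bound for the fourth-moment discrepancy. The analytic manipulations you sketch (enumeration of glued configurations through a shared pair, $\sum_{\w}D_{\w}(H,G_n)^4\le (M_n\sigma_H)^2\sum_{\w}D_{\w}(H,G_n)^2\lesssim M_n^2\sigma_H^4$, and the variance lower bound $\sigma_H^2\gtrsim\sum_{\w}D_{\w}(H,G_n)^2$) are correct and do appear in the paper's argument.

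The gap is your central structural claim: that the shapes contributing negatively are \emph{precisely} the configurations $L_k^H$ in which all copies of $H$ are identified along a single common pair. This is exactly the step you defer as ``the principal obstacle,'' and your proposed resolution --- ``tracking the smallest shared substructure shows it must be a single pair'' --- is not a proof, and as a characterization it is too restrictive. The paper neither proves nor needs such a characterization. Instead it establishes a dichotomy on tuples $\{\s_1,\s_2,\s_3,\s_4\}\in\mcG_H$: (i) if some $|\s_i\cap\s_j|\ge 2$, the normalized count of such joins is $O(M_n^2)$ --- and this class is strictly larger than your pair-glued class (for instance $\s_1,\s_2$ sharing one pair while $\s_3,\s_4$ share a different pair, the two halves connected only through single vertices), so your counting bound $N(L_k^H,G_n)\lesssim\sum_{\w}D_{\w}(H,G_n)^k$ does not cover it; the paper instead bounds by $\sum_{\rb_{12},\rb_3,\rb_4}D_{\rb_{12}}(H,G_n)^2D_{\rb_3}(H,G_n)D_{\rb_4}(H,G_n)$; (ii) if every $|\s_i\cap\s_j|\le 1$, then $\EE[Z_{\s_1}Z_{\s_2}Z_{\s_3}Z_{\s_4}]\ge 0$. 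Point (ii) is the real content: it requires the explicit inclusion--exclusion identity of \cref{l:pie-4} together with a delicate estimate using $|\cup_{j\ne i}\s_j|\ge v(H')-(r-2)$ and the range $4r-6\le v(H')\le 4r-3$, including a borderline case $v(H')=4r-6$ where the expectation is exactly zero. No sign information of this kind follows from your heuristic about over-counting, and without it the decomposition into $\mathcal{L}_H^{\pm}$ that you posit is unsubstantiated. (A smaller slip: the normalization $N(L_k^H,G_n)/\sigma_H^{2k}$ is not the one occurring in the fourth-moment or Kolmogorov expressions, which is always $\sigma_H^{-4}$; the correct conclusion at $k=4$ is $N(L_4^H,G_n)/\sigma_H^4\lesssim M_n^2$.)
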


	In~\cref{p:lowinftriangle}, we showed that the distance of $Z(\triangle, G_n)$ to a standard Gaussian $\mcZ$ was controlled by the maximum influence of a pair of vertices and the divergence of the fourth moment from $3 = \EE[\mcZ^4]$. The key idea in the argument was observing that the upper bound of \cite[Theorem 1.1, Lemma 4.1]{BHA22} on $d_{\mathrm{kol}}(Z(\triangle,G_n),\mcZ)$ was given by a sum over various connected \textit{$4$-joins} of triangles (ways to glue $4$ triangles together into a connected graph). These $4$-joins were observed to be a subset of those which appeared with some coefficients in the fourth-moment difference $\EE[Z(\triangle, G_n)^4] - 3$, and the only $4$-joins with negative coefficients in this fourth-moment difference had counts controlled by the influence of edges.
	
	The work of~\cite{BHA22} was extended to general $H$ in~\cite{DHM22}, enabling us to analogously extend~\cref{p:lowinftriangle} to general $H$. Before stating the needed result from~\cite{DHM22}, we begin first define the set of $4$-joins appearing in the error term.
	
	\begin{defn}[Good join] \label{def:good}  Given a simple connected graph $H$, a \emph{good join} of $H$ is a graph formed by joining $4$ copies of $H$, say $H_1,H_2,H_3,H_4$, such that for all  $i=1,2,3,4$
		\begin{align} \label{good}
			\bigg| V(H_1\cup H_2\cup H_3 \cup H_4)\bigg|-\bigg| V\big(\bigcup_{j \neq i} H_j\big)\bigg| \le |V(H)|-2.
		\end{align}
		and
		\begin{align}\label{vgood}
			|V(H_1\cup H_2\cup H_3 \cup H_4)| \le \min_{\pi\in \mathbb{S}_4} \bigg\{ |V(H_{\pi_1}\cup H_{\pi_2})|+|V(H_{\pi_3}\cup H_{\pi_4})|-2 \bigg\}.
		\end{align}
		In the expression above, we take the minimum over $\pi \in \mathbb{S}_4$, the set of permutations of $(1,2,3,4)$. We denote the collection of all good joins of $H$ by $\gj$, and by $N(\gj,G_n)$ the number of different good joins in $G_n$.
	\end{defn}
	\begin{thm}\label{t:main-dhm}\textnormal{(\cite[Theorem 1.4]{DHM22})}
		Fix a connected graph $H$ with $v(H)=r\ge 3$ and consider $Z(H,G_n)$ defined in \eqref{d:zhgn}. Then, we have that
		$$d_{\mathrm{kol}}(Z(H,G_n),\mcZ) \lesssim \left(\frac{N(\gj,G_n)}{\sigma_H(G_n)^4}\right)^{1/20} $$
	\end{thm}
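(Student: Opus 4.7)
The plan is to mirror the argument of \cref{p:lowinftriangle}, replacing the triangle-specific bound from \cite{BHA22} with the general-$H$ bound \cref{t:main-dhm}. Applying \cref{t:main-dhm} directly reduces the claim to the structural inequality
$$\frac{N(\gj, G_n)}{\sigma_H(G_n)^4} \;\lesssim\; M_n^2 + \bigl(\EE[Z(H,G_n)^4] - 3\bigr), \qquad (\star)$$
after which raising both sides to the $1/20$ power concludes. So the whole task is to prove $(\star)$.

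My first step toward $(\star)$ is to write down the analogue of \eqref{eq:trianglesfourth} for general $H$. Expanding $\EE\bigl[(T(H,G_n) - \EE[T(H,G_n)])^4\bigr]$ as an ordered quadruple sum over copies of $H$ in $G_n$ and collecting terms by the isomorphism type of the union yields an identity of the form
$$\sigma_H(G_n)^4\bigl(\EE[Z(H,G_n)^4] - 3\bigr) \;=\; \sum_{F \in \gj_{+}} c_F\, N(F, G_n) \;-\; \sum_{F \in \mcB} \delta_F\, N(F, G_n),$$
with $c_F, \delta_F > 0$, where $\gj_{+}$ are the joins contributing positively and $\mcB$ are the ``bad'' joins which pick up a net negative coefficient through cancellation with the centering of $T(H,G_n)$. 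By inspection of the enumeration underlying \cref{t:main-dhm}, one has $\gj \subseteq \gj_{+} \cup \mcB$, and the key structural feature is that each $F \in \mcB$ arises because some collection of its constituent copies of $H$ is concentrated on a single pair of vertices, generalizing the role of the pyramids $\triangle_s$ in the triangle case.

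The next step is to control the bad-join counts by pairwise influences. For $F \in \mcB$ whose $s_F \in \{2,3,4\}$ distinguished copies of $H$ share a common pair $\w \in V(G_n)_2$, enumeration gives
$$N(F, G_n) \;\lesssim\; \sum_{\w \in V(G_n)_2} D_\w(H, G_n)^{s_F}.$$
Since each $D_\w(H, G_n)$ is a non-negative integer, $D_\w^{s_F} \le D_\w^4$ whenever $D_\w \ge 1$, and using the definition $D_\w(H, G_n) \le M_n \sigma_H(G_n)$ together with the variance bound
$$\sum_{\w \in V(G_n)_2} D_\w(H, G_n)^2 \;\lesssim\; \sigma_H(G_n)^2$$
(obtained from \eqref{eq:countingmultirep} by isolating the quadratic homogeneous component $F_{H,G,2}$, cf.\ the $m=2$ case of \cref{lem:homogenvar}), I would conclude
$$\sum_{\w} D_\w(H, G_n)^{s_F} \;\le\; \sum_\w D_\w^4 \;\le\; M_n^2\, \sigma_H^2 \sum_\w D_\w^2 \;\lesssim\; M_n^2\, \sigma_H(G_n)^4,$$
so that $N(F, G_n)/\sigma_H(G_n)^4 \lesssim M_n^2$ for every $F \in \mcB$. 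Rearranging the fourth-moment identity then gives
$$\sum_{F \in \gj_{+}} c_F\, N(F, G_n) \;=\; \sigma_H^4\bigl(\EE[Z^4]-3\bigr) \,+\, \sum_{F \in \mcB}\delta_F\, N(F, G_n) \;\lesssim\; \sigma_H(G_n)^4\Bigl(M_n^2 + (\EE[Z^4]-3)\Bigr),$$
which combines with the $\mcB$ bound and the inclusion $\gj \subseteq \gj_{+} \cup \mcB$ to yield $(\star)$.

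The principal obstacle I anticipate is verifying the structural claim about $\mcB$, namely that every join contributing negatively to the fourth moment of $Z(H,G_n)$ is concentrated on a pair of vertices in the sense above. For triangles, the explicit case analysis of \cite[Lemma~4.1]{BHA22} does this by hand; for general $H$ one must extract the analogous information from the more elaborate join combinatorics of \cite{DHM22}, using that condition \eqref{good} already forces each $H_i$ in a good join to share at least two vertices with $\bigcup_{j\neq i}H_j$, and tracking which of these sharings collapse to a common pair. Once this bookkeeping is in place, the remainder of the argument is a direct generalization of the triangle case.
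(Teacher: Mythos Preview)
Your proposal does not address the stated theorem. \Cref{t:main-dhm} is quoted verbatim from \cite[Theorem~1.4]{DHM22}; the present paper does not prove it and simply invokes it as a black box. What you have written is instead an attempted proof of \cref{t:conj-one-dir} (equivalently \cref{thm:inf-edge-general}), in which \cref{t:main-dhm} is used as an input in the very first line. So as a proof of the statement in question, the proposal is circular: it assumes \cref{t:main-dhm} in order to conclude something else.

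If we instead read your proposal as an attempt at \cref{t:conj-one-dir}, the outline matches the paper's, but the crucial step you flag as ``the principal obstacle'' is exactly where the paper does real work, and your description of $\mcB$ is not quite right. You assert that every join contributing negatively to $\EE[Z(H,G_n)^4]-3$ has $s_F\in\{2,3,4\}$ copies of $H$ sharing a \emph{single common pair} of vertices, and you appeal to the join combinatorics of \cite{DHM22} to justify this. The paper does not rely on any such classification from \cite{DHM22}. Instead, it splits good joins $\{\bs_1,\bs_2,\bs_3,\bs_4\}$ according to whether some $|\bs_i\cap\bs_j|\ge 2$; those with such a pair are bounded by $M_n^2$ via the same influence argument you sketch, while for the remaining joins (all pairwise intersections of size $\le 1$) the paper uses the explicit inclusion--exclusion formula of \cref{l:pie-4} to compute $\EE[Z_{\bs_1}Z_{\bs_2}Z_{\bs_3}Z_{\bs_4}]$ and verify case by case that it is nonnegative. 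This direct computation is what replaces your unverified structural claim about $\mcB$; without it, your argument for $(\star)$ has a genuine gap.
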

	We will also use the following result (contained in the $c=2$ case of ~\cite[Proposition 4.3]{DHM22}).
	
	\begin{prop}[\cite{DHM22}]
		Fix connected graph $H$ with $v(H) = r$ and consider a graph sequence $\{G_n\}$. For $\s \in V(G_n)_r$,
		let $$W_{\s}:=\frac{1}{|\Aut(H)|}a_{H, \bs}(\ind\{1_{X_{=\s}}\}-{2}^{1-r})=\frac{1}{|\Aut(H)|}a_{H, \bs}Z_{\bs},$$
		where $Z_{\bs}:=\ind_{X_{=\s}}-2^{1-r},$ and observe that $Z(H,G_n)=\frac1{\sigma_H(G_n)}\sum_{\s} W_s.$ Then,
  \begin{align*}
      \EE[Z(H,G_n)^4]-3 &=  \frac1{\sigma_H(G_n)^4}\sum_{\{\s_1,\s_2,\s_3,\s_4\} \in \mcG_H} \EE[W_{\s_1}W_{\s_2}W_{\s_3}W_{\s_4}]\\
      &-\frac3{\sigma_H(G_n)^4} \underbrace{\sum_{ |\s_1 \cap \s_2 | \ge 2} \sum_{|\s_3 \cap \s_4 |\ge 2 }}_{\{ \s_1 \bigcup  \s_2\} \bigcap \{ | \s_3 \bigcup  \s_4\}|\ge 2} \EE[W_{\s_1}W_{\s_2}]\EE[W_{\s_3}W_{\s_4}].
  \end{align*}
		
	\end{prop}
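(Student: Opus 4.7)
The plan is to expand both sides as quadruple sums indexed by ordered 4-tuples $(\s_1,\s_2,\s_3,\s_4)$ of copies of $H$, and reorganize the difference using two combinatorial facts about the variables $W_\s$. First, since $Z(H,G_n) = \sigma_H(G_n)^{-1}\sum_\s W_\s$, direct expansion yields $\EE[Z(H,G_n)^4] = \sigma_H(G_n)^{-4}\sum_{\s_1,\s_2,\s_3,\s_4}\EE[W_{\s_1}W_{\s_2}W_{\s_3}W_{\s_4}]$. The normalization forces $\EE[Z^2]=1$, so $3 = 3\EE[Z^2]^2 = \sigma_H(G_n)^{-4}\cdot 3\sum_{\s_1,\s_2,\s_3,\s_4}\EE[W_{\s_1}W_{\s_2}]\EE[W_{\s_3}W_{\s_4}]$. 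Subtracting reduces the task to a purely combinatorial identity between the two quadruple sums.

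Next, I would use the Fourier--Walsh expansion of $Z_\s$, which is $2^{1-r}\sum_{U\subseteq\s,\,|U|\text{ even},\,|U|\ge 2}X^U$ read off from the $\hind_H$ polynomial in \eqref{eq:homogenousindicator}, to prove three facts about the $W_\s$'s: (i) $\EE[W_\s W_{\s'}]$ vanishes unless $|\s\cap\s'|\ge 2$, since the only contributing monomial pairs satisfy $U=U'\subseteq\s\cap\s'$ with $|U|\ge 2$; (ii) $\EE[W_{\s_1}W_{\s_2}W_{\s_3}W_{\s_4}]$ vanishes whenever some $\s_i$ shares fewer than two vertices with $\bigcup_{j\ne i}\s_j$, which is precisely the failure of condition \eqref{good}, by tracking which Rademachers can appear in an even number of the $U_i$'s; and (iii) a factorization lemma: if $|(\s_1\cup\s_2)\cap(\s_3\cup\s_4)|\le 1$ then $\EE[W_{\s_1}W_{\s_2}W_{\s_3}W_{\s_4}] = \EE[W_{\s_1}W_{\s_2}]\EE[W_{\s_3}W_{\s_4}]$. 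For (iii), empty overlap is immediate from independence; for a single shared vertex $v$, decompose $Z_{\s_1}Z_{\s_2} = A_{12}+B_{12}X_v$ and similarly for $Z_{\s_3}Z_{\s_4}$, then check that $\EE[B_{ij}]=0$ because $B_{ij}$ is built from odd-degree monomials (stripping the $X_v$ factor from an even-degree monomial of $Z_{\s_i}$ and multiplying by an even-degree monomial of $Z_{\s_j}$ with $v\notin\s_j$).

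With these three facts, I would split $\sum_{\s_1,\s_2,\s_3,\s_4}\EE[W_{\s_1}W_{\s_2}W_{\s_3}W_{\s_4}]$ by classifying each ordered 4-tuple as: (a) violating \eqref{good}, which contributes zero by (ii); (b) a good join, which contributes the first term on the right-hand side; or (c) satisfying \eqref{good} but failing \eqref{vgood} for some pairing, in which case (iii) reduces the 4-point expectation to a product of two 2-point ones for that pairing. In parallel, split $3\sum_{\s_1,\s_2,\s_3,\s_4}\EE[W_{\s_1}W_{\s_2}]\EE[W_{\s_3}W_{\s_4}]$ according to whether $|(\s_1\cup\s_2)\cap(\s_3\cup\s_4)|\ge 2$ or $\le 1$. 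The factor $3$ on the subtracted term then matches the three labelled pairings $(12)(34),(13)(24),(14)(23)$ of the four ordered copies, and the class (c) contributions cancel exactly the $\le 1$ portion of the subtracted sum, leaving the desired identity.

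The main obstacle is the bookkeeping in this last step: a 4-tuple in class (c) may fail \eqref{vgood} under one, two, or all three pairings simultaneously, and each failing pairing produces a factorization. One must verify by inclusion/exclusion that summing these factorizations over the three pairings of ordered labels yields precisely the factor of $3$ on the subtracted $\le 1$ portion, with no over- or undercounting, and hence that multiply-disconnected 4-tuples contribute consistently on both sides. Once this match-up is done, the identity assembles cleanly.
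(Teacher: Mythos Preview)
The paper does not give its own proof of this proposition; it is quoted verbatim as a special case of \cite[Proposition 4.3]{DHM22}. So there is no in-paper argument to compare against.

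Your outline is essentially correct and would reproduce the identity. The three Fourier--Walsh facts (i)--(iii) are exactly right and are the heart of the matter. The only point that is left genuinely open in your sketch is the ``bookkeeping obstacle'' at the end, and this can be closed cleanly: if condition \eqref{good} holds, then condition \eqref{vgood} can fail for \emph{at most one} of the three unordered pairings. Indeed, suppose two pairings, say $(12)(34)$ and $(13)(24)$, both had overlap at most one. Then every one of the six pairwise intersections $\s_i\cap\s_j$ is contained in one of these two small overlaps, hence has size at most one; chasing the constraint $|\s_i\cap(\cup_{j\ne i}\s_j)|\ge 2$ for each $i$ then forces two distinct vertices $v,w$ to lie in all four $\s_i$'s, contradicting $|(\s_1\cup\s_2)\cap(\s_3\cup\s_4)|\le 1$. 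With this uniqueness in hand, the indicator $\mathbf{1}[\text{\eqref{good} holds, \eqref{vgood} fails}]$ is exactly the sum over the three pairings of the indicator that that particular pairing has overlap at most one; relabelling the ordered tuple for each pairing and using (iii) (together with (i) to throw away zero summands) gives the factor $3$ without any inclusion--exclusion. So there is no over- or undercounting to worry about, and your scheme goes through.
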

	
	In order to prove~\cref{t:conj-one-dir}, we will need to understand more delicately which joins make positive and negative contributions to the fourth moment error above.
	
	\begin{lemma}\label{l:pie-4}
		For $\{\s_1, \s_2, \s_3, \s_4\} \in \mcG_H$, we have that
		$$    \EE[Z_{\s_1}Z_{\s_2}Z_{\s_3} Z_{\s_4}] =- 2^{5 - 4r} + 2^{3-2r}\sum_{i < j} 2^{- |\s_i \cup \s_j|} - 2^{2-r} \sum_{i = 1}^4 2^{- |\cup_{j \neq i} \s_j|} + 2^{1 - |\cup_i \s_i|} $$
	\end{lemma}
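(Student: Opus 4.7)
The plan is direct expansion. Set $p := 2^{1-r}$, so that $Z_{\s_i} = \ind\{E_{\s_i}\} - p$, where $E_{\s_i}$ denotes the event that every vertex of $\s_i$ receives the same color and $\PP(E_{\s_i}) = p$. Expanding the fourfold product,
\begin{equation*}
\prod_{i=1}^4 Z_{\s_i} \;=\; \sum_{S \subseteq [4]} (-p)^{4-|S|}\, \ind\Bigl\{\bigcap_{i \in S} E_{\s_i}\Bigr\},
\end{equation*}
so taking expectations gives $\EE\bigl[\prod_{i=1}^4 Z_{\s_i}\bigr] = \sum_{S \subseteq [4]} (-p)^{4-|S|}\, \PP\bigl(\bigcap_{i \in S} E_{\s_i}\bigr)$.

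The key step is computing $\PP(\bigcap_{i \in S} E_{\s_i})$ for each $S$. Because vertices are colored independently and uniformly in $\{\pm 1\}$, this event factorizes over the connected components of the hypergraph with vertex set $\bigcup_{i \in S}\s_i$ and hyperedges $\{\s_i : i \in S\}$. A component on $k$ vertices is monochromatic with probability $2 \cdot 2^{-k}$, so
\begin{equation*}
\PP\Bigl(\bigcap_{i \in S} E_{\s_i}\Bigr) \;=\; 2^{c(S) - |\cup_{i \in S}\s_i|},
\end{equation*}
where $c(S)$ denotes the number of components. To recover the announced formula one needs $c(S) = 1$ for every non-empty $S \subseteq [4]$, and this is precisely where the good-join hypothesis $\{\s_1,\s_2,\s_3,\s_4\} \in \mcG_H$ enters. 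For $|S|=4$, connectedness is built into the notion of a join of copies of $H$. For $|S|=3$, condition~\eqref{good} from \cref{def:good} forces each $\s_i$ to meet $\bigcup_{j \neq i} \s_j$ in at least two vertices, so removing any single copy still leaves a connected hypergraph. For $|S|=2$, condition~\eqref{vgood}, applied to the bipartition that separates $\{i,j\}$ from $\{k,\ell\}$, yields the required pairwise overlap $\s_i \cap \s_j \neq \emptyset$.

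Finally, substituting $\PP(\cap_{i \in S} E_{\s_i}) = 2^{1-|\cup_{i \in S}\s_i|}$ (valid for every non-empty $S$) into the expansion and regrouping by $|S|$ yields the four terms of the claimed identity: the $|S| \in \{0,1\}$ contributions collapse into a constant in $p$, while the $|S|=2,3,4$ contributions match the three remaining sums once one substitutes $p = 2^{1-r}$ and absorbs the factor of $2$ from $2^{1-|\cdot|}$ into the prefactors. The main obstacle is the connectivity check for $|S|=2$: the good-join conditions are phrased in terms of global vertex-set sizes rather than pairwise intersections, so deducing $\s_i \cap \s_j \neq \emptyset$ from~\eqref{vgood} requires a short case analysis over the bipartition $\pi \in \mathbb{S}_4$, and making this reduction clean is what separates a two-line expansion from a complete proof.
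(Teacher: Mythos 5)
Your expansion is the same inclusion--exclusion step the paper performs, and your refinement $\PP\bigl(\bigcap_{i\in S}E_{\s_i}\bigr)=2^{c(S)-|\bigcup_{i\in S}\s_i|}$ is exactly the point the paper's computation glosses over. However, both reductions you then rely on fail. First, the conditions of \cref{def:good} do \emph{not} force $c(S)=1$ for every nonempty $S$: condition \eqref{vgood} only gives $|(\s_i\cup\s_j)\cap(\s_k\cup\s_\ell)|\ge 2$ for each bipartition, which is compatible with $\s_i\cap\s_j=\emptyset$, and \eqref{good} controls how much each copy overlaps the union of the \emph{other three}, not the connectivity of triples. Concretely, for $H=K_4$ the tuple $\s_1=\{1,2,5,6\}$, $\s_2=\{1,2,3,4\}$, $\s_3=\{5,6,7,8\}$, $\s_4=\{5,6,7,9\}$ satisfies \eqref{good} and \eqref{vgood}, yet $\s_2\cap\s_3=\emptyset$ and the triple $\{\s_2,\s_3,\s_4\}$ is disconnected. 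Second, even granting $c(S)=1$ for all nonempty $S$, your $|S|\in\{0,1\}$ contributions sum to $p^4-4p^4=-3\cdot 2^{4-4r}$, which is not the stated constant $-2^{5-4r}$; the final ``regrouping'' you assert does not reproduce the claimed identity, and you never verify the constant term.

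For context, the paper's own proof substitutes $2^{1-|\bigcup_{j\in A}\s_j|}$ for $\EE\bigl[\prod_{j\in A}\ind\{X_{=\s_j}\}\bigr]$ for \emph{every} $A\subseteq[4]$ --- including $A=\emptyset$, where the expectation is $1$ rather than $2$, and including disconnected sub-collections. The spurious $+2^{4-4r}$ at $A=\emptyset$ is precisely what produces the constant $-2^{5-4r}$, and in some configurations (e.g.\ four edges forming a $4$-cycle) it exactly cancels the discrepancies coming from disconnected pairs. Your more careful bookkeeping shows that this cancellation is the real content of the identity and is not automatic: for the $K_4$ tuple above, the right-hand side of the stated formula evaluates to $7\cdot 2^{-12}$ while the left-hand side is $5\cdot 2^{-12}$. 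So the gap is not merely the ``short case analysis'' you defer --- the connectivity claims are false as stated, and closing the argument requires either restricting the class of tuples or carrying the component counts $c(S)$ through the computation explicitly.
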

	\begin{proof}
		This follows by the principle of inclusion-exclusion:
		\begin{align*}
			\EE[Z_{\s_1}Z_{\s_2}Z_{\s_3} Z_{\s_4}] & = \sum_{A \subset [4]} (-1)^{4 - |A|} \EE\left[ 2^{(1-r)(4-|A|)}\prod_{j\in A} \ind\{X_{=\s_{j}}\}\right] \\ & = \sum_{A \subset [4]} (-1)^{4 - |A|} 2^{(1-r)(4-|A|)+1-|\bigcup_{j\in A} {\s}_{j}|}. \\
			&= 2^{4(1-r) + 1} - 4 \cdot 2^{3(1 - r) + 1 - r} + \sum_{i < j} 2^{2(1-r) + 1 - |\s_i \cup \s_j|} - \sum_{i = 1}^4 2^{(1-r) + 1 - |\cup_{j \neq i} \s_j|} + 2^{1 - |\cup_i \s_i|} \\
			&= - 2^{5 - 4r} + 2^{3-2r}\sum_{i < j} 2^{- |\s_i \cup \s_j|} - 2^{2-r} \sum_{i = 1}^4 2^{- |\cup_{j \neq i} \s_j|} + 2^{1 - |\cup_i \s_i|}.
		\end{align*}
	\end{proof}
	
	Similarly to~\cref{eq:varrep}, we can more generally express $\Var(T(H, G_n))$ in terms of total influences of subsets of vertices, as computed explicitly in~\cite{DHM22}, and is the analog of \eqref{eq:varrep} for more general shapes.
	
	\begin{obs}\label{eq:var-gen}
		For connected graph $H$ with $v(H) = r,$ we have that
		\begin{equation}
			\sigma_H(G_n)^2 = \Var(T(H, G_n)) = \Theta \left(\sum_{k = 2}^r \sum_{\rb \in V(G_n)_k} D_{\rb}(H, G_n)^2 \right) = \Theta\left( \sum_{\rb \in V(G_n)_2} D_{\rb}(H, G_n)^2 \right)
		\end{equation}    
	\end{obs}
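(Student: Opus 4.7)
The plan is to compute $\Var(T(H, G_n))$ directly and reduce both it and $\sum_{k=2}^r \sum_{\rb \in V(G_n)_k} D_\rb^2$ to the common quantity $\sum_{\bs_1, \bs_2} a_{H, \bs_1} a_{H, \bs_2} \binom{|\bs_1 \cap \bs_2|}{2}$, up to constants depending only on $r$ and $|\Aut(H)|$ (which are fixed as $n \to \infty$). Bilinearly expanding gives
$$\Var(T(H, G_n)) = \frac{1}{|\Aut(H)|^2} \sum_{\bs_1, \bs_2 \in V(G_n)_r} a_{H, \bs_1} a_{H, \bs_2} \Cov\bigl(\ind\{X_{=\bs_1}\}, \ind\{X_{=\bs_2}\}\bigr),$$
so the first task is to control the covariance as a function of $j := |\bs_1 \cap \bs_2|$ (viewing $\bs_1,\bs_2$ as sets).

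For $j = 0$ the two indicators are independent, so the covariance vanishes. For $j \geq 1$, the event ``$\bs_1$ monochromatic and $\bs_2$ monochromatic'' coincides with ``$\bs_1 \cup \bs_2$ monochromatic'' (a shared vertex forces the common color), so $\EE[\ind\{X_{=\bs_1}\} \ind\{X_{=\bs_2}\}] = 2^{1-(2r-j)}$, while each marginal is $2^{1-r}$. Subtracting yields $\Cov = 2^{1-2r}(2^j - 2)$. This expression vanishes at $j = 0, 1$ — crucially explaining why single-vertex intersections contribute nothing — and on the finite range $j \in \{2, \ldots, r\}$ one has $2^j - 2 = \Theta\bigl(\binom{j}{2}\bigr)$. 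Substituting yields $\Var(T(H, G_n)) = \Theta\bigl(\sum_{\bs_1, \bs_2} a_{H, \bs_1} a_{H, \bs_2} \binom{|\bs_1 \cap \bs_2|}{2}\bigr)$.

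The last step is a switch of summation. Interpreting $\binom{|\bs_1 \cap \bs_2|}{2}$ as counting unordered pairs $R \subseteq \bs_1 \cap \bs_2$, and using the identity $\sum_{\bs \in V(G_n)_r,\ \bs \supseteq R} a_{H, \bs} = |\Aut(H)|\, D_R(H, G_n)$ (each unlabeled copy of $H$ containing $R$ contributes $|\Aut(H)|$ orderings $\bs$), I obtain
$$\sum_{\bs_1, \bs_2} a_{H, \bs_1} a_{H, \bs_2} \binom{|\bs_1 \cap \bs_2|}{2} = |\Aut(H)|^2 \sum_{R :\, |R|=2} D_R(H, G_n)^2,$$
and passing from unordered pairs $R$ to ordered pairs $\rb \in V(G_n)_2$ costs a constant factor of $2$. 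The same dualization handles the middle expression: the identity $\sum_{k=2}^{|T|} \binom{|T|}{k} = 2^{|T|} - 1 - |T|$ is $\Theta\bigl(\binom{|T|}{2}\bigr)$ on $|T| \in \{2, \ldots, r\}$, so summing the dualization over $k \in \{2, \ldots, r\}$ gives $\sum_{k=2}^r \sum_{\rb \in V(G_n)_k} D_\rb^2 = \Theta\bigl(\sum_{\bs_1, \bs_2} a_{H, \bs_1} a_{H, \bs_2} \binom{|\bs_1 \cap \bs_2|}{2}\bigr)$ as well, matching both of the other expressions. The only bookkeeping hurdle is tracking the comparison constants between $2^j - 2$, $2^j - 1 - j$, $\binom{j}{2}$, $k!$, and $|\Aut(H)|$, but these all depend solely on $r$, so they are harmlessly absorbed into the $\Theta(\cdot)$ notation.
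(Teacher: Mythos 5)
Your proof is correct. The covariance computation is right: for ordered tuples $\bs_1,\bs_2$ with $j=|\bs_1\cap\bs_2|$ as sets, independence gives zero covariance at $j=0$, and for $j\ge 1$ the joint event is exactly $\{X_{=\bs_1\cup\bs_2}\}$, giving $\Cov = 2^{1-2r}(2^j-2)$, which is nonnegative and comparable to $\binom{j}{2}$ uniformly over the bounded range $j\le r$; since every term in the bilinear expansion is nonnegative, the termwise comparison legitimately passes to the sums. The dualization $\sum_{\bs\supseteq R}a_{H,\bs}=|\Aut(H)|\,D_R(H,G_n)$ and the identity $\sum_{k=2}^{j}\binom{j}{k}=2^j-1-j=\Theta\bigl(\binom{j}{2}\bigr)$ then correctly identify all three quantities up to constants depending only on $r$. (One cosmetic slip: the formula $2^{1-2r}(2^j-2)$ vanishes only at $j=1$, not at $j=0$; at $j=0$ the covariance vanishes by independence, as you already said, so nothing breaks.) For comparison, the paper does not prove this observation at all — it is stated as an analogue of \eqref{eq:varrep} and attributed to the explicit computation in [DHM22]. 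The route most native to the paper's own machinery would be Parseval applied to the multilinear representation \eqref{eq:countingmultirep}: orthonormality of the monomials $x^{\rb}$ under the Rademacher measure gives the exact identity $\Var(T(H,G_n))=\tfrac{1}{4^{r-1}}\sum_{k\ \mathrm{even},\,k\ge 2}\sum_{\rb}D_{\rb}(H,G_n)^2$ with no covariance computation, after which one still needs exactly your dualization step to compare the even-$k$ sum with the all-$k$ sum and with the $k=2$ sum. Your argument is more elementary and self-contained; the Parseval route yields sharp constants and explains structurally why odd $k$ never appears.
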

	
	With these pieces, we can show that if $G_n$ lacks influential pairs and the fourth moment of $Z(H, G_n)$ is asymptotically $3$, then $T(H, G_n)$ is asymptotically Gaussian.
	
	\begin{proof}[Proof of~\cref{t:conj-one-dir}]
		
		We consider two cases on $\{\s_1, \s_2, \s_3, \s_4\} \in \mcG_H$, letting the corresponding $4$-join by this $4$-tuple be the graph $H'$.
		\begin{enumerate}
			\item Suppose that for some $i \neq j$ $|\s_i \cap \s_j| \ge 2$ (without loss of generality take $i = 1, j = 2$). Then, by summing over pairs of vertices $\w_{12} \subset \s_1 \cap \s_2$, we get 
			\footnotesize 
			$$\frac{N(H', G_n)}{\sigma_H(G_n)^4} \le \frac{1}{\sigma_H(G_n)^4}\sum\limits_{\substack{\rb_{12}, \rb_3, \rb_4 \in V(G_n)_2}} D_{\rb_{12}}(H, G_n)^2 D_{\rb_3}(H, G_n) D_{\rb_4}(H, G_n) \lesssim M_n^2 \frac{ \sum\limits_{\rb} D_{\rb}(H, G_n)^2}{\sigma_H(G_n)^2} \lesssim M_n^2,$$
			\normalsize 
			where the final inequality follows by~\cref{eq:var-gen}. 
			\item Else $\{\s_1, \s_2, \s_3, \s_4\} \in \mcG_H$ forms a good join $H'$, where for every $i \neq j \in [4]$, $|\s_i \cap \s_j| \le 1$. The only contribution of such joins to $\EE[Z(H, G_n)^4] - 3$ is given by $\frac1{\sigma_H^4} \EE[W_{\s_1}W_{\s_2}W_{\s_3}W_{\s_4}]$. Since every good join is connected, we must have $v(H') \le r + 3(r-1) = 4r - 3$. Further since this good join has no pair of copies of $H$ that share more than $1$ vertex, we additionally have $v(H') \ge 4r - 6$. In order for $\{\s_1, \s_2, \s_3, \s_4\}$ to be a good join, for at least $4$ of the $6$ pairs $\{\s_i, \s_j\}_{i \neq j}$, we must have $|\s_i \cap \s_j| = 1$. By~\cref{l:pie-4}, we have that 
			\begin{align*}
				\EE[Z_{\s_1}Z_{\s_2}Z_{\s_3} Z_{\s_4}] &=- 2^{5 - 4r} + 2^{3-2r}\sum_{i < j} 2^{- |\s_i \cup \s_j|} - 2^{2-r} \sum_{i = 1}^4 2^{- |\cup_{j \neq i} \s_j|} + 2^{1 - |\cup_i \s_i|} \\
				&\overset{(*)}\ge 2^{5 - 4 r} + 2^{3 - 2 r} \cdot (4 \cdot 2^{-(2 r - 1)} + 2\cdot 2^{-2 r}) - 2^{2 - r}\cdot  (4 \cdot 2^{-(v(H') - (r - 2))}) + 2^{1 - v(H')} \\
				&= 7 \cdot 2^{4 - 4r} - 2^{1 - v(H')},
			\end{align*}
			where in (*), we used that $|\s_i \cap \s_j| = 1$ for at least $4$ pairs $\{\s_i, \s_j\}_{i \neq j}$, and that since $\{\s_1, \s_2, \s_3, \s_4\}$ forms a good join, for any $i \in [4],$ $|\cup_{j \neq i} \s_j| \ge v(H') - (r-2)$. Since $7 \cdot 2^{4 - 4r} - 2^{1 - v(H')} > 0$ for $v(H') \ge 4r - 5$, we turn our attention to the case where $v(H') = 4r - 6$. In this case,  we necessarily have $|\s_i \cap \s_j| = 1$ for all $6$ pairs, and the above bound tightens to
			\begin{align*}
				\EE[Z_{\s_1}Z_{\s_2}Z_{\s_3} Z_{\s_4}] &=- 2^{5 - 4r} + 2^{3-2r}\sum_{i < j} 2^{- |\s_i \cup \s_j|} - 2^{2-r} \sum_{i = 1}^4 2^{- |\cup_{j \neq i} \s_j|} + 2^{1 - |\cup_i \s_i|} \\
				&\ge 2^{5 - 4 r} + 2^{3 - 2 r} \cdot (6 \cdot 2^{-(2 r - 1)}) - 2^{2 - r}\cdot  (4 \cdot 2^{-(v(H') - (r - 2))}) + 2^{1 - v(H')} \\
				&= 0.
			\end{align*}
			The above estimates imply that for whenever $\{\s_1, \s_2, \s_3, \s_4\} \in \mcG_H$ forms a good join $H'$, where for every $i \neq j \in [4]$, $|\s_i \cap \s_j| \le 1$, we have $\EE[Z_{\s_1}Z_{\s_2}Z_{\s_3} Z_{\s_4}] \ge 0$.
		\end{enumerate}
		Let $\widetilde\mcG_H \subset \mcG_H$ be the tuples $\{\s_1, \s_2, \s_3, \s_4\} \in \mcG_H$ that form a good join $H'$, where for every $i \neq j \in [4]$, $|\s_i \cap \s_j| \le 1$. Putting the pieces together from the above two cases, we can conclude that
		$$\EE[Z(H, G_n)^4] - 3 \gtrsim - M_n^2 + \frac{1}{\sigma_H(G_n)^4}\sum_{H' \in \widetilde\mcG_H} N(H', G_n).$$
		Combining the above inequality with~\cref{t:main-dhm} gives the desired result:
		\begin{align*}
			d_{\mathrm{kol}}(Z(H,G_n),\mcZ) &\lesssim \left(\frac{N(\gj,G_n)}{\sigma_H(G_n)^4}\right)^{1/20} \\
			&\lesssim \left( M_n^2 + \left(\EE[Z(H, G_n)^4] - 3\right) \right) ^{1/20}.
		\end{align*}
	\end{proof}
	
	\subsection{Strongly Influential Pairs Forbid CLTs} \label{sec:strong}
	We now give some evidence towards~\cref{c:main}, by showing the conjecture holds for a stronger notion of influence. Namely, we show that if $\{G_n\}$ has \textit{strongly influential pairs}, those forbid a CLT.
	\begin{defn}[Strongly influential vertices]
		A $k$-tuple of vertices $\rb \in V(G_n)_k$ is $\eps$-\textit{strongly influential}, if $D_{\rb}(H, G_n) \ge \eps N(H, G_n).$
	\end{defn}
	It is readily seen that $N(H,G_n) \gtrsim \sigma_H(G_n)$, which justifies calling the tuple in the above definition \emph{strongly} influential. Below we observe that when $G_n$ contains an influential pair, then $Z(H,G_n)$ will be bounded almost surely, \emph{uniformly in $n$}. Clearly, $Z(H,G_n)$ is not a Gaussian in this case.
	\begin{prop} \label{prop:stronginf}
		Consider a fixed $H$ and graph sequence $\{G_n\}$. If for all $N$, there exists some $n > N$ such that $G_n$ has a strongly influential pair of vertices $\rb = (v, w) \in V(G_n)_2$, then $Z(H, G_n)  \centernot{\xrightarrow{\text{in law}}} \mcN(0, 1)$.
	\end{prop}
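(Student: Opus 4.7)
The plan is to exploit the fact that, under the strong influence hypothesis, $Z(H,G_n)$ is uniformly bounded (deterministically) by a constant that does not depend on $n$, which contradicts convergence to an unbounded Gaussian limit. So rather than analyzing any subtle distributional property of $Z(H,G_n)$, the entire argument reduces to an a.s.\ boundedness estimate.

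First I would lower bound $\sigma_H(G_n)$. Let $\rb_0 = (v,w)$ be the strongly influential pair, so $D_{\rb_0}(H,G_n) \ge \eps N(H,G_n)$. By the variance formula recorded in Observation \ref{eq:var-gen},
\begin{equation*}
\sigma_H(G_n)^2 \;=\; \Theta\!\left(\sum_{\rb \in V(G_n)_2} D_{\rb}(H,G_n)^2\right) \;\ge\; c_r\, D_{\rb_0}(H,G_n)^2 \;\ge\; c_r\, \eps^2\, N(H,G_n)^2
\end{equation*}
for a constant $c_r>0$ depending only on $r=v(H)$. Second, since $0 \le T(H,G_n) \le N(H,G_n)$ and $\EE[T(H,G_n)] = 2^{1-r} N(H,G_n) \in [0,N(H,G_n)]$, I obtain the trivial deterministic bound $|T(H,G_n) - \EE[T(H,G_n)]| \le N(H,G_n)$. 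Dividing by $\sigma_H(G_n)$ and combining with the previous estimate gives
\begin{equation*}
|Z(H,G_n)| \;\le\; \frac{N(H,G_n)}{\sigma_H(G_n)} \;\le\; \frac{1}{\sqrt{c_r}\,\eps} \;=:\; M
\end{equation*}
almost surely, where $M$ depends only on $r$ and $\eps$, not on $n$.

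Finally, I would close the argument by comparing tail probabilities. If we had $Z(H,G_n) \xrightarrow{\text{in law}} \mcN(0,1)$, then since the Gaussian CDF is continuous, we could conclude
\begin{equation*}
\PP(|Z(H,G_n)| > M) \;\xrightarrow{n\to\infty}\; \PP(|\mcZ| > M) \;>\; 0.
\end{equation*}
However, by the previous step, $\PP(|Z(H,G_n)| > M) = 0$ for every $n$, which is a contradiction. There is really no main obstacle here: the only nontrivial input is the lower bound $\sigma_H(G_n) \gtrsim \eps N(H,G_n)$, which is immediate from Observation \ref{eq:var-gen}, and everything else is trivial arithmetic. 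The takeaway I would emphasize in the paper is that strong influence automatically forces $Z(H,G_n)$ to live in a fixed bounded interval, so no CLT is possible -- this is why the conjecture with the weaker (non-strong) notion of influence is substantially harder: without the $D_{\rb}\ge \eps N$ condition, $\sigma_H(G_n)$ need not be comparable to $N(H,G_n)$ and the deterministic boundedness of $Z$ is lost.
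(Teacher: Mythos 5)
Your proof is correct and follows essentially the same route as the paper's: lower-bound $\sigma_H(G_n)$ by $\eps N(H,G_n)$ via the variance formula of \cref{eq:var-gen}, deduce that $Z(H,G_n)$ is uniformly bounded almost surely, and conclude that convergence to a Gaussian is impossible. Your write-up is in fact slightly more careful in the final step (the explicit tail-probability contradiction and the two-sided bound $|T-\EE T|\le N$), but the argument is the same.
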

	\begin{proof}
		Fix connected simple graph $H$ on $r$ vertices. Consider graph  $G_n$ with strongly influential vertex pair $\mathbf{z} \in V(G_n)_2,$ so that $D_{\bz}(H, G_n) \gtrsim T(H, G_n).$
		Per~\cref{eq:var-gen}, we can lower bound the variance as 
		$$\sigma_{H}(G_n)^2 \gtrsim D_{\bz}(H, G_n)^2 \gtrsim N(H, G_n)^2$$
		However, since $N(H, G_n) \ge T(H, G_n)$, this implies that
		$$Z(H, G_n) = \frac{T(H, G_n) - \EE T(H, G_n)}{\sigma_H(G_n)} \le \frac{T(H, G_n)}{\sigma_H(G_n)} = O(1)$$
		is bounded almost surely. Therefore, $Z(H, G_n)  \centernot{\xrightarrow{\text{in law}}} \mcN(0, 1)$.
	\end{proof}
    \begin{rmk}
    There are some other more specific cases where influential pairs can forbid a CLT. For example, suppose that $H = K_4$, so that as in \cref{eq:countingmultirep}, $\MP = F_{H,G,0} + F_{H,G,2} + F_{H,G,4}$.
    The calculations in \cref{lem:homogenvar} show that 
    $$\EE\left[F_{H,G,4}^2\right] \leq N(H,G) \text{ and } \EE\left[F^2_{H,G,2}\right] \gtrsim \max\limits_{v\in V(G)} D_v^2(H,G).$$
    Thus, let $\{G_n\}$ be a sequence of graphs satisfying $D_v(H,G_n) \gg \sqrt{N(H,G_n)}$, and set $\widetilde{\MP}$ as in \cref{lem:polyCLT}. Then, $\widetilde{F_{H,G_n}} = \widetilde{F_{H,G_n,2}} + o(1)$ is asymptotically a quadratic function, and the assertions of \cref{sec:triangles} hold in this case as well. Note that the requirement $D_v(H,G_n) \gg \sqrt{N(H,G_n)} $ is much weaker than having a strongly influential pair, yet still stronger than $D_v(H,G_n) = \Omega\left(\sigma_H(G_n)\right)$.
    \end{rmk}

\subsection{Aligning Higher Moments with Influential Edges}\label{s:exbad}
In~\cref{prop:edgenoCLT}, we showed that the existence of an influential edge precludes convergence of $Z(\triangle, G_n)$ to a standard Gaussian $\mcZ$. Our approach was to study the tail behavior of an associated quadratic form in Gaussian variables and to apply the identifiability of Gaussian mixtures. For more general $H$, per~\cref{c:main}, we expect influential pairs of vertices to similarly preclude a central limit theorem. One very natural strategy to try to exclude a CLT is to study higher moments of $Z(H, G_n)$ and demonstrate that e.g. if $\EE[Z(H, G_n)^4] \simeq 3$, then the $6$th (or higher) moment must disagree with that of a standard Gaussian. Below, we give an example that illustrates a limitation of this type of plan by exhibiting a sequence of graphs where the first $6$ moments of $Z(\triangle, G_n)$ all line up with a standard Gaussian, but per~\cref{prop:edgenoCLT} we can see that a CLT fails to hold.

We give a sequence of graphs $\{G_n\}$ where $\EE[Z(\triangle, G_n)^4] \to 3, \EE[Z(\triangle, G_n)^6] \to 15$, but $Z(\triangle, G_n)$ can be shown to not be asymptotically normal.
We let $G_n = S_n \sqcup P_n \sqcup B_n$ be the disjoint union of three graphs that we define below:
\begin{align*}
	V(S_n) &= \{v_k\}_{k \in [4]} \cup \left\{ t^{(k)}_i\right\}_{i \in [n], k \in [4]}, \\
	&\qquad E(S_n) = \left\{ \left(v_k, v_{k+1 (4)}\right)\}_{k \in [4]} \cup \{ \left(v_{k}, t^{(k)}_i\right), \left(t^{(k)}_i, v_{k+1 (4)} \right) \right\}_{i \in [n], k \in [4]} \\
	V(P_n) &= \{w, x\} \cup \left\{ p_i \right\}_{i \in [cn]}, \\
	&\qquad E(P_n) = \left\{ \left(w, x\right)\} \cup \{ \left(w, p_i\right), \left(p_i, x \right) \right\}_{i \in [cn]} \\
	V(B_n) &= \{y_i\}_{i \in [bn^2]} \cup \left\{ \ell^{(ij)}_0, \ell^{(ij)}_i, \ell^{(ij)}_j, r^{(ij)}_0, r^{(ij)}_i, r^{(ij)}_j\right\}_{i < j \in [bn^2]} \\
	&\qquad E(B_n) = \left\{ (y_i, \ell_0^{(ij)}), (y_i, \ell_i^{(ij)}),(\ell_i^{(ij)}, \ell_0^{(ij)}), (y_j, \ell_0^{(ij)}), (y_j, \ell_j^{(ij)}),(\ell_j^{(ij)}, \ell_0^{(ij)})\right\}_{i < j \in [bn^2]} \\
	&\qquad \qquad \qquad \quad \cup \left\{ (y_i, r_0^{(ij)}), (y_i, r_i^{(ij)}),(r_i^{(ij)}, r_0^{(ij)}), (y_j, r_0^{(ij)}), (y_j, r_j^{(ij)}),(r_j^{(ij)}, r_0^{(ij)})\right\}_{i < j \in [bn^2]} 
\end{align*}
$G_n$ is pictured in~\cref{f:fig-gn}. 
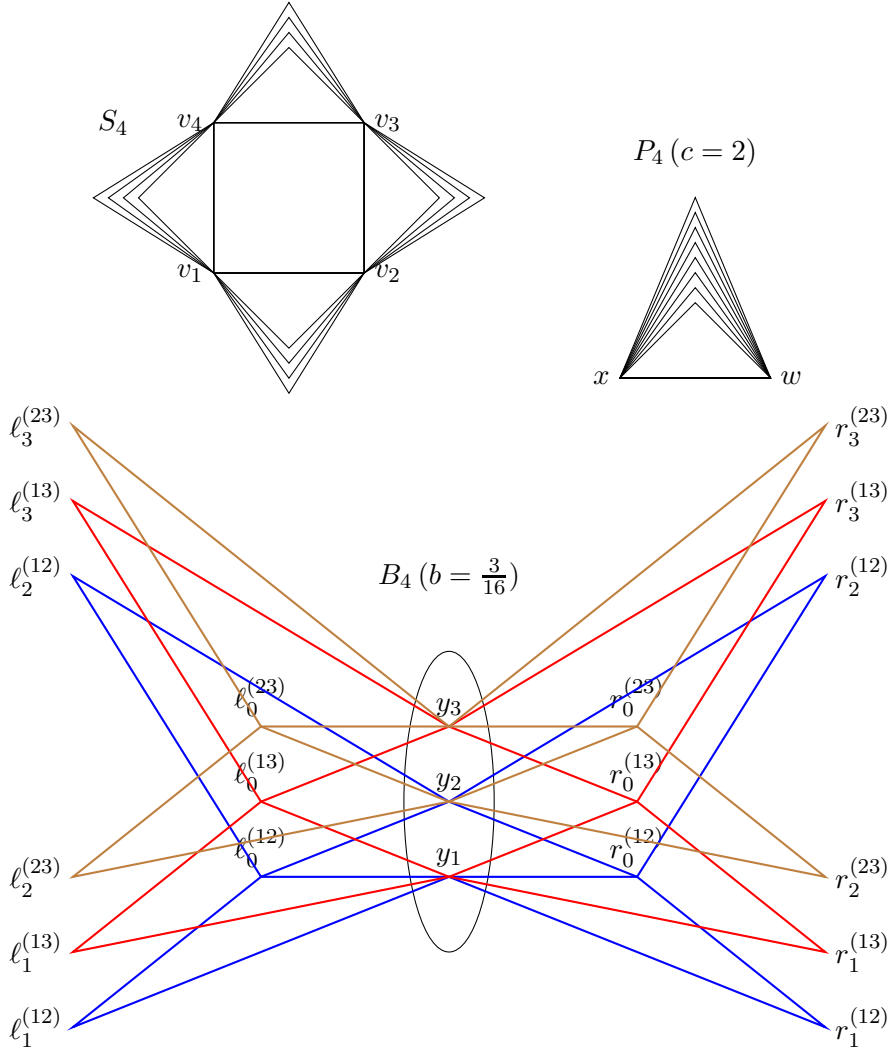
\begin{figure}%[width=0.8\linewidth]
\begin{tikzpicture}
    % Define vertices
    \coordinate (v1) at (0,0);
    \coordinate (v2) at (2,0);
    \coordinate (v3) at (2,2);
    \coordinate (v4) at (0,2);
    
    % Draw edges of the 4-cycle
    \draw (v1) -- (v2) -- (v3) -- (v4) -- cycle;
    
    % Label vertices of the 4-cycle
    \node at (v1) [left] {$v_1$};
    \node at (v2) [right] {$v_2$};
    \node at (v3) [right] {$v_3$};
    \node at (v4) [left] {$v_4$};
    \node at (-1, 2) [left] {$S_4$};
    
    % Draw additional triangles
    \draw (v1) -- (1,-1) -- (v2) -- cycle;
    \draw (v1) -- (1,-1.2) -- (v2) -- cycle;
    \draw (v1) -- (1,-1.4) -- (v2) -- cycle;
    \draw (v1) -- (1,-1.6) -- (v2) -- cycle;
        % Draw additional triangles
    \draw (v2) -- (3,1) -- (v3) -- cycle;
    \draw (v2) -- (3.2,1) -- (v3) -- cycle;
    \draw (v2) -- (3.4,1) -- (v3) -- cycle;
    \draw (v2) -- (3.6,1) -- (v3) -- cycle;
        % Draw additional triangles
    \draw (v3) -- (1,3) -- (v4) -- cycle;
    \draw (v3) -- (1,3.2) -- (v4) -- cycle;
    \draw (v3) -- (1,3.4) -- (v4) -- cycle;
    \draw (v3) -- (1,3.6) -- (v4) -- cycle;
        % Draw additional triangles
    \draw (v1) -- (-1,1) -- (v4) -- cycle;
    \draw (v1) -- (-1.2,1) -- (v4) -- cycle;
    \draw (v1) -- (-1.4,1) -- (v4) -- cycle;
    \draw (v1) -- (-1.6,1) -- (v4) -- cycle;
\end{tikzpicture}
\hspace{30pt}
\begin{tikzpicture}
    % Define vertices

    \coordinate (v3) at (2,2);
    \coordinate (v4) at (0,2);
    
    % Draw edges of the 4-cycle
    \draw  (v3) -- (v4) ;
    
    \node at (v3) [right] {$w$};
    \node at (v4) [left] {$x$};
    \node at (1, 5) {$P_4\,(c = 2)$};
    
        % Draw additional triangles
    \draw (v3) -- (1,3) -- (v4) -- cycle;
    \draw (v3) -- (1,3.2) -- (v4) -- cycle;
    \draw (v3) -- (1,3.4) -- (v4) -- cycle;
    \draw (v3) -- (1,3.6) -- (v4) -- cycle;
    \draw (v3) -- (1,3.8) -- (v4) -- cycle;
    \draw (v3) -- (1,4.0) -- (v4) -- cycle;
    \draw (v3) -- (1,4.2) -- (v4) -- cycle;
    \draw (v3) -- (1,4.4) -- (v4) -- cycle;
\end{tikzpicture}
\begin{tikzpicture}
    % Define vertices
    \foreach \i in {1,2,3} {
        \coordinate (y\i) at (0,\i);
        \node at (y\i) [above] {$y_{\i}$};
    }
    \node at (0, 5) {$B_4\, (b = \frac{3}{16})$};
    
    % Draw oval around vertices
    \draw[black, thin] (-0, 2) ellipse (0.6 and 2);
    \def\mycolors{{"blue", "red", "brown"}}
    % Define other vertices
    \foreach \i in {1,2,3} {
        \foreach \j in {1,2,3} {
            \ifnum\j>\i
                \pgfmathtruncatemacro\result{\i+\j}
              \coordinate (a\i\j) at (2.5,\i + \j - 2);
                \coordinate (b\i\j) at (-2.5,\i + \j -2);
                \coordinate (z\i\j) at (5,\i + \j - 4);
                \coordinate (z\j\i) at (5,\j + \i + 2);
                \coordinate (w\i\j) at (-5,\i + \j - 4);
                \coordinate (w\j\i) at (-5,\j + \i + 2);
                \node at (a\i\j) [above] {$r_0^{(\i\j)}$};
                \node at (z\i\j) [right] {$r_{\i}^{(\i\j)}$};
                \node at (z\j\i) [right] {$r_{\j}^{(\i\j)}$};
                \node at (b\i\j) [above] {$\ell_0^{(\i\j)}$};
                \node at (w\i\j) [left] {$\ell_{\i}^{(\i\j)}$};
                \node at (w\j\i) [left] {$\ell_{\j}^{(\i\j)}$};
            \fi
        }
    }
    
    Draw triangles
    \foreach \i in {1,2, 3} {
     \foreach \j in {1,2,3} {
        \ifnum\j>\i
        \pgfmathtruncatemacro\result{\i+\j}
                    \pgfmathparse{\mycolors[\result-3]}
                \edef\mycolor{\pgfmathresult}  
            \draw[\mycolor, thick] (y\i) -- (a\i\j) -- (z\i\j) -- cycle;
            \draw[\mycolor, thick] (y\j) -- (a\i\j) -- (z\j\i) -- cycle;
            \draw[\mycolor, thick] (y\i) -- (b\i\j) -- (w\i\j) -- cycle;
            \draw[\mycolor, thick] (y\j) -- (b\i\j) -- (w\j\i) -- cycle;
      %  \draw[red] (y1) -- (b12\i) -- (w\i) -- cycle;
        %    \draw[blue] (y\j) -- (a\i\j) -- (z\j\i) -- cycle;
       % \draw[red] (y2) -- (b12\i) -- (w\i) -- cycle;
         \fi
        }
    }
\end{tikzpicture}
\caption{An example of a graph $G_n$ with $n = 4, b = 3/16, c = 2$ as a disjoint union of graphs $S_4, P_4, B_4$ as pictured}
\label{f:fig-gn}
\end{figure}

Let $T'(\triangle, G_n) = 4(T(\triangle, G_n) - \EE[T(\triangle, G_n)])$ (centered but not normalized). Per~\cref{eq:varrep}, 
$$\EE[T'(\triangle, G_n)^2] = \sum_{e \in E(G)} D_e^2(\triangle, G).$$
We then compute the fourth moment by expanding the representation of $T(\triangle, G_n)$ of~\cref{d:thgn}, noticing that some terms vanish. Simplifying yields the following:
\begin{align*}
	\EE[T'(\triangle, G_n)^4] &= 3 \left(\sum_{e \in E(G_n)} D_e^2(\triangle, G_n) \right)^2 - 2 \sum_{e \in E(G_n)} D_e^4(\triangle, G_n) \\
	&\qquad + 24 \sum_{C_4 \in \mcC_4(G_n)} \prod_{e \in E(C_4)} D_{e}(\triangle, G_n) + o(\sigma_{\triangle}^4), \intertext{where the latter term is a sum over all of the $4$-cycle's in $G$, requiring that no two $4$-cycles in the sum have the same edge set. We similarly compute the sixth moment as follows}
	\EE[T'(\triangle, G_n)^6] &= 720 \sum_{C_6 \in \mcC_6(G_n)} \prod_{e \in E(C_6)} D_e(\triangle, G_n) - 240 \sum_{C_4 \in \mcC_4(G_n)} \left(\prod_{e \in C_4} D_e(\triangle G_n) \right) \left(\sum_{e \in C_4} D_e(\triangle, G_n)^2 \right) \\
	&\qquad + 15 \left( \sum_{e \in E(G_n)} D_e(\triangle, G_n)^2 \right)^3 + 16 \sum_{e \in E(G_n)} D_e(\triangle, G_n)^6 + o(\sigma_{\triangle}^6).
\end{align*}
When we specialize in the sequence of graphs chosen above, we obtain the following moments.
\begin{align*}
	\EE[T'(\triangle, G_n)^4] &= 3 \EE[T'(\triangle, G_n)^2]^2 - 2(4n^4 + (cn)^4) + 24\left(n^4 + \binom{bn^2}{2}\right) + o(n^4) \\
	\EE[T'(\triangle, G_n)^6] &= -240 \cdot 4 n^6 + 15 \EE[T'(\triangle, G_n)^2]^3 + 16 (4n^6 + (cn)^6 ) + o(n^6)
\end{align*}
Then, if we choose $b = \sqrt{\frac{2}{3} \left(7^{2/3}-2\right)} \approx 1.0518, c = \sqrt{2} \cdot 7^{1/6} \approx 1.956$, we find that as $n \to \infty$, $\EE[T'(\triangle, G_n)^4] \longrightarrow 3 \EE[T'(\triangle, G_n)^2]^2 $ and $\EE[T'(\triangle, G_n)^6] \longrightarrow 15 \EE[T'(\triangle, G_n)^2]^3 $, meaning that the normalized versions satisfy $\EE[Z(\triangle, G_n)^4] \to 3, \EE[Z(\triangle, G_n)^6] \to 15$.

However, we can see that $T(\triangle, G_n)$ is not asymptotically normal, by noting that $T(\triangle, G_n) = T(\triangle, S_n) + T(\triangle, P_n) + T(\triangle, B_n)$ is the sum of three independent (in fact with disjoint vertex support) random variables. Note that by considering whether or not $w, x \in V(P_n)$ receive the same or different colors, we see that the random variable $T(\triangle, P_n)$ is equivalent in distribution to
$$T(\triangle, P_n) \equiv \frac12  \delta_0 + \frac12 \Bin(n, 1/2).$$
By a similar argument to that appearing in Section 4.2 of~\cite{BHA22}, independence of the three constituents of $T(\triangle, G_n)$ and the above characterization of $T(\triangle, P_n)$ (which is in the limit a $2$-point discrete distribution) yields that $T(\triangle, G_n)$ is not asymptotically Gaussian.

	\bibliographystyle{plain}
	\bibliography{bib.bib}
	\appendix
	\section{Bounding the Kolmogorov Distance by the \texorpdfstring{$d_2$} Distance}
	\begin{lemma} \label{lem:koltod2}
		Let $\mcZ \sim \mcN(0,1)$ and $Y$ be any random variable, then,
		$$\sup\limits_{t\in \RR}\left|\PP\left(Y \leq t\right) - \PP\left(\mcZ \leq t\right)\right| \lesssim d_2(Y,\mcZ)^{\frac{1}{3}}.$$
	\end{lemma}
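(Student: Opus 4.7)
The plan is to bound the Kolmogorov distance by approximating the discontinuous indicator $\ind\{\cdot \le t\}$ by a $C^2$ function whose $C^2$-norm we can control, apply the $d_2$ bound to this smooth test function, and optimize a smoothing parameter. This is the standard Stein-type smoothing trick, adapted from $W_1$ to $d_2$.

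Concretely, for $t \in \RR$ and a parameter $\delta \in (0,1]$ I would construct a function $f_{t,\delta}: \RR \to [0,1]$ such that $f_{t,\delta}(x) = 1$ for $x \le t$, $f_{t,\delta}(x) = 0$ for $x \ge t+\delta$, and $f_{t,\delta}$ is $C^2$ with $\|f_{t,\delta}'\|_\infty \lesssim 1/\delta$ and $\|f_{t,\delta}''\|_\infty \lesssim 1/\delta^2$. A concrete choice is $f_{t,\delta}(x) = \psi((x-t)/\delta)$ for a fixed $C^2$ bump-like template $\psi$ equal to $1$ on $(-\infty, 0]$ and $0$ on $[1, \infty)$. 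Then $\|f_{t,\delta}\|_{C^2} \lesssim 1/\delta^2$.

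Next, by definition of $d_2$ applied to the rescaled function $\delta^2 f_{t,\delta}$, we have
\begin{equation*}
\left|\EE[f_{t,\delta}(Y)] - \EE[f_{t,\delta}(\mcZ)]\right| \lesssim \frac{d_2(Y,\mcZ)}{\delta^2}.
\end{equation*}
Since $\ind\{x \le t\} \le f_{t,\delta}(x) \le \ind\{x \le t+\delta\}$, we get
\begin{equation*}
\PP(Y \le t) \le \EE[f_{t,\delta}(Y)] \le \EE[f_{t,\delta}(\mcZ)] + \frac{C \, d_2(Y,\mcZ)}{\delta^2} \le \PP(\mcZ \le t+\delta) + \frac{C \, d_2(Y,\mcZ)}{\delta^2}.
\end{equation*}
Because the Gaussian density is bounded by $1/\sqrt{2\pi}$, we have $\PP(\mcZ \le t+\delta) - \PP(\mcZ \le t) \le \delta/\sqrt{2\pi}$. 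An identical argument using $f_{t-\delta,\delta}$ gives the lower bound $\PP(Y \le t) \ge \PP(\mcZ \le t) - \delta/\sqrt{2\pi} - C \, d_2(Y,\mcZ)/\delta^2$. Combining,
\begin{equation*}
\left|\PP(Y \le t) - \PP(\mcZ \le t)\right| \lesssim \delta + \frac{d_2(Y,\mcZ)}{\delta^2}.
\end{equation*}

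Finally, optimizing over $\delta$ by setting $\delta = d_2(Y,\mcZ)^{1/3}$ balances the two terms and yields the claimed bound $d_{\mathrm{kol}}(Y,\mcZ) \lesssim d_2(Y,\mcZ)^{1/3}$. (If $d_2(Y,\mcZ) \ge 1$, the bound is trivial since $d_{\mathrm{kol}} \le 1$, so we may assume $\delta \le 1$.) I do not anticipate a serious obstacle here; the only technical point is the explicit construction of the template $\psi$, which is a routine calculus exercise (e.g.\ a cubic spline or a mollified ramp).
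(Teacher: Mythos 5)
Your proposal is correct and follows essentially the same route as the paper: smooth the indicator by a $C^2$ step function supported on an interval of width $\delta$ with $\|\cdot\|_{C_2}\lesssim \delta^{-2}$, apply the $d_2$ bound, use Gaussian anti-concentration, and optimize $\delta\asymp d_2(Y,\mcZ)^{1/3}$. The only difference is cosmetic — the paper writes out an explicit quintic interpolating polynomial where you invoke a generic template $\psi$.
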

	\begin{proof}
		The proof is rather standard and follows, as in \cite[Theorem 3.3]{chen2011normal}, by regularizing the step function. Thus, for a positive small $\delta > 0$, and $t\in \RR$, set
		$$h_{t,\delta}(x) =  \begin{cases} 
			1 & x\leq t \\
			\frac{((t + \delta - x)^3 (6 t^2 + \delta^2 + 3 \delta x + 6 x^2 - 3 t (\delta + 4 x)))}{\delta^5} & t< x < t + \delta \\
			0 & x\geq t + \delta 
		\end{cases}.$$
		The values of $h_{t,\delta}$ in $[t,t+\delta]$ were chosen as the unique minimal degree interpolating polynomial to ensure that $h_{t,\delta}$ is twice continuously differentiable in $\RR$. In particular, a straightforward, but tedious, calculation shows
		$$\|h_{t,\delta}'\|_{\infty} \leq \frac{15}{8\delta} \text{ and } \|h_{t,\delta}''\|_{\infty} \leq \frac{10}{\sqrt{3}\delta^2}.$$
		Thus, for small $\delta$, $\|h_{t,\delta}\|_{C_2} \leq \frac{10}{\sqrt{3}\delta^2}$, and 
		$$\left|\EE\left[h_{t,\delta}(Y)\right] - \EE\left[h_{t,\delta}(\mcZ)\right]\right| \leq \frac{10}{\sqrt{3}\delta^2}d_2(Y,\mcZ).$$
		Since $0<h_{t,\delta}(x) < 1$ for $x \in (t,t+\delta)$ we may conclude,
		\begin{align*}
			\PP\left(Y \leq t\right) -\PP\left(\mcZ \leq t\right) &\leq \EE\left[h_{t,\delta}(Y)\right]-\PP\left(\mcZ \leq t\right) \\
			&= |\EE\left[h_{t,\delta}(Y)\right] -\EE\left[h_{t,\delta}(\mcZ)\right]| + |\EE\left[h_{t,\delta}(\mcZ)\right] -\PP\left(\mcZ \leq t\right)|\\
			&\leq \frac{10}{\sqrt{3}\delta^2}d_2(Y,\mcZ) + \PP\left(t < \mcZ < t + \delta\right)\\
			&\leq \frac{10}{\sqrt{3}\delta^2}d_2(Y,\mcZ) + \frac{\delta}{\sqrt{2\pi}}.
		\end{align*}
		Optimizing over $\delta$, we get $ \PP\left(Y \leq t\right) -\PP\left(\mcZ \leq t\right) \leq 2d_2(X,\mcZ)^{\frac{1}{3}},$ and a similar argument bounds $\PP\left(\mcZ \leq t\right) -\PP\left(Y \leq t\right)$. 
	\end{proof}
\end{document}